\documentclass[letterpaper,11pt,reqno]{amsart}

\usepackage{amsmath,amssymb,amscd,amsthm,amsxtra}
\usepackage[implicit=true]{hyperref}
\usepackage{bbm}
\usepackage{cite}
\setlength{\pdfpagewidth}{8.50in}
\setlength{\pdfpageheight}{11.00in}

\usepackage{tikz}
\usetikzlibrary{matrix} 
\usepackage{enumerate}

\headheight=8pt
\topmargin=0pt
\textheight=624pt
\textwidth=432pt
\oddsidemargin=18pt
\evensidemargin=18pt

\allowdisplaybreaks[2]

\sloppy

\hfuzz  = 0.5cm

\usepackage{color}

\renewcommand{\H}{\mathcal{H}}
\newtheorem{theorem}{Theorem}[section]

\newtheorem{lemma}[theorem]{Lemma}
\newtheorem{proposition}[theorem]{Proposition}
\newtheorem{remark}[theorem]{Remark}

\newtheorem{definition}[theorem]{Definition}
\newtheorem{corollary}[theorem]{Corollary}

\newtheorem*{ack}{Acknowledgments}


\DeclareMathOperator*{\intt}{\int}

%
\newcommand{\I}{\hspace{0.5mm}\text{I}\hspace{0.5mm}}
\newcommand{\II}{\text{I \hspace{-2.8mm} I} }

\newcommand{\noi}{\noindent}
\newcommand{\Z}{\mathbb{Z}}
\newcommand{\R}{\mathbb{R}}
\newcommand{\C}{\mathbb{C}}
\newcommand{\T}{\mathbb{T}}

\newcommand{\1}{\mathbbm{1}}

\newcommand{\D}{\mathcal{D}}
\newcommand{\G}{\mathbf{G}}
\newcommand{\B}{\mathbf{B}}

\let\Im=\undefined\DeclareMathOperator*{\Im}{Im}

\let\P= \undefined
\newcommand{\P}{\mathbf{P}}

\newcommand{\NB}{\mathbb{N}}

\newcommand{\FL}{\mathcal{F}L} 

\renewcommand{\S}{\mathcal{S}}

\def\norm#1{\|#1\|}

\newcommand{\al}{\alpha}
\newcommand{\be}{\beta}
\newcommand{\dl}{\delta}

\newcommand{\X}{\mathbb{X}}
\newcommand{\eps}{\varepsilon}

\newcommand{\ld}{\lambda}

\newcommand{\s}{\sigma}

\newcommand{\ft}{\widehat}
\newcommand{\Ft}{{\mathcal{F}}}

\newcommand{\embeds}{\hookrightarrow}

\newcommand{\nbar}{\overline{n}}

\newcommand{\conj}[1]{\overline{#1}}

\renewcommand{\l}{\ell}

\newcommand{\les}{\lesssim}
\newcommand{\ges}{\gtrsim}

\newcommand{\jb}[1]
{\langle #1 \rangle}

\usepackage{bbm}

\DeclareMathOperator{\sgn}{sgn}

\numberwithin{equation}{section}
\numberwithin{theorem}{section}


\begin{document}
\baselineskip = 14pt

\title[A refined well-posedness result for mKdV in Fourier-Lebesgue spaces]
{A refined well-posedness result for the modified KdV equation in the Fourier-Lebesgue spaces}

\author[A.~Chapouto]
{Andreia Chapouto}

\address{
 Andreia Chapouto\\ Maxwell Institute for Mathematical Sciences
 and 
School of Mathematics\\
The University of Edinburgh\\
and The Maxwell Institute for the Mathematical Sciences\\
James Clerk Maxwell Building\\
The King's Buildings\\
 Peter Guthrie Tait Road\\
Edinburgh\\ 
EH9 3FD\\United Kingdom} 

\email{andreia.chapouto@ed.ac.uk}

\subjclass[2010]{35Q53}

\keywords{modified Korteweg-de Vries equation; global well-posedness; Fourier-Lebesgue spaces}




\begin{abstract}
	
	We study the well-posedness of the complex-valued modified Korteweg-de Vries equation (mKdV) on the circle at low regularity.
	 In our previous work (2019), we introduced the second renormalized mKdV equation, based on the conservation of momentum, which we proposed as the correct model to study the complex-valued mKdV outside of $H^\frac12(\T)$.
	  Here, we employ the method introduced by Deng-Nahmod-Yue (2019) to prove local well-posedness of the second renormalized mKdV equation in the Fourier-Lebesgue spaces $\FL^{s,p}(\T)$ for $s\geq \frac12$ and $1\leq p <\infty$.
	  As a byproduct of this well-posedness result, we show ill-posedness of the complex-valued mKdV without the second renormalization for initial data in these Fourier-Lebesgue spaces with infinite momentum.
	
\end{abstract}

\vspace*{-10mm}

\maketitle

\tableofcontents

\section{Introduction}
\subsection{Modified Korteweg-de Vries equation}
We consider the Cauchy problem for the complex-valued modified Korteweg-de Vries equation (mKdV) on the one-dimensional torus~$\T=\R / (2\pi\Z)$:
\begin{align}
\begin{cases}
\partial_t u + \partial_x^3 u = \pm |u|^2 \partial_x u, \\
u\vert_{t=0} = u_0,
\end{cases} \quad (t,x) \in \R\times \T,
\label{mkdv}
\end{align}
where $u$ is a complex-valued function. The complex-valued mKdV equation \eqref{mkdv} appears as a model for the dynamical evolution of nonlinear lattices, fluid dynamics and plasma physics (see \cite{RRECFM,HWLPE}, for example). This equation is a completely integrable complex-valued generalization of the usual mKdV equation
\begin{equation}
\partial_t u + \partial_x^3 u = \pm u^2 \partial_x u,\label{mkdv_real}
\end{equation}
also referred to as the mKdV equation of Hirota \cite{Hir}.
Indeed, real-valued solutions of \eqref{mkdv} are also solutions of \eqref{mkdv_real}. From the completely integrable structure, it follows that \eqref{mkdv} has an infinite number of conservation laws. In particular, the mass and the momentum play an important role:
\begin{align*}
\text{Mass:} \quad \mu\big(u(t)\big) & = \frac{1}{2\pi} \intt_\T |u(t)|^2 \, dx ,\\
\text{Momentum:} \quad P\big(u(t) \big) & = \frac{1}{2\pi} \Im \intt_\T u(t) \partial_x \conj{u}(t) \, dx.
\end{align*}
Exploiting the conservation of mass $\mu\big(u(t)\big) = \mu(u_0)$ for solutions $u\in C\big(\R;L^2(\T)\big)$ of \eqref{mkdv}, we can consider the first renormalized mKdV equation (mKdV1):
\begin{align}
\partial_t u + \partial_x^3 u = \pm \Big( |u|^2 - \mu(u) \Big)\partial_x u. \label{renorm}
\end{align}
The mKdV1 equation \eqref{renorm} is equivalent to mKdV \eqref{mkdv} in $L^2(\T)$ in the following sense: $u\in C\big(\R;L^2(\T)\big)$ is a solution of \eqref{mkdv} if and only if $\mathcal{G}_1(u)(t,x) := u \big(t,x\mp\mu(u)t \big)$ is a solution of \eqref{renorm}.
In \cite{BO94}, Bourgain showed that \eqref{renorm} is locally well-posed in $H^s(\T)$ for $s\geq \frac12$, using the Fourier restriction norm method. 

In \cite{A}, we showed that the momentum plays an important role in the well-posedness of the complex-valued mKdV equation \eqref{mkdv}. In fact, outside of $H^\frac12(\T)$, the momentum is no longer conserved nor finite, preventing us from making sense of the nonlinearity. There, we established $H^\frac12(\T)$ as the limit for the well-posedness theory of the complex-valued mKdV equation \eqref{mkdv}. In particular, we proved the non-existence of solution for initial data with infinite momentum. Our analysis was based on the Fourier-Lebesgue spaces $\FL^{s,p}(\T)$, defined through the following norm
\begin{equation}
\| u_0\|_{\FL^{s,p}} = \big\| \jb{n}^s \ft{u}_0 \|_{\l^p_n},
\end{equation} 
where $\jb{\cdot} = (1+|\cdot|^2)^\frac12$. More specifically, we are interested in $\FL^{s,p}(\T)$ for $s\geq \frac12$ and $2\leq p <\infty$, since $H^\frac12(\T) \subsetneq \FL^{\frac12,p}(\T)$ for these choices of parameters.

In this paper, we extend the previously mentioned ill-posedness result on the complex-valued mKdV equation \eqref{mkdv} as follows.
\begin{theorem}
	Let $s\geq \frac12$ and $2\leq p <\infty$. Suppose that $u_0\in\FL^{s,p}(\T)$ has infinite momentum in the sense that
	\begin{align}
	|P(\P_{\leq N} u_0 )| \to \infty \text{ as } N\to \infty,\label{ill-posed}
	\end{align}
	where $\P_{\leq N}$ denotes the Dirichlet projection onto the spatial frequencies $\{|n|\leq N\}$. Then, for any $T>0$, there exists no distributional solution $u\in C([-T,T];\FL^{s,p}(\T))$ to the complex-valued mKdV equation \eqref{mkdv} satisfying the following conditions:
	\begin{enumerate}[\normalfont (i)]
		\item $u\vert_{t=0} = u_0$,
		\item The smooth global solutions $\{u_N\}_{N\in\NB}$ of mKdV \eqref{mkdv}, with $u_N\vert_{t=0} = \P_{\leq N}u_0$, satisfy $u_N \to u$ in $C([-T,T];\FL^{s,p}(\T))$.
	\end{enumerate}
	\label{th:nonexistence}
\end{theorem}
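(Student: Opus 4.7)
The approach is to transfer the problem to the second renormalized equation mKdV2, for which our main theorem provides local well-posedness in $\FL^{s,p}(\T)$, and then to derive a contradiction from the oscillation of the momentum-induced phase. Recall from~\cite{A} that smooth solutions $u$ of mKdV are mapped to smooth solutions of mKdV2 by the momentum gauge
\[
\mathcal{G}_2(u)(t,x) = e^{\mp i c\, P(u)\, t}\, u\bigl(t,\, x \mp \mu(u)\, t\bigr),
\]
for an explicit nonzero constant $c$ arising from the momentum resonance in the cubic nonlinearity. Since $\mu$ and $P$ are conserved along the smooth mKdV flow, the scalars $\mu_N := \mu(\P_{\leq N} u_0)$ and $P_N := P(\P_{\leq N} u_0)$ are time-independent, and each $v_N := \mathcal{G}_2(u_N)$ is a smooth global mKdV2 solution with initial data $v_N(0) = \P_{\leq N} u_0$.

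Suppose, for contradiction, that some $u \in C([-T,T];\FL^{s,p}(\T))$ satisfies (i)--(ii). Since $\P_{\leq N} u_0 \to u_0$ in $\FL^{s,p}(\T)$, our main theorem produces a time $T' \in (0,T]$, depending only on $\|u_0\|_{\FL^{s,p}}$, and a unique mKdV2 solution $v \in C([-T',T'];\FL^{s,p})$ with $v(0) = u_0$ such that $v_N \to v$ in $C([-T',T'];\FL^{s,p})$. Moreover, for $s \geq \tfrac12$ and $2 \leq p < \infty$ one has $\FL^{s,p}(\T) \hookrightarrow L^2(\T)$, so $\mu_N \to \mu(u_0)$; together with the continuity of the Galilean shift on $\FL^{s,p}$ in the shift parameter, hypothesis (ii) then gives
\[
u_N\bigl(t,\, \cdot \mp \mu_N t\bigr) \longrightarrow u\bigl(t,\, \cdot \mp \mu(u_0)\, t\bigr) \qquad \text{in } C\bigl([-T',T'];\FL^{s,p}\bigr).
\]

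The contradiction is now extracted at a single Fourier mode. The assumption $|P_N| \to \infty$ forces $u_0 \not\equiv 0$, so we may fix $n_0 \in \Z$ with $\hat{u}_0(n_0) \neq 0$. Projecting the identity $v_N(t,x) = e^{\mp i c P_N t}\, u_N(t, x \mp \mu_N t)$ onto the $n_0$-th frequency yields
\[
\hat{v}_N(t, n_0) = e^{\mp i c P_N t}\, e^{\pm i n_0 \mu_N t}\, \hat{u}_N(t, n_0).
\]
By the two uniform-in-$t$ convergences above, both $\hat{u}_N(t,n_0)$ and $\hat{v}_N(t,n_0)$ converge uniformly on $[-T',T']$ to continuous functions equal to $\hat{u}_0(n_0) \neq 0$ at $t = 0$. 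Choosing $\tau \in (0,T']$ small enough that these limits stay bounded away from zero on $[-\tau,\tau]$, division yields uniform convergence of $e^{\mp i c P_N t}$ on $[-\tau,\tau]$ to a continuous function $h$ with $h(0) = 1$. However, since $|c P_N| \to \infty$, for each large $N$ one can select $t_N \in (0,\tau]$ with $|c P_N|\, t_N = \pi$, so $t_N \to 0$ yet $e^{\mp i c P_N t_N} = -1$, contradicting $h(0) = 1$.

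The main obstacle is organizing the first step cleanly: one must verify that the gauge $\mathcal{G}_2$ takes the smooth mKdV sequence to a sequence of mKdV2 Cauchy problems whose data converge in $\FL^{s,p}$, so that our main theorem applies and produces uniform-in-time convergence on a common interval. Everything after that reduces, through the single-Fourier-coefficient reduction, to the elementary fact that the phase $e^{\mp i c P_N t}$ cannot converge uniformly on any neighbourhood of $t=0$ when $|P_N|\to\infty$.
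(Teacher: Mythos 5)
Your proposal is correct and follows exactly the route the paper has in mind: it states that Theorem~\ref{th:nonexistence} is obtained by combining the local well-posedness of mKdV2 (Theorem~\ref{th:lwp}) with the rapid oscillation of the phase in the gauge $\mathcal{G}_2$ under the infinite-momentum assumption \eqref{ill-posed}, following the Guo--Oh argument, and omits the details. Your write-up fills in precisely that sketch (gauge the smooth solutions $u_N$, use Lipschitz dependence of the mKdV2 flow on a common interval, and derive the contradiction at a single nonvanishing Fourier mode from $e^{\mp iP_Nt_N}=-1$ with $t_N\to 0$), the only point worth noting being the routine identification of the gauged smooth solutions with the solutions produced by Theorem~\ref{th:lwp}, which follows from persistence of regularity and uniqueness for smooth data.
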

This ill-posedness result motivated establishing an alternative model for the complex-valued mKdV equation \eqref{mkdv} at low regularity. Similarly to the first gauge transform $\mathcal{G}_1$ which exploited the conservation of mass, we introduced in \cite{A} a second renormalization of the equation through the following gauge transformation depending on the momentum
\begin{equation}
\mathcal{G}_2(u)(t,x) = e^{\mp iP(u)t} u(t,x). \label{g2}
\end{equation}
Note that if $u \in C\big(\R; H^\frac12(\T)\big)$, the momentum is finite and conserved $P\big(u(t)\big) = P(u_0)$, thus the gauge transformation $\mathcal{G}_2$ is invertible and $u$ solves \eqref{mkdv} if and only if $\mathcal{G}_2 \circ \mathcal{G}_1 ( u )$ solves the second renormalized mKdV equation (mKdV2):
\begin{equation}
\partial_t u + \partial_x^3 u = \pm \bigg(|u|^2 \partial_x u - \mu(u) \partial_x u - i P(u) u \bigg). \label{renorm2}
\end{equation}
The effect of the gauge transformation $\mathcal{G}_2$ is to remove certain resonant frequency interactions in the nonlinearity which are responsible for the ill-posedness result in Theorem~\ref{th:nonexistence}. This supports our choice of gauge transformation $\mathcal{G}_2$. Hence, we propose mKdV2 \eqref{renorm2} as the correct model to study the complex-valued mKdV equation \eqref{mkdv} outside of $H^\frac12(\T)$. Indeed, in \cite{A}, we proved that, unlike mKdV1 \eqref{renorm}, mKdV2 \eqref{renorm2} is locally well-posed in $\FL^{s,p}(\T)$ for $s\geq \frac12$ and $1\leq p <4$.
Our goal in this paper is to improve this well-posedness result without exploiting the complete integrability of the equation.

\begin{theorem} \label{th:lwp}
	The mKdV2 equation \eqref{renorm2} is locally well-posed in $\FL^{s,p}(\T)$ for any $s\geq \frac12$ and $1\leq p <\infty$. Moreover, the data-to-solution map is locally Lipschitz continuous.
\end{theorem}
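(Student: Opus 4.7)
The plan is to carry out a standard contraction mapping argument for the Duhamel formulation of mKdV2 \eqref{renorm2} in a Bourgain-type space adapted to the Fourier-Lebesgue scale. Writing $S(t) = e^{-t\dd_x^3}$, the equation becomes
\begin{equation*}
u(t) = S(t) u_0 \pm \intt_0^t S(t-t') \Nf(u)(t') \, dt', \qquad \Nf(u) = |u|^2 \dx u - \mu(u) \dx u - i P(u) u.
\end{equation*}
Working on $[-T,T]$, I would introduce a space whose ambient norm is of the form
\begin{equation*}
\|u\|_{X^{s,p,b}} = \bnorm{\jb{n}^s \jb{\tau - n^3}^b \ft{u}(\tau,n)}_{L^2_\tau \l^p_n},
\end{equation*}
together with the appropriate time-restricted version. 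The linear and Duhamel estimates in this scale are standard, so the whole problem reduces to a trilinear bound on $\Nf$.

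Expanding $\Nf$ on the Fourier side with $n = n_1 - n_2 + n_3$, the resonance function factorises as
\begin{equation*}
\Phi(\nbar) = n^3 - n_1^3 + n_2^3 - n_3^3 = 3(n-n_1)(n-n_3)(n_1-n_2).
\end{equation*}
The resonant set $\{\Phi = 0\}$ splits into the partial resonances $n = n_1$ (or $n = n_3$), which produce the mass term $\mu(u)\dx u$, and the full resonance $n_1 = n_2$, $n = n_3$, which produces the momentum term $i P(u)u$. Both are precisely the contributions subtracted off in $\Nf$, so the renormalization cancels all the obstructive resonances. For the non-resonant piece, the weight $\jb{\tau - n^3}^{-\frac12 + \eps}$ absorbs one power of $|\Phi|^{-\frac12+}$, and the elementary divisor counting $\#\{(n_1,n_2,n_3) : \, n = n_1-n_2+n_3, \, \Phi = \mu\} \les 1$ for fixed $(n,\mu)$ combined with H\"older in $\l^p_n$ gives the trilinear estimate
\begin{equation*}
\| \Nf(u_1, u_2, u_3)\|_{X^{s,p,-\frac12+\eps}_T} \les T^{\theta} \prod_{j=1}^3 \|u_j\|_{X^{s,p,\frac12+\eps}_T}
\end{equation*}
in the range $1 \leq p < 4$, along the lines of \cite{A}.

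The main obstacle is to close the trilinear estimate in the full range $1 \leq p < \infty$, since for $p \geq 4$ the $L^2_\tau \l^p_n$ structure lacks the $\l^2$-orthogonality that made the earlier Strichartz-type bilinear estimates strong enough to compensate for the derivative in $\dx u$. This is exactly the setting in which the Deng-Nahmod-Yue strategy is designed to work: one performs a simultaneous dyadic decomposition in the modulation sizes $M_j \sim \jb{\tau_j - n_j^3}$ of each input and the output, and handles the two regimes separately. When the output modulation is largest, the weight $\jb{\tau - n^3}^{-\frac12+\eps}$ alone suffices. When an input modulation dominates, that input is measured in a short-time atomic $U^p$/$V^p$-type norm and paired with the remaining factors via refined counting and Schur's test, so that the final $\l^p$ summation still closes with a small power of $T$.

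Once the trilinear estimate is established uniformly in $p$, a Picard iteration in a ball of $X^{s,p,b}_T$ produces a unique fixed point, and the embedding of this space into $C([-T,T]; \FL^{s,p}(\T))$ gives the solution. Applying the same trilinear estimate to the difference of two solutions yields local Lipschitz continuity of the data-to-solution map, which finishes the proof of Theorem~\ref{th:lwp}.
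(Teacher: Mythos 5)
There is a genuine gap, and it sits exactly where the theorem lives: the range $4\leq p<\infty$. Your plan reduces the whole problem to a trilinear estimate in an $X^{s,b}_{p,q}$-type space, to be closed by a dyadic decomposition in the modulations (plus $U^p/V^p$-type norms) inside a single Picard iteration. But that estimate, asserted ``uniformly in $p$,'' is precisely what is unavailable: the paper's starting point is that for the interactions where the derivative falls on the dominant frequency and the other factors are comparable or much smaller (the contributions displayed in \eqref{paracontrolled}), the bound \eqref{intro_nonlinear} cannot be proven in \emph{any} space $X^{\frac12,b}_{p,q}\subset C(\R;\FL^{\frac12,p}(\T))$, regardless of the choice of $b$ and $q$; for the closely related DNLS the analogous estimate is known to \emph{fail} for $p\geq 4$ (Gr\"unrock--Herr), which is what motivated Deng--Nahmod--Yue in the first place. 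The need to spend the highest modulation to recover the derivative is what forces $p<4$, so a finer modulation decomposition within the same contraction framework does not remove the obstruction, and the Deng--Nahmod--Yue method is not a refinement of that kind.

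What the paper actually does is abandon the contraction on the Duhamel formulation and change the unknown: it imposes the structure $u = w + \varphi_T\big[\G_{A,\geq}(w,\conj{u},u)+\G_{A,>}(w,u,\conj{u})+\G_{B,\geq}(w,\conj{w},u)+\G_{B,>}(w,w,\conj{u})\big]$, where $w$ lives in the smoother-in-\emph{time} space $Z_0=X^{\frac12,b_0}_{p,q_0}$ (essentially $b=1-$, $q=\infty-$) and the $\G_*$ are \emph{modified} Duhamel operators obtained by inserting the cutoff $\eta\big(\Phi(\nbar_{123})(t-t')\big)$; their kernels trade the temporal weight $\jb{\tau-n^3}^{-1}$ for the spatial gain $\jb{\Phi(\nbar_{123})}^{-1}$ (Proposition~\ref{prop:KY}), and it is this spatial smoothing that controls the derivative in the problematic regions $\X_A,\X_B$ without using modulations. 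The center $w$ is then determined by a second equation obtained through a \emph{partial} second iteration (substituting the equation for $u$ into the cubic terms), generating cubic, quintic and septic contributions estimated in Sections~\ref{sec:u} and \ref{sec:w}; as a consequence uniqueness is conditional, holding on the sub-manifold defined by this structure, not in a ball of $X^{s,b}_{p,q}$ as your fixed-point argument would give. Without either proving the trilinear estimate you assert (which runs against the known obstruction) or reproducing this system-of-equations construction, the argument does not close for $p\geq 4$. Two smaller inaccuracies: the divisor count for fixed $(n,\mu)$ is $O(|\mu|^{\eps})$, not $O(1)$, and the paper does not use $U^p/V^p$ spaces anywhere.
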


\begin{remark}\rm
	\noi(i) The solutions constructed in Theorem~\ref{th:lwp} satisfy the Duhamel formulation:
	\begin{equation*}
	u(t) = S(t) u_0 \pm \int_0^t S(t-t')  \mathcal{N}(u, \conj{u},u)(t') \, dt',
	\end{equation*}
	for $t\in[-T,T]$, where $S(t)$ denotes the linear propagator, $ \mathcal{N}(u, \conj{u}, u)$ corresponds to the right-hand side of \eqref{renorm2}, for some $T>0$ depending only on $\|u_0\|_{\FL^{\frac12,p}}(\T)$. See Section~\ref{sec:preliminaries} for further details.
	
	\noi (ii) Theorem~\ref{th:lwp} is sharp with respect to the method, since local uniform continuity of the data-to-solution map is known to fail in $\FL^{s,p}(\T)$ for any $s<\frac12$ and $1\leq p <\infty$ \cite{A}. 
	Without imposing uniform continuous dependence on the initial data, we expect it to be possible to lower $s$ in Theorem~\ref{th:lwp}. In a forthcoming work, we intend to pursue the question of local well-posedness of mKdV2 \eqref{renorm2} in $\FL^{s,p}(\T)$ for $s<\frac12$ by combining the method introduced by Deng-Nahmod-Yue \cite{DNY} and the energy method in \cite{NTT} and \cite{MPV}.
	
	\noi (iii) To show Theorem~\ref{th:lwp}, we apply the method introduced by Deng-Nahmod-Yue \cite{DNY}, which is based on constructing solutions $u$ with a particular structure. As a consequence, uniqueness holds conditionally in a sub-manifold of $X^{\frac12,\frac12}_{p,2-}$ (see Definition~\ref{def:xsb}) determined by the structure imposed on $u$.  In Sobolev spaces, unconditional uniqueness holds in $H^s(\T)$ for $s\geq \frac13$ (see \cite{KO, MPV}). It would be of interest to consider the problem of unconditional uniqueness of mKdV2 \eqref{renorm2} in the Fourier-Lebesgue spaces.
\end{remark}

Using the a priori bounds established by Oh-Wang \cite{OH1}, which generalize the result by Killip-Vi\c{s}an-Zhang \cite{KVZ} to the Fourier-Lebesgue setting, we extend the solutions in Theorem~\ref{th:lwp} globally-in-time.
\begin{theorem}\label{th:gwp}
	The mKdV2 equation \eqref{renorm2} is globally well-posed in $\FL^{s,p}(\T)$ for $s\geq \frac12$ and $1\leq p <\infty$.
\end{theorem}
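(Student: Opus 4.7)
The plan is to combine the local well-posedness from Theorem~\ref{th:lwp} with a global-in-time a priori bound on the $\FL^{\frac12,p}$-norm for smooth solutions, and then iterate the local theory on time intervals of uniform length. This is the standard strategy for upgrading local to global well-posedness once both ingredients are available.

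First, I would approximate the initial data by the truncations $u_{0,N} := \P_{\leq N} u_0$, which are smooth and converge to $u_0$ in $\FL^{s,p}(\T)$. Classical smooth theory produces global solutions $u_N$ of the (complex-valued) mKdV equation \eqref{mkdv} with data $u_{0,N}$, and hence of mKdV2 \eqref{renorm2} via the composition $\mathcal{G}_2 \circ \mathcal{G}_1$. The essential input is the a priori bound of Oh--Wang, which, transferred through this gauge (note that $\mathcal{G}_1$ is a spatial translation and $\mathcal{G}_2$ is multiplication by a unimodular scalar, so both preserve the $\FL^{s,p}$-norm), yields
\begin{equation*}
\sup_{t \in \R} \|u_N(t)\|_{\FL^{\frac12,p}} \leq C\bigl(\|u_0\|_{\FL^{\frac12,p}}\bigr)
\end{equation*}
uniformly in $N$.

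With this uniform bound in hand, I would iterate Theorem~\ref{th:lwp}: since its local existence time depends only on the $\FL^{\frac12,p}$-norm of the data at the initial time, the local theory can be applied on the successive intervals $[0,T], [T,2T], \ldots$ with the \emph{same} $T = T(C)$, covering any finite interval $[-T_*, T_*]$. Locally Lipschitz dependence from Theorem~\ref{th:lwp} then guarantees that $\{u_N\}$ is Cauchy in $C([-T_*, T_*]; \FL^{s,p}(\T))$, and the limit $u$ solves mKdV2 and agrees initially with $u_0$. Persistence of the higher regularity $s > \frac12$ is obtained step by step on each interval $[kT, (k+1)T]$, so that $u \in C(\R; \FL^{s,p}(\T))$ globally, and the Lipschitz property passes to the limit as well.

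The only non-routine step is the a priori estimate itself, which is the content of the Oh--Wang generalization of Killip--Vi\c{s}an--Zhang. For the purposes of proving Theorem~\ref{th:gwp} it reduces to verifying that their bound applies to the smooth approximate solutions $u_N$ and that the gauge transformation preserves the $\FL^{\frac12,p}$-norm; both are immediate. The remainder of the argument is the standard iteration/approximation scheme outlined above.
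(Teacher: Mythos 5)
Your proposal is correct and follows essentially the same route as the paper: the paper obtains Theorem~\ref{th:gwp} precisely by combining the local theory of Theorem~\ref{th:lwp} (whose existence time depends only on the $\FL^{\frac12,p}$-norm of the data) with the Oh--Wang a priori bounds, transferred to mKdV2 through the norm-preserving gauges $\mathcal{G}_1$, $\mathcal{G}_2$ and an approximation by smooth solutions. The iteration and limiting details you spell out are exactly the standard argument the paper leaves implicit.
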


For real-valued solutions $u$, the momentum $P(u) \equiv 0$ which implies that $\mathcal{G}_2(u) \equiv u$. Consequently, the previous results on the complex-valued mKdV2 equation \eqref{renorm} in Theorems~\ref{th:lwp} and \ref{th:gwp} also apply to the real-valued setting.
\begin{corollary}\label{cor:lwp_real}
	The real-valued mKdV1 equation \eqref{renorm} is globally well-posed in $\FL^{s, p}(\T)$ for $s\geq \frac12$ and $1\leq p <\infty$.
\end{corollary}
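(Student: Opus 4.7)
The plan is to deduce Corollary~\ref{cor:lwp_real} directly from Theorem~\ref{th:gwp} by exploiting the fact that real-valued functions $u$ satisfy $P(u) = \frac{1}{2\pi}\Im\int_\T u\,\partial_x u\, dx = 0$, so the mKdV2 equation \eqref{renorm2} restricted to real-valued functions coincides identically with the mKdV1 equation \eqref{renorm}. Given real-valued $u_0 \in \FL^{s,p}(\T)$, Theorem~\ref{th:gwp} produces a global solution $u \in C(\R;\FL^{s,p}(\T))$ of \eqref{renorm2} with a locally Lipschitz continuous data-to-solution map. The problem therefore reduces to verifying that $u$ remains real-valued at every time; once this is shown, $u$ automatically solves \eqref{renorm}, and the Lipschitz and globality statements transfer verbatim to give the corollary.

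To prove reality preservation, I would use the conjugation symmetry of \eqref{renorm2}. A short computation shows that if $u$ solves \eqref{renorm2}, then so does $\conj{u}$: one has $|u|^2 = |\conj{u}|^2$, $\mu(\conj{u}) = \mu(u) \in \R$, and $P(\conj{u}) = -P(u)$, where the last identity exactly cancels the sign flip produced by conjugating the $-iP(u)u$ term. Consequently, when $u_0 = \conj{u_0}$, both $u$ and $\conj{u}$ are solutions of \eqref{renorm2} with the same initial data, and it remains to invoke uniqueness.

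The main obstacle — and really the only delicate point — is that Theorem~\ref{th:lwp} provides only conditional uniqueness inside the sub-manifold of $X^{\frac12,\frac12}_{p,2-}$ cut out by the Deng-Nahmod-Yue ansatz. I would handle this by inspecting the contraction argument underlying Theorem~\ref{th:lwp}: the Picard iterates
\[
u^{(k+1)}(t) = S(t)u_0 \pm \int_0^t S(t-t')\,\mathcal{N}\bigl(u^{(k)},\conj{u^{(k)}},u^{(k)}\bigr)(t')\, dt'
\]
are real by induction, since the Airy propagator $S(t)$ has a real convolution kernel and, for real $u^{(k)}$, the nonlinearity reduces to $\pm\bigl((u^{(k)})^2\partial_x u^{(k)} - \mu(u^{(k)})\partial_x u^{(k)}\bigr)$, which is manifestly real. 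Passing to the fixed-point limit in the ambient $X^{s,b}$-type space yields a real-valued local solution; equivalently, one can check that the Deng-Nahmod-Yue decomposition is invariant under $u \mapsto \conj{u}$, so $\conj{u}$ lies in the same sub-manifold and the conditional uniqueness in Theorem~\ref{th:lwp} forces $u = \conj{u}$. Iterating on the intervals produced by the globalization in Theorem~\ref{th:gwp} transfers reality to all times; then $P(u(t)) \equiv 0$, $u$ solves \eqref{renorm}, and Corollary~\ref{cor:lwp_real} follows.
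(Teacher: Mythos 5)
Your core reduction is exactly the paper's argument: for real-valued functions the momentum vanishes, so \eqref{renorm2} coincides with \eqref{renorm} on real data, and Theorems~\ref{th:lwp} and \ref{th:gwp} transfer directly; the paper says precisely this and nothing more, leaving the reality-preservation step implicit. You are right to isolate that step as the only delicate point, but the first mechanism you offer for it does not apply: the solutions of Theorem~\ref{th:lwp} are \emph{not} obtained as limits of Picard iterates of the Duhamel formulation --- the direct contraction on the Duhamel map is exactly what fails for $p\geq 4$, which is the reason the Deng--Nahmod--Yue system \eqref{eq:u}, \eqref{eq:w} is introduced --- so ``passing to the fixed-point limit'' of those iterates is not available and proves nothing about the solution actually constructed.

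Your second mechanism is the right one, but it is not automatic either. On the Fourier side conjugation acts by $(\tau,n)\mapsto(-\tau,-n)$, which flips the sign of $\Phi(\nbar_{123})$; hence $\conj{\G_{*}(u_1,u_2,u_3)}$ is the operator of the same form built from $\tilde\eta(x)=\conj{\eta(-x)}$ rather than $\eta$, and the ansatz is invariant under $u\mapsto\conj{u}$ only after one fixes $\eta$ in \eqref{eta} with real-valued $\ft{\eta}$ (possible, since the Hilbert transform preserves real-valuedness, and $\tilde\eta$ satisfies the same normalization). With such a choice, $(\conj{u},\conj{w})$ solves the same system with data $\conj{u_0}=u_0$, and the uniqueness statements of Propositions~\ref{prop:w} and \ref{prop:u} give $\conj{w}=w$ and then $\conj{u}=u$, so your conclusion goes through. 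Alternatively, one can bypass the construction entirely: $\P_{\leq N}u_0$ is real, the corresponding smooth solutions of \eqref{renorm2} are real (they coincide with the classical real solutions of \eqref{renorm} by uniqueness in the smooth class, since $P\equiv 0$ there), and the locally Lipschitz data-to-solution map of Theorems~\ref{th:lwp}--\ref{th:gwp} passes reality to the limit on each time interval; this is the cleaner route and closer in spirit to how the paper treats such approximation arguments elsewhere (cf.\ Theorem~\ref{th:nonexistence}).
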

\begin{remark}\rm
Corollary~\ref{cor:lwp_real} (restricted to $s\geq \frac12$) extends the result by Kappeler-Molnar to the defocusing case and also to the large data setting. Furthermore, our solutions satisfy the Duhamel formulation.
\end{remark}

The real-valued mKdV equation \eqref{mkdv_real} has garnered more attention than its complex-valued counterpart in the periodic setting. For context, we review some of the known well-posedness results of the former equation. Local well-posedness in $H^s(\T)$ for $s\geq \frac12$ is due to Bourgain \cite{BO94}. The result was shown through the Fourier restriction norm method and it is sharp if one requires Lipschitz continuity of the solution map \cite{BO97,CCT03}. Colliander-Keel-Staffilani-Takaoka-Tao \cite{CKSTT03} showed that these solutions can be extended globally in time, by applying the $I$-method.
By weakening the resonant contribution in the nonlinearity, Takaoka-Tsutsumi \cite{TT04} and Takaoka-Nakanishi-Tsutsumi \cite{NTT} extended the local well-posedness to $H^s(\T)$, $s>\frac13$. The end-point result was shown by Molinet-Pilod-Vento in \cite{MPV}. These solutions are global-in-time due to the a priori bounds by Killip-Vi\c{s}an-Zhang \cite{KVZ}. Exploiting the complete integrability of the equation, Kappeler-Topalov \cite{KT} showed global existence and uniqueness of solutions of the real-valued defocusing mKdV equation (with `$+$' sign) in $H^s(\T)$ for $s\geq 0$. These solutions are the unique limit of smooth solutions and are not known to satisfy the equation in the distributional sense (see \cite{KT,KOY,OH2} for more details). In \cite{Mo}, Molinet showed that these solutions are indeed distributional solutions and proved ill-posedness of the mKdV equation below $L^2(\T)$, in the sense of failure of continuity of the solution map. See also the work of Schippa \cite{Schippa}.

This ill-posedness result, motivated the study of the mKdV equation outside of Sobolev spaces, in particular, in spaces with similar scaling. To clarify, we briefly recall the scaling heuristics. The mKdV equation \eqref{mkdv} on the real line enjoys the following scaling symmetry 
\begin{equation}
u^\ld (t,x) = \ld u(\ld^3 t , \ld x), \label{scaling}
\end{equation}
which induces the scaling critical regularity $s_{\text{crit}} = - \frac12$ in the sense that the $\dot{H}^{-\frac12}(\R)$-norm is invariant under the scaling \eqref{scaling}. In the subcritical regime, in $H^s(\T)$ for $s>-\frac12$, one might expect well-posedness of \eqref{mkdv} from the scaling heuristics. However, Molinet's result shows that there is no hope of covering this regime within the Sobolev scale. An alternative approach is to study \eqref{mkdv} in spaces with similar scaling, namely the Fourier-Lebesgue spaces. In \cite{KM}, Kappeler-Molnar showed local well-posedness of the real-valued defocusing mKdV equation in $\FL^{s,p}(\T)$, for $s \geq 0$, $1 \leq p < \infty$ (see also \cite{N}). Given that $\dot{\FL}^{s,p}(\R)$ scales like $\dot{H}^\s(\R)$ for $\s = s + \frac1p - \frac12$, this result covers the full subcritical regime, although the solutions are not yet known to be distributional solutions. 

In the euclidean setting, the well-posedness of the complex-valued mKdV equation \eqref{mkdv} has been a long standing topic of interest. Kenig-Ponce-Vega \cite{KPV93} showed local well-posedness of \eqref{mkdv} in $H^s(\R)$ for $s\geq \frac14$. This result is sharp with respect to the method, due to the failure of uniform continuity of the data-to-solution map in $H^s(\R)$, $s<\frac14$. In the Fourier-Lebesgue setting, Gr\"unrock-Vega \cite{GV09} proved local well-posedness of \eqref{mkdv} in $\FL^{s,p}(\R)$ for $s> \frac{1}{2p}$ and $1\leq p <\infty$; see also \cite{G04}. These solutions can be extended globally in time due to the a priori bounds by Oh-Wang in \cite{OH1}.
In a recent paper \cite{HGKV}, Harrop-Griffiths-Killip-Vi\c{s}an showed optimal global well-posedness of the complex-valued mKdV equation in $H^s(\R)$ for $s> -\frac12$ by exploiting the complete integrability of the equation, giving a definite answer to the problem of local well-posedness of mKdV \eqref{mkdv} on the real line, in Sobolev spaces. 

We conclude this section by stating some further remarks.
\begin{remark}\rm
	\noi (i) The ill-posedness result in Theorem~\ref{th:nonexistence} follows an argument by Guo-Oh \cite{GO}. The proof combines the local well-posedness of mKdV2 \eqref{renorm2} in Theorem~\ref{th:lwp} and the rapid oscillation of the phase in the gauge transformation $\mathcal{G}_2$ \eqref{g2}, due to the assumption on the momentum \eqref{ill-posed}. Thus, since Theorem~\ref{th:nonexistence} follows from \cite{A,GO}, we omit the details.
	
	\noi (ii) The scaling heuristics described above can be transported to the Fourier-Lebesgue setting. The critical regularity is given by $s_{\text{crit}} (p)= - \frac{1}{p}$. We can also compare the scaling of the two families of spaces to conclude that $\dot{\FL}^{s,p}(\R)$ scales like $\dot{H}^\s(\R)$ for $\s = s - \frac12 + \frac1p$. From these heuristics, we see that the results in Theorem~\ref{th:lwp} and Corollary~\ref{cor:lwp_real} are at the scale of $L^2(\T)$.

\end{remark}

\begin{remark}\rm
	In \cite{A}, we showed that it is possible to recover solutions of mKdV \eqref{mkdv} from solutions of mKdV2 \eqref{renorm2} outside of $H^\frac12(\T)$ by imposing the following notion of finite momentum at low regularity.
	\begin{definition}
		Suppose that 
		$$P(\P_{\leq N} f) \text{ converges as } N\to\infty.$$ 
		Then, we say that $f$ has finite momentum and denote the limit by $P(f)$.
		\label{def:P}
	\end{definition}
	\noi By imposing this notion of finite momentum to initial data in $\FL^{s,p}(\T)$ for $s\geq \frac12$ and $1\leq p <3$, we showed that the corresponding solutions of mKdV2 \eqref{renorm2} have finite and conserved momentum. The restriction follows from an energy estimate, which we do not know how to improve at the moment. As a consequence of conservation of momentum, we proved existence of distributional solutions of mKdV \eqref{mkdv} by using the gauge transformation $\mathcal{G}_2$ and a limiting argument. We expect a similar result to hold for the full range of well-posedness, $s\geq \frac12$ and $1\leq p<\infty$, if the momentum of the initial data is finite. Since the main focus of this paper is on improving the previous well-posedness result of mKdV2 \eqref{renorm2}, we will not discuss further how to recover solutions of the complex-valued mKdV equation \eqref{mkdv} from those of mKdV2 \eqref{renorm2}.
	
\end{remark}

\subsection{Outline of the strategy}
In our previous work on the complex-valued mKdV equation \cite{A}, we proved local well-posedness of mKdV2 \eqref{renorm2} in $\FL^{s,p}(\T)$ for $s\geq \frac12$ and $1\leq p <4$ by using the Fourier restriction norm method. The solutions are constructed through a contraction mapping argument on the Duhamel formulation
\begin{equation}
u(t) = S(t) u_0 \pm \int_0^t S(t-t')  \mathcal{N}(u, \conj{u}, u) (t') \, dt' =: S(t)u_0 \pm D  \mathcal{N}(u, \conj{u}, u)(t),\label{intro_duhamel}
\end{equation}
where $S(t)$ denotes the linear propagator, $ \mathcal{NR}(u)$ the nonlinearity of the mKdV2 equation and $D$ the Duhamel operator. In particular, we look for solutions in the $X^{s,b}$ spaces adapted to the Fourier-Lebesgue setting defined by the following norm
\begin{align}
\|u\|_{X^{s,b}_{p,q}} = \big\| \jb{n}^s \jb{\tau-n^3}^b \ft{u}(\tau,n) \big\|_{\l^p_n L^q_\tau}, \label{intro_xsb}
\end{align}
see Definition~\ref{xsb} for more details. A key ingredient in proving local well-posedness is a nonlinear estimate of the form 
\begin{equation}
\big\| D  \mathcal{NR}(u_1,u_2,u_3) \big\|_{X^{s,b}_{p,q}  } \les \prod_{j=1}^3 \|u_j\|_{X^{s,b}_{p,q}}, \label{intro_nonlinear}
\end{equation}
for $b=\frac12$ and $q=2$.
 The main difficulty resides in controlling the derivative in the nonlinearity. Since the Duhamel operator has smoothing in time but not space, we want to exploit the multilinear dispersion by using the modulations, i.e., the weights $\jb{\tau - n^3}$ in the norm \eqref{intro_xsb}. The need to use the highest modulation to help control the derivative imposes the restriction $1\leq p <4$ and we do not know how to overcome it within this framework.

In this paper, we apply the method introduced by Deng-Nahmod-Yue \cite{DNY} to the mKdV2 equation \eqref{renorm2} extending our previous local well-posedness result to $\FL^{s,p}(\T)$ for $s\geq \frac12$ and $4\leq p <\infty$. Instead of running a contraction mapping argument on the Duhamel formulation \eqref{intro_duhamel}, we want to establish a system of equations which imposes structure on the solution $u$ but guarantees that it still satisfies the Duhamel formulation \eqref{intro_duhamel}. The structure is appropriately chosen to avoid the nonlinear estimates \eqref{intro_nonlinear} that we could not prove in our previous work. In particular, we want to avoid using the modulations to gain smoothing in space until strictly necessary.

The method was introduced to improve the local well-posedness results of Gr\"unrock-Herr \cite{GH} for the derivative nonlinear Schr\"odinger equation (DNLS). Here, they showed DNLS is locally well-posed in $\FL^{s,p}(\T)$ for $s\geq \frac12$ and $1\leq p <4$, using the Fourier restriction norm method on the gauged equation introduced by Herr \cite{Herr}. Moreover, they proved that the main nonlinear estimate fails for $p\geq 4$, preventing the improvement of the well-posedness result within this framework. The new method by Deng-Nahmod-Yue is designed to bypass the problematic cases responsible for the failure of the nonlinear estimate. To that end, motivated by the probabilistic setting, one looks for solutions which are centered around a suitable object.

The idea of looking for solutions with a particular structure is not new in the context of probabilistic PDEs (with random initial data or stochastic forcing). Centering the solution around a suitably chosen object was seen in the works of Bourgain \cite{BO96}, Da Prato-Debussche \cite{DPD02} and Gubinelli-Imkeller-Perkowski \cite{GIP}, for example. In these works, the randomness of the centers introduces smoothing in space on the remainder nonlinear pieces, making it possible to improve purely deterministic results.
The lack of randomness in our setting leads us to considering a moving `center' $w$ and the solution can be thought of as parameterized by $w$. The equation for $w$ is chosen so that $u$ with the imposed structure solves the Duhamel formulation \eqref{intro_duhamel}. Unlike the previously discussed works, $w$ is not smoother in space but in \emph{time}.

Choosing the correct structure for $u$ is the main difficulty in the method by Deng-Nahmod-Yue and it is strongly motivated by the difficulties in the nonlinear estimate \eqref{intro_nonlinear}. We want to find an appropriate functional $F(u,w)$ such that we can solve the following system
\begin{equation}\label{intro_system}
\begin{cases}
u = w + F(u,w), \\
w = S(t)u_0 \pm D \mathcal{N}(u) - F(u,w).
\end{cases}
\end{equation}
There are three main points in establishing and solving the system \eqref{intro_system}: (i) choosing the frequency regions of the nonlinear terms in $F(u,w)$; (ii) modifying the Duhamel operator to induce smoothing in space; (iii) using second iteration to solve the equation for $w$.

We start by choosing the spaces for $u$ and $w$. In the following discussion, we will focus on the endpoint $s=\frac12$. We want to construct solutions $u\in X^{\frac12,\frac12}_{p,2-} \embeds C(\R; \FL^{\frac12,p}(\T))$. In order to choose the space for $w$, we look at the nonlinear estimate \eqref{intro_nonlinear}. In certain regions of frequency space, the estimate \eqref{intro_nonlinear} holds for any fixed $2\leq p<\infty$ and some $b=1-$ and $q=\infty-$ (see Remark~\ref{rm:w} for more details). This motivates the choice of $X^{\frac12, 1-}_{p,\infty-} \subset X^{\frac12, \frac12}_{p,2-}$ as the space where $w$ lives.

Now, we focus on (i). Our first guess for $F(u,w)$ corresponds to the cubic terms in the Duhamel formulation for which we cannot show the trilinear estimate \eqref{intro_nonlinear} in any $X^{\frac12,b}_{p,q}\subset C(\R; \FL^{\frac12,p}(\T))$ space, regardless of the choice of $b$ and $q$. These contributions should appear in the equation for $u$, in order to be estimated in a weaker norm $X^{\frac12, \frac12}_{p,2-}$. In particular, the contributions essentially look like the following
\begin{equation}\label{paracontrolled}
\begin{aligned}
&D \Big( \sum_N \P_{ N} \partial_x w \cdot  \P_{\ll N} \conj{u} \cdot \P_{\ll N} u \Big), \\
&D \Big(\sum_N \P_{ N} \partial_x w \cdot \P_N \conj{w} \cdot \P_{\ll N} u \Big), \\
&D \Big( \sum_N \P_{N} \partial_x w \cdot \P_N w \cdot  \P_{\ll N} \conj{u} \Big),
\end{aligned}
\end{equation}
where $\P_N$ and $\P_{\ll N}$ denote the Dirichlet projections onto the spatial frequencies $\{|n|\sim N\}$ and $\{|n| \ll N\}$, respectively. These terms can roughly be seen as `paracontrolled' by $w$ (see \cite{GIP} for details on paracontrolled distributions). 

Unfortunately, this first ansatz for $F(u,w)$ is not sufficient. When estimating the above contributions in $X^{\frac12,\frac12}_{p,2-}$, the lack of smoothing in space of the Duhamel operator $D$ forces us to use the modulations. This leads us to (ii) and to the introduction of a modified Duhamel operator which not only has smoothing in time but also in space. The modification is introduced through a convolution in time with a smooth time cutoff parameterized by the resonance relation (see Section~\ref{sec:kernel} for more details). Choosing $F(u,w)$ as the contributions in \eqref{paracontrolled} with $D$ replaced by the modified Duhamel operator establishes an equation for $u$ which can be solved through a fixed point argument, given a fixed $w\in X^{\frac12,1-}_{p,\infty-}$.

From the first equation, we obtain a function $u$ parameterized by $w$ which is not yet a solution to the mKdV2 equation. This will only follow after we have found the correct center $w\in X^{\frac12, 1-}_{p,\infty-}$. The crucial idea behind obtaining $w$ is to modify the second equation in \eqref{intro_system} by substituting $u=u[w]$ by the first equation when relevant, using a \emph{partial} second iteration (iii). This strategy resembles the second iteration method used by Bourgain \cite{BO97}, Oh \cite{OH09} and Richards \cite{Richards16}, for example. In these works, by iterating the Duhamel formulation, it becomes possible to estimate problematic regions in frequency space at the cost of increasing the multilinearity of the analysis. In our case, we have a partial second iteration, as we are exploiting the structure imposed to $u$, not the full Duhamel formulation. This will lead to new cubic and quintic terms for which we can establish estimates (see Sections~\ref{sec:equations} and \ref{sec:w}).

Recall that our underlying motivation to establish the system of equations \eqref{intro_system} is to avoid using the modulations until strictly necessary. On the one hand, the nonlinear terms in the equation for $u$ have smoothing in space induced by the modified Duhamel operator, which is sufficient to control the derivative. On the other hand, in order to solve the equation for $w$, we can use second iteration which introduces smoother in time terms or increases the multilinearity of the contributions. In both cases, the modification allows us to show the estimates for any $2\leq p <\infty$, even when the modulations need to be used.

\subsection{Outline of the paper} In Section \ref{sec:preliminaries} we introduce the relevant function spaces and some auxiliary results. In Section \ref{sec:kernel} we introduce the modified Duhamel operator and show relevant kernel estimates. In Section~\ref{sec:equations} we establish equation for $u$. Moreover, we explain how to use second iteration to obtain the equation for $w$, which is included in detail in Appendix~\ref{app:w}. The relevant estimates for the nonlinear terms in the equation for $u$ are shown in Section~\ref{sec:u}. Lastly, Section \ref{sec:w} is dedicated to solving the equation for $w$.

\section{Preliminaries}\label{sec:preliminaries}
\subsection{Notations and function spaces}
We start by introducing some useful notations. We will use $\varphi: \R \to \R$ to denote a smooth time cutoff function, equal to 1 on $[-1,1]$ and 0 outside of $[-2,2]$ and $\varphi_T (t) := \varphi(T^{-1}t)$, for $0\leq T \leq 1$. 
Lastly, let  $A\les B$ denote an estimate of the form $A\leq CB$ for some constant $C>0$. Similarly, $A\sim B$ will denote $A\les B$ and $B\les A$, while $A\ll B$ will denote $A\leq \eps B$, for some small constant $ 0<\eps\ll 1$.
The notations $a+$ and $a-$ represent $a+\eps$ and $a-\eps$ for arbitrarily small $\eps>0$, respectively.
 
Our conventions for the Fourier transform are as follows. The Fourier transform of $u: \R\times\T \to \C$ with respect to the space variable is given by
$$\Ft_x u(t,n) = \frac{1}{2\pi} \int_\T u(t,x) e^{- inx} \ dx.$$
The Fourier transform of $u$ with respect to the time variable is given by
$$\Ft_t u(\tau,x) = \frac{1}{2\pi}\int_\R u(t,x) e^{-it\tau} \ dt.$$
The space-time Fourier transform is denoted by $\Ft = \Ft_t \Ft_x$. We will also use $\ft{u}$ to denote $\Ft_x u$, $\Ft_t u$ and $\Ft_{t,x} u$, but it will become clear which one it refers to from context, namely from the use of the spatial and time Fourier variables $n$ and $\tau$, respectively.

Now, we focus on the relevant spaces of functions. Let $\S (\R\times\T)$  denote the space of functions $u:\R\times\R\to\C$, with $u\in C^\infty(\R\times\T)$ which satisfy
\begin{align*}
u(t,x+1) = u(t,x), \quad  \sup_{(t,x) \in \R\times\T} | t^\al \partial_t^\be \partial_x^\gamma u(t,x)| < \infty, \quad \al,\be,\gamma\in\mathbb{Z}_{\geq 0}.
\end{align*}
In \cite{BO2}, Bourgain introduced the $X^{s,b}$-spaces defined by the norm
\begin{align}
\|u\|_{X^{s,b}} = \big\| \jb{n}^s \jb{\tau - n^3}^b \ft{u}(\tau,n) \big\|_{\l^2_n L^2_\tau}.
\label{xsb}
\end{align}
In the following, we define the $X^{s,b}$-spaces adapted to the Fourier-Lebesgue setting (see Gr\"{u}nrock-Herr \cite{GH}).

\begin{definition}\label{def:xsb}
	Let $s,b\in\R$, $1\leq p,q\leq \infty$. The space $X^{s,b}_{p,q}(\R\times\T)$, abbreviated $X^{s,b}_{p,q}$, is defined as the completion of $\S(\R\times\T)$ with respect to the norm
	\begin{align*}
	\norm{u}_{X^{s,b}_{p,q}} = \big\| \jb{n}^s \jb{\tau - n^3}^b \ft{u}(\tau, n) \big\|_{\l^p_n L^q_\tau}.
	\end{align*}
	When $p=q=2$, the $X^{s,b}_{p,q}$-spaces defined above reduce to the standard $X^{s,b}$-spaces defined in \eqref{xsb}.
\end{definition}

Before proceeding, we introduce the relevant spaces of functions and associated parameters. Let $0< \dl \ll 1$ a small parameter to be chosen later, depending on $2< p <\infty$. We introduce the following parameters
\begin{align*}
b_0 &= 1 - 2 \dl, & b_1 &= 1 - \dl, \\
q_0 &= \frac{1}{4\dl}, & q_1 &= \frac{1}{4.5\dl}, \\
\frac{1}{r_0} &= \frac12 + \dl, & \frac{1}{r_1} &= \frac12 + 2\dl ,  & \frac{1}{r_2} &= \frac12 + 3\dl.
\end{align*}
Note that $b_0 < b_1$, $q_1 < q_0$ and $r_2 < r_1 < r_0$. 

We will focus on showing the result for the endpoint $s=\frac12$, see Remark~\ref{rm:persistence} for more details. Consequently, we will conduct our analysis in the following $X^{s,b}_{p,q}$ spaces:
\begin{align*}
Y_0 &= X^{\frac12, \frac12}_{p, r_0} (\R\times \T),  \\
Y_1 &= X^{\frac12, \frac12}_{p, r_1} (\R\times\T), \\
Z_0 & = X^{\frac12, b_0}_{p, q_0} (\R\times\T),  \\
Z_1 & = X^{\frac12, b_1}_{p,q_0} (\R\times\T).
\end{align*}
Recall the following embedding: for any $1\leq p <\infty$, 
\begin{align*}
X^{s,b}_{p,q} (\R\times \T) &\embeds C (\R; \FL^{s,p}(\T)) \quad \text{for } b>\frac{1}{q'} = 1 - \frac1q.
\end{align*}
In particular, it holds that $Z_0 \subset Y_0 \subset C(\R; \FL^{\frac12,p}(\T))$.

\subsection{Nonlinearity and notion of solution}
The nonlinearity of \eqref{renorm2} has the following spatial Fourier transform
\begin{align}
 \sum_{\substack{n=n_1+n_2+n_3,\\ \Phi(\nbar_{123}) \neq 0}} in_1 \ft{u}(n_1) \ft{\conj{u}}(n_2) \ft{u} (n_3) - in |\ft{u}(n)|^2 \ft{u}(n), \label{nonlinearity1}
\end{align}
where $\nbar_{123} = (n_1,n_2,n_3)$ and $\Phi$ denotes the resonance relation 
$$\Phi(\nbar_{123}) = n^3 - n_1^3 - n_2^3 - n_3^3 = 3(n_1+n_2)(n_1+n_3)(n_2+n_3),$$
where the factorization holds if $n=n_1+n_2+n_3$.
Consider the following trilinear operators
\begin{align}
\Ft_x\big( \mathcal{NR}_{\geq}(u_1,u_2,u_3) \big)(n) &= \sum_{\substack{n=n_1+n_2+n_3,\\ \Phi(\nbar_{123}) \neq 0, \\ |n_2| \geq |n_3|}}  in_1 \ft{u}_1(n_1) \ft{u}_2 (n_2) \ft{u}_3(n_3), \label{nonresonant1}\\
\Ft_x\big( \mathcal{NR}_{>}(u_1,u_2,u_3) \big)(n) &= \sum_{\substack{n=n_1+n_2+n_3,\\ \Phi(\nbar_{123}) \neq 0, \\ |n_2| > |n_3|}}  in_1 \ft{u}_1(n_1) \ft{u}_2 (n_2) \ft{u}_3(n_3), \label{nonresonant2}\\
\Ft_x \big( \mathcal{R}(u_1,u_2,u_3) \big) (n) &= - in \ft{u}_1 (n) \conj{\ft{u}}_2 ( n) \ft{u}_3 (n). \nonumber
\end{align}
Consequently, we can decompose the nonlinearity \eqref{nonlinearity1} into non-resonant and resonant contributions
$$ \mathcal{N}(u, \conj{u}, u) = \mathcal{NR}_{\geq}(u, \conj{u}, u) +  \mathcal{NR}_>(u,u,\conj{u})+ \mathcal{R}(u,u,u).$$
Note that if $n_j$ is the spatial frequency corresponding to $u_j$, $j=1,2,3$ in \eqref{nonresonant1} and \eqref{nonresonant2}, then $|n_2| \geq |n_3|$. Consequently, we want to consider the following subregions:
\begin{align*}
\mathbb{X}_{A}(n) & = \big\{ (n_1,n_2,n_3) \in \Z^3:  \ \Phi(\nbar_{123}) \neq 0, \ |n_2| \ll |n_1| \big\}, \\
\mathbb{X}_{B}(n) & = \big\{ (n_1,n_2,n_3)\in \Z^3: \ \Phi(\nbar_{123}) \neq 0, \ |n_3| \ll \min(|n|, |n_1|) \leq \max(|n|, |n_1|) \sim |n_2| \big\}, \\
\mathbb{X}_{C}(n) & =  \big\{ (n_1,n_2,n_3) \in \Z^3: \ \Phi(\nbar_{123}) \neq 0,  \  |n| \les |n_3| \ll |n_1| \big\}, \\
\mathbb{X}_{D}(n) & = \big\{ (n_1,n_2,n_3) \in \Z^3: \ \Phi(\nbar_{123}) \neq 0,  \ |n_1| \les |n_3| \big\}.
\end{align*}
For $*\in\{A, B, C, D\}$, let $ \mathcal{NR}_{*, \geq},  \mathcal{NR}_{*, >}$ denote the operators defined in \eqref{nonresonant1} and \eqref{nonresonant2}, respectively, with sums localized to $\X_*(n)$.
We can write the non-resonant contributions of the nonlinearity as $$ \mathcal{NR}_{\geq} =  \mathcal{NR}_{A, \geq} +  \mathcal{NR}_{B, \geq} +  \mathcal{NR}_{C, \geq} +  \mathcal{NR}_{D, \geq}$$ and equivalently for $ \mathcal{NR}_{>}$.
We also introduce the following notation, which will be useful in Sections~\ref{sec:u} and \ref{sec:w}:
\begin{equation}
\X_*^\mu(n) := \big\{ \nbar_{123} \in \X_*(n): \ \Phi(\nbar_{123} ) = \mu \big\}. \label{localizedX}
\end{equation}

The following lemma clarifies the relation between the frequencies in the subregions introduced.
\begin{lemma}
	If $|n_2| \geq |n_3|$, we have the following properties for the sets $\X_*(n)$, where $*\in\{A, B, C, D\}$:
	
	{\rm(1)} $(n_1, n_2, n_3)\in\X_A(n) \implies |n_3| \leq |n_2| \ll |n_1|\sim |n| $;
	
	{\rm(2)} $(n_1, n_2, n_3)\in\X_B(n) \implies |n_3| \ll |n| \les |n_1| \sim |n_2| $ or $|n_3| \ll |n_1| \ll |n| \sim |n_2|$;
		
	{\rm(3)} $(n_1, n_2, n_3)\in\X_C(n) \implies  |n| \les |n_3| \ll |n_2| \sim |n_1| $;
	
	{\rm(4)} $(n_1, n_2, n_3)\in\X_D(n) \implies|n_1| \les |n_3| \leq |n_2|  $;  
	
	{\rm(5)} $(n_1, n_2, n_3)\in\X_A(n) \cup \X_B(n) \cup \X_C(n) \implies |\Phi(\nbar_{123})| \ges \max(|n_1|, |n_2|)^2$;
	
	{\rm(6)} $(n_1, n_2, n_3) \in \X_D(n) \implies |\Phi(\nbar_{123})| \ges |n_2|$ and $\jb{n}^\frac12 |n_1| \les (\jb{n_1}\jb{n_2}\jb{n_3})^\frac12$.

\end{lemma}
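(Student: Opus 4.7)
The plan is to reduce everything to the triangle inequality applied to $n = n_1+n_2+n_3$ together with the factorization
\[
\Phi(\nbar_{123}) = 3(n_1+n_2)(n_1+n_3)(n_2+n_3),
\]
valid when $n=n_1+n_2+n_3$. Throughout, the standing hypothesis is $|n_2|\geq |n_3|$, and nonresonance $\Phi\neq 0$ forces each of the three factors $n_i+n_j$ to be a nonzero integer, hence $|n_i+n_j|\geq 1$.

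For the frequency comparisons (1)--(4) I would just unfold the definitions and push through $n=n_1+n_2+n_3$. In (1), $|n_2|,|n_3|\ll|n_1|$ immediately gives $|n|\sim|n_1|$. In (2), split according to whether $\min(|n|,|n_1|)$ equals $|n|$ or $|n_1|$: the first case yields $|n_3|\ll|n|\les|n_1|\sim|n_2|$ directly from the definition, while the second case $|n_1|<|n|$ together with $|n|\sim|n_2|$ and $n=n_1+n_2+n_3$ forces $|n_1|\ll|n|\sim|n_2|$, since $|n_1|\sim|n_2|\sim|n|$ would require $n_1+n_2$ to be $\sim n$, which combined with $|n_3|\ll|n_1|$ still fits the conclusion (so the two cases in (2) are simply the dichotomy $|n_1|\ges|n|$ versus $|n_1|\ll|n|$). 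Part (3) follows because $|n|\les|n_3|\ll|n_1|$ and $n=n_1+n_2+n_3$ force $|n_2|\sim|n_1|$, and (4) is immediate from $|n_1|\les|n_3|\leq|n_2|$.

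For the modulation lower bound (5), I would show that in each of $\X_A,\X_B,\X_C$ at least two of the three factors of $\Phi$ have size comparable to $M:=\max(|n_1|,|n_2|)$, while the third is $\geq 1$ by nonresonance. In $\X_A$, $|n_2|,|n_3|\ll|n_1|$ gives $|n_1+n_2|\sim|n_1+n_3|\sim|n_1|=M$. In $\X_C$, $|n_3|\ll|n_2|\sim|n_1|$ gives $|n_1+n_3|\sim|n_1|$ and $|n_2+n_3|\sim|n_2|$, both $\sim M$. In $\X_B$, treat the two sub-cases from (2) separately: in the first, $|n_3|\ll|n|\les|n_1|\sim|n_2|$ yields $|n_1+n_3|\sim|n_1|$ and $|n_2+n_3|\sim|n_2|$; in the second, $|n_1+n_2|=|n-n_3|\sim|n|\sim|n_2|$ and $|n_2+n_3|\sim|n_2|$. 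In all cases $|\Phi|\ges M^2$.

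The main obstacle is (6), because in $\X_D$ one has $|n_1|\les|n_3|\leq|n_2|$ and individually each of $n_1+n_2$, $n_1+n_3$, $n_2+n_3$ can be small due to cancellation. The trick I would use is the algebraic identity
\[
2n_2 = (n_1+n_2) - (n_1+n_3) + (n_2+n_3),
\]
which by the triangle inequality forces
\[
\max\bigl(|n_1+n_2|,|n_1+n_3|,|n_2+n_3|\bigr) \geq \tfrac{|n_2|}{3};
\]
combined with the fact that the remaining two factors are nonzero integers and hence $\geq 1$, this gives $|\Phi|\ges|n_2|$. The auxiliary inequality $\jb{n}^{1/2}|n_1|\les (\jb{n_1}\jb{n_2}\jb{n_3})^{1/2}$ then follows from $|n|\les|n_2|$ (triangle inequality and $|n_1|,|n_3|\les|n_2|$) and $|n_1|\les|n_3|$, after squaring: the claim reduces to $\jb{n}\jb{n_1}\les \jb{n_2}\jb{n_3}$, which is immediate.
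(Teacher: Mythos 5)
Your proposal is correct. The paper in fact states this lemma without proof, treating it as a routine consequence of the definitions of $\X_A,\dots,\X_D$ and the factorization $\Phi(\nbar_{123})=3(n_1+n_2)(n_1+n_3)(n_2+n_3)$, and your argument is exactly the intended elementary check: (1)--(4) by the triangle inequality on $n=n_1+n_2+n_3$, (5) by noting that in each of $\X_A,\X_B,\X_C$ two factors of $\Phi$ are comparable to $\max(|n_1|,|n_2|)$ while the third is a nonzero integer, and (6) via the identity $2n_2=(n_1+n_2)-(n_1+n_3)+(n_2+n_3)$, which cleanly forces one factor to be $\ges |n_2|$, plus the observation $\jb{n}\jb{n_1}\les\jb{n_2}\jb{n_3}$ in $\X_D$. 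One phrasing slip, which does not affect correctness: in (2) the case $|n_1|<|n|$ does not by itself force $|n_1|\ll|n|$ (take $n_1=n_2=N$, $n_3=1$); the correct case split is the one you settle on at the end, namely $|n_1|\ges|n|$ (giving the first alternative, since then $\max(|n|,|n_1|)\sim|n_1|\sim|n_2|$) versus $|n_1|\ll|n|$ (giving the second), and with that dichotomy the verification goes through.
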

\begin{remark}\rm
Note that the nearly-resonant case when $|n_1|\sim|n_2|\sim|n_3|$ and $|\Phi(\nbar_{123})|\ges \max(|n_1|, |n_2|, |n_3|)$ is included in $\X_D(n)$. There are other frequency interactions allowed in this region which are fully non-resonant, i.e., $|\Phi(\nbar_{123})| \ges \max(|n_1| , |n_2|, |n_3|)^2$ holds. However, due to the condition in $(6)$, the resonance relation will not play a crucial role when estimating this contribution.
\end{remark}

Lastly, we recall our notion of solution to \eqref{renorm2}. 
Define the linear propagator $S(t)$ as follows
\begin{align*}
\Ft_x \big( S(t) F(t,x) \big) (n) = e^{itn^3} \ft{F}(t,x).
\end{align*}
Also, define the Duhamel operator, $D$, and its truncated version, $\D$:
\begin{align*}
D F(t,x) & = \int_0^t S(t-t') F(t',x) \ dt' ,\\
\D F(t,x) & = \varphi(t) \cdot D \big( \varphi(t') \cdot F(t',x) \big) (t) = \varphi(t) \int_0^t S(t-t') \varphi (t') F(t',x) \ dt'.
\end{align*}
We say that $u\in C(\R; \FL^{\frac12,p}(\T))$ is a solution of \eqref{renorm2} with initial data $u_0\in\FL^{\frac12, p}(\T)$ if it satisfies the following integral equation, the Duhamel formulation\footnote{In the following, we choose to take the `$+$' sign in front of the nonlinearity in \eqref{renorm2} for simplicity, as the sign will not play a role in the analysis.}
\begin{align}
u(t) = S(t) u_0 + D  \mathcal{NR}(u, \conj{u}, u) (t) + D \mathcal{R}(u,u,u)(t). \label{duhamel}
\end{align}
Since we are only concerned with local well-posedness, let $0<  T \leq 1$ and consider the truncated Duhamel formulation:
\begin{align}
u(t) = \varphi \cdot S(t) u_0 + \varphi_T \cdot \D  \mathcal{NR} (u, \conj{u}, u) (t) + \D \mathcal{R}(u,u,u)(t). \label{duhamel_truncated}
\end{align}
Since solving \eqref{duhamel_truncated} for $t\in\R$ is equivalent to solving \eqref{duhamel} for $t\in[-T,T]$, we will focus on the truncated equation.

\subsection{Auxiliary results}\
The following proposition allows us to gain a small power of the time of existence $T$, needed to close the contraction mapping argument.
\begin{lemma}\label{lm:T}
	Suppose that $F$ is a smooth function such that $F(0)=0$. Then, we have the following estimates
	\begin{align*}
	\| \varphi_T\cdot F \|_{Y_0} &\les T^\theta \|F\|_{Y_1}, \\
	\|\varphi_T \cdot F \|_{Z_0} & \les T^\theta \|F\|_{Z_1}, 
	\end{align*}
	for any $0<\theta\leq \frac{\dl}{2}$ and $0<T\leq 1$.
\end{lemma}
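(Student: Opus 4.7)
The plan is a standard time-localization argument for Bourgain-type spaces, adapted to the Fourier-Lebesgue setting. The mechanism of the estimate is that the cutoff $\varphi_T$ provides a small parameter $T^\theta$ whenever we either shrink the modulation exponent (here $b_0 < b_1$ in the $Z$-estimate) or relax the $L^q_\tau$-integrability (here $r_0 > r_1$ in the $Y$-estimate), provided that $F$ vanishes suitably at $t = 0$ to absorb the endpoint behavior at $b = \frac12$.

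I would pass to the Fourier transform in time and write multiplication by $\varphi_T$ as convolution with $\hat\varphi_T(\tau) = T\hat\varphi(T\tau)$. For both estimates, the modulation weight $\langle \tau - n^3\rangle^b$ is handled by the triangle-type split
\begin{equation*}
\langle \tau - n^3\rangle^b \lesssim \langle \tau - \sigma\rangle^b + \langle \sigma - n^3\rangle^b,
\end{equation*}
which decomposes $\widehat{\varphi_T F}(\tau, n) = \hat\varphi_T \ast_\tau \hat F(\tau, n)$ into two convolutions: one with kernel $\langle \rho\rangle^b |\hat\varphi_T(\rho)|$, and one in which the weight is transferred onto $\hat F$. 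Young's inequality in $\tau$ then reduces each piece to a weighted $L^m_\tau$ estimate on $\hat\varphi_T$ paired with a suitable norm of $\hat F$; a scaling computation using $\hat\varphi_T = T\hat\varphi(T\cdot)$ extracts the explicit power of $T$.

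For the $Z$-estimate, with gap $b_1 - b_0 = \dl$ and matched integrability $q_0$, Young is applied with $m=1$ and the headroom $\langle \sigma - n^3\rangle^{-\dl}$ together with the weighted bound $\|\langle \tau\rangle^{b_0}\hat\varphi_T\|_{L^1_\tau}$ produce the desired $T^\theta$ after a standard interpolation. For the $Y$-estimate, at the borderline $b = \frac12$ with matched weight, Young is applied with $1 + 1/r_0 = 1/m + 1/r_1$, so $1/m = 1 - \dl$, giving $\|\hat\varphi_T\|_{L^m} \sim T^\dl$; however, the kernel $\langle \rho\rangle^{1/2}|\hat\varphi_T(\rho)|$ has $L^m_\tau$-norm of order $T^{\dl - 1/2}$, which would diverge as $T\to 0$. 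This is where $F(0) = 0$ is essential: it translates to the vanishing moment
\begin{equation*}
\int_\R \hat F(\tau, n)  d\tau = 2\pi F(0, n) = 0 \quad \text{for each } n,
\end{equation*}
which allows one to replace $\hat\varphi_T(\tau-\sigma)$ by $\hat\varphi_T(\tau-\sigma) - \hat\varphi_T(\tau)$ inside the convolution and apply a Lipschitz bound $|\hat\varphi_T(\tau-\sigma) - \hat\varphi_T(\tau)| \lesssim T\min(1, T|\sigma|)$ coming from $\|\hat\varphi_T'\|_{L^\infty} \lesssim T^2$, thereby recovering a positive power of $T$.

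The main obstacle is precisely this endpoint case in the $Y$-estimate, where the borderline $b = \frac12$ leaves no room for a standard time-localization gain and the vanishing condition on $F$ must be exploited carefully to provide the required structural estimate. Once this is done, taking the $\ell^p_n$ norm commutes trivially with the analysis since $n$ is fixed throughout, and the $Z$-case follows by the same scheme in a less delicate regime, yielding both stated bounds for any $0 < \theta \leq \dl/2$.
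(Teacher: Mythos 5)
Your skeleton is the same as the paper's: reduce to a one-dimensional convolution estimate in $\tau$, uniform in $n$, use Young's inequality where the modulation weight can be transferred onto $\ft{F}$, and exploit $F(0)=0$ through the vanishing moment $\int_\R \ft{F}(\tau,n)\,d\tau=0$ for the endpoint contribution. However, two steps would fail as written. First, your treatment of the $Z$-estimate is not correct: you claim it follows from Young with $m=1$ together with the headroom $\jb{\sigma-n^3}^{-\dl}$ and ``standard interpolation'', with the vanishing condition reserved for the $Y$-case only. The $Z$-estimate is false without the cancellation: take $\ft{F}(\cdot,n)$ a unit-mass bump at zero modulation, so that $\ft{\varphi}_T\ast_\tau\ft{F}\approx T\ft{\varphi}(T\,\cdot)$; then $\|\jb{\tau-n^3}^{b_0}\,\ft{\varphi}_T\ast\ft{F}\|_{L^{q_0}_\tau}\sim T^{\,1-b_0-\frac{1}{q_0}}=T^{-2\dl}$, which diverges as $T\to0$, while the right-hand side is $O(T^{\theta})$. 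The weight gap $b_1-b_0=\dl$ only produces a power of $T$ in the region $|\sigma-n^3|\ges T^{-1}$ (where $\jb{\sigma-n^3}^{-\dl}\les T^{\dl}$); at low modulations the gain must come entirely from the mean-zero cancellation. This is exactly why the paper proves a single convolution estimate for mean-zero $f$ and applies it to both the $Y$ and $Z$ bounds.

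Second, in the $Y$-estimate the cancellation is invoked where it is no longer available. After the pointwise split $\jb{\tau-n^3}^{\frac12}\les\jb{\tau-\sigma}^{\frac12}+\jb{\sigma-n^3}^{\frac12}$ and taking absolute values, the problematic piece $(\jb{\cdot}^{\frac12}|\ft{\varphi}_T|)\ast|\ft{F}|$ cannot be repaired by subtracting $\ft{\varphi}_T(\tau)$: the weight attached to the kernel destroys the exact vanishing of the subtracted term. If instead you subtract at the level of the full convolution (which the moment condition does allow), the bound $|\ft{\varphi}_T(\tau-\sigma)-\ft{\varphi}_T(\tau)|\les T\min(1,T|\sigma|)$ is insufficient: it is uniform in $\tau$, and $\jb{\tau-n^3}^{\frac12}$ times a constant is not in $L^{r_0}_\tau$, so the weighted norm diverges. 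What is needed, and what the paper does, is to split $\ft{F}$ according to $|\sigma-n^3|\gtrless T^{-1}$: on the high-modulation piece transfer the weight to $\ft{F}$ and apply Young (there $1\les (T\jb{\sigma-n^3})^{\frac12}$, or $\jb{\sigma-n^3}^{-\dl}\les T^{\dl}$ in the $Z$-case), and on the low-modulation piece use the subtraction with a mean value theorem bound of the form $T\min(1,T|\ld|)\,\jb{T\tau}^{-\al}$, i.e.\ one that retains rapid decay in $T\tau$, while also accounting for the fact that the truncated piece is no longer mean-zero (this produces the extra term $-\int_{|\ld|\ge T^{-1}}f(\ld)\,d\ld\cdot T\ft{\varphi}(T\tau)$ in the paper's proof, which must be estimated separately). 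Without these refinements neither of the two displayed estimates closes.
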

\begin{proof}
We want to estimate the following quantity
\begin{align*}
\| \varphi_T \cdot F \|_{X^{s,b}_{p,q}} 
& = \bigg\| \jb{n}^s \jb{\tau}^b \big( \ft{\varphi}_T \ast_\tau \ft{F}(\cdot+n^3,n) \big)(\tau)  \bigg\|_{\l^p_n L^{q}_\tau}.
\end{align*}
Both estimates follow once we show
\begin{align*}
\|\jb{\tau}^b \ft{\varphi}_T \ast f (\tau) \|_{L^q_\tau} \les T^{\frac{1}{\tilde{q}} - \frac1q} \| \jb{\tau}^b f (\tau) \|_{L^{\tilde{q}}_\tau}, 
\end{align*}
for $f$ satisfying $\int_\R f(\tau) \, d\tau = 0$, $1<\tilde{q}<q<\infty$ and $b<1<b+\frac{1}{\tilde{q}}$. 
The estimate above follows once we prove the two following estimates:
\begin{align}
\big\| \jb{\tau}^b \big(  \ft{\varphi}_T \ast(\1_{|\tau| \geq T^{-1} } f) \big) (\tau) \big\|_{L^q_\tau} & \les T^{\frac{1}{\tilde{q}} - \frac1q} \| \jb{\tau}^b f(\tau) \|_{L^{\tilde{q}}_\tau}, \nonumber\\
\big\| \jb{\tau}^b \big( \ft{\varphi}_T \ast (\1_{|\tau| < T^{-1} }  f ) \big)(\tau) \big\|_{L^q_\tau} & \les T^{\frac{1}{\tilde{q}} - \frac1q} \| \jb{\tau}^b f(\tau) \|_{L^{\tilde{q}}_\tau}. \label{linear_show2}
\end{align}
Note that the first inequality is equivalent to the following result
\begin{align}
\bigg\| \intt_\R \frac{\jb{\tau}^b}{\jb{\ld}^b} \1_{|\ld| \geq T^{-1}} \ft{\varphi}_T(\tau - \ld) f (\ld) \, d\ld \bigg\|_{L^q_{\tau}} \les T^{\frac{1}{\tilde{q}} - \frac1q} \|f\|_{L^{\tilde{q}}_\tau}. \label{linear_show1_aux}
\end{align}
Using Young's inequality with $1 + \frac1q = \frac{1}{\tilde{q}} + \frac1r$ gives
\begin{align*}
\text{LHS of }\eqref{linear_show1_aux} & \les \bigg\| \intt_\R \jb{T(\tau-\ld)}^b T \ft{\varphi}(T(\tau-\ld)) f (\ld) \ d \ld \bigg\|_{L^q_\tau} 
 \les T \|  \jb{T\tau}^b \ft{\varphi}(T\tau) \|_{L^r_{\tau}} \| f\|_{L^{\tilde{q}}_\tau}.
\end{align*}
The estimate follows from $T \| \jb{T\tau}^b \ft{\varphi}(T\tau) \|_{L^r_\tau} \les_\varphi T^{1-\frac1r} = T^{\frac{1}{\tilde{q}} - \frac1q}$.

To prove \eqref{linear_show2}, using the fact that $\int_\R f(\tau) \, d\tau = 0$, we note that
\begin{align}
\ft{\varphi}_T \ast \big( \1_{|\tau| < T^{-1}} f \big)  (\tau) 
=\!\!\!\!\! \intt_{|\ld| < T^{-1}} \!\!\!\!\! f(\ld) T \big[ \ft{\varphi}(T(\tau - \ld)) - \ft{\varphi}(T\tau) \big] \, d\ld -\!\!\!\! \intt_{|\ld|\geq T^{-1}} \!\!\!\!\! f(\ld) T\ft{\varphi}(T\tau) \, d\ld. \label{linear_show2_aux}
\end{align}
For the first contribution in \eqref{linear_show2_aux}, we distinguish between the regions $\{\tau: \, |\tau| \les T^{-1}\}$ and $\{\tau: \, |\tau| \gg T^{-1}\}$, and use mean value theorem to obtain
\begin{align*}
\intt_{|\ld| < T^{-1}} T |f(\ld)| |\ft{\varphi} (T(\tau-\ld)) - \ft{\varphi}(T\tau)| \, d\ld \les_{\varphi} \frac{T}{\jb{T\tau}^\al} \intt_\R |T\ld| \, |f(\ld)| \, d\ld,
\end{align*}
for any $\al>0$. 
For the second contribution in \eqref{linear_show2_aux}, we have
\begin{align*}
\intt_{|\ld| \geq T^{-1}} |f(\ld)| T|\ft{\varphi} (T\tau)| d\ld & \les_{\varphi} \frac{T}{\jb{T\tau}^\al} \intt_\R  |f(\ld)| \ d\ld.
\end{align*}
Combining the estimates for the two contributions in \eqref{linear_show2_aux}, we obtain
\begin{align*}
\bigg\| \jb{\tau}^b \ft{\varphi}_T \ast \big( \1_{|\tau| < T^{-1}} f \big)  (\tau) \bigg\|_{L^q_\tau}
& \les \bigg\| \frac{T\jb{\tau}^b}{\jb{T\tau}^\al} \bigg\|_{L^q_\tau} \| \min(1, |T\ld|) \jb{\ld}^{-b} \|_{L^r_\ld} \|  \jb{\tau}^b f(\tau) \|_{L^{\tilde{q}}_\tau}, 
\end{align*}
by using H\"older's inequality for the last step with $1 = \frac{1}{r} + \frac{1}{\tilde{q}}$. 
Thus, we have 
\begin{align*}
\bigg\| \frac{T\jb{\tau}^b}{\jb{T\tau}^\al} \bigg\|_{L^q_\tau}  
& \les T^{1-b-\frac1q}, \\
\| \min(1, |T\ld|) \jb{\ld}^{-b} \|_{L^r_\ld} & \les T^{b-\frac1r},
\end{align*}
given that $b>1 - \frac{1}{\tilde{q}}$ and $b<1$. Combining the two bounds, we obtain the intended power of $T$.

\end{proof}

\begin{remark}\rm
	Lemma~\ref{lm:T} will only be applied to functions of the form $F(t) = \int_0^t G(t') \, dt'$ which satisfy the assumption $F(0)=0$, namely the Duhamel operator $\D$ and the operators $\G, \B$ defined in Section~\ref{sec:kernel}.
\end{remark}

The following lemma estimates the number of divisors of a given natural number. The result was adapted from \cite{DNY}.

\begin{lemma}\label{lm:divisor}
	Fix $0<\eps<1$, $\rho\geq 1$ and let $k,q\in\Z$ such that $|q| \ges |k|^\eps >0$. Then, the number of divisors $r\in\Z$ of $k$ that satisfy $|r-q| \les \rho$ is at most $\mathcal{O}_\eps (\rho^\eps)$.
	
\end{lemma}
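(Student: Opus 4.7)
The plan is to combine the classical divisor bound $d(n)\les_\delta |n|^\delta$ (valid for every $\delta>0$) with a geometric/arithmetic observation: if many divisors of $k$ cluster in the short interval $[q-C\rho, q+C\rho]$, then their pairwise greatest common divisors are at most $2C\rho$, which forces their least common multiple (and hence $|k|$) to be large. This tension will bound $|k|$ from above in terms of $\rho$, which, plugged back into the divisor bound, yields the claimed $\rho^\eps$ estimate.

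I would split into two regimes. If $\rho\ges|k|^\eps$, then $|k|\les\rho^{1/\eps}$, and applying the divisor bound with exponent $\eps^2$ directly gives $N\leq d(k)\les_\eps |k|^{\eps^2}\les_\eps\rho^\eps$. Otherwise, $\rho\ll|k|^\eps\leq|q|$, so (after reducing to $q>0$) every divisor $r$ in the window satisfies $r\sim q$. Fix $M=\lceil 2/\eps\rceil$, so that $M\eps\geq 2$. If fewer than $M$ divisors of $k$ lie in the window, then $N\leq M=O_\eps(1)\leq O_\eps(\rho^\eps)$ since $\rho\geq 1$. Otherwise, pick $M$ of them, $r_1<\cdots<r_M$, and set $M_j:=\mathrm{lcm}(r_1,\ldots,r_j)$. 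The lattice identity
\[
\gcd(\mathrm{lcm}(a,b),c)=\mathrm{lcm}(\gcd(a,c),\gcd(b,c)),
\]
verified prime-by-prime from the distributivity of $\min$ over $\max$ on valuations, iterates to
\[
\gcd(M_{j-1},r_j)=\mathrm{lcm}_{i<j}\gcd(r_i,r_j)\leq (2C\rho)^{j-1},
\]
since each $\gcd(r_i,r_j)$ divides $r_j-r_i$ and is therefore at most $2C\rho$. Telescoping $M_j=M_{j-1}r_j/\gcd(M_{j-1},r_j)$ with $r_j\sim q\geq|k|^\eps$ produces $|k|\geq M_M\ges q^M/\rho^{\binom{M}{2}}$, whence $|k|^{M\eps-1}\les\rho^{\binom{M}{2}}$. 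Setting $\delta:=\eps(M\eps-1)/\binom{M}{2}>0$ rewrites this as $|k|^\delta\les\rho^\eps$, and one last application of the divisor bound gives $N\leq d(k)\les_\delta|k|^\delta\les_\eps\rho^\eps$.

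The main obstacle I anticipate is the bookkeeping with exponents: one must pick $M$ satisfying $M\eps\geq 2$ to guarantee $\delta>0$, and then carefully balance powers so that the final divisor-bound application returns exactly $\rho^\eps$ rather than, say, $\rho^{C\eps}$ for some absolute $C$. Once the lattice distributivity of $\gcd$ over $\mathrm{lcm}$ is recognized, the iterated lcm estimate itself is essentially mechanical.
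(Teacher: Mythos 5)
Your proof is correct: the lattice identity for $\gcd$/$\mathrm{lcm}$, the bound $\gcd(r_i,r_j)\leq |r_j-r_i|\les\rho$, the telescoped estimate $|k|\geq \mathrm{lcm}(r_1,\ldots,r_M)\ges q^M\rho^{-\binom{M}{2}}$ with $M\eps\geq 2$, and the final application of the divisor bound $d(k)\les_\delta |k|^\delta$ all check out, including the case $\rho\ges |k|^\eps$. The paper itself offers no proof of this lemma (it is cited from Deng--Nahmod--Yue), and your argument --- a cluster of divisors in a short interval forces a large least common multiple, which bounds $|k|$ by a power of $\rho$, after which the divisor bound finishes --- is essentially the standard argument behind that cited result, so there is nothing substantive to compare beyond noting the agreement.
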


Lastly, recall the following well-known tool (see \cite[Lemma 4.2]{GTV}).
\begin{lemma}\label{lm:convolution}
	Let $0\leq \al\leq \be$ such that $\al + \be>1$ and $\eps>0$. Then, we have
	\begin{align*}
	\int_\R \frac{1}{\jb{x-a}^{\al} \jb{x-b}^{\be} } dx \les \frac{1}{ \jb{a-b}^{\gamma} },
	\end{align*}
	where 
	\begin{align*}
	\quad \gamma =
	\begin{cases}
	\al + \be -1 , & \be<1, \\
	\al - \eps, & \be =1, \\
	\al, & \be>1.
	\end{cases}
	\end{align*}
\end{lemma}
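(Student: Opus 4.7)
The estimate is a classical weighted convolution inequality; the standard strategy is a three–region decomposition of the integration variable based on the triangle inequality $|x-a|+|x-b|\geq|a-b|$. First I would reduce to the case $|a-b|\gg 1$, since otherwise $\jb{a-b}\sim 1$ and the bound reduces to $\int_\R \jb{x-a}^{-\al}\jb{x-b}^{-\be}\,dx<\infty$, which is finite under $\al+\be>1$ (with an $\eps$-loss at the endpoint $\be=1$, handled below).

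Assuming $|a-b|\gg 1$, I would split $\R$ into
\begin{align*}
\mathrm{I}&=\{x:\,|x-a|\leq \tfrac12|a-b|\},\\
\mathrm{II}&=\{x:\,|x-b|\leq \tfrac12|a-b|\},\\
\mathrm{III}&=\R\setminus(\mathrm{I}\cup\mathrm{II}).
\end{align*}
On $\mathrm{I}$ the triangle inequality forces $|x-b|\gtrsim |a-b|$, so $\jb{x-b}^{-\be}\les \jb{a-b}^{-\be}$, and the contribution is controlled by $\jb{a-b}^{-\be}\int_{|y|\leq |a-b|/2}\jb{y}^{-\al}\,dy$, which evaluates (up to constants) to $\jb{a-b}^{\max(1-\al,0)-\be}$ modulo a possible logarithm when $\al=1$. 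By the symmetric argument on $\mathrm{II}$ (swap the roles of $a,b$ and of $\al,\be$) one gets $\jb{a-b}^{\max(1-\be,0)-\al}$, again up to a logarithm when $\be=1$. On $\mathrm{III}$ both weights satisfy $\jb{x-a},\jb{x-b}\gtrsim\jb{a-b}$; splitting the weight as $\jb{x-a}^{-\al}\jb{x-b}^{-\be}\les \jb{a-b}^{-\al}\jb{x-b}^{-\be}$ and integrating over $|x-b|>|a-b|/2$ yields $\jb{a-b}^{1-\al-\be}$ when $\be<1$ and a smaller bound when $\be\geq 1$, so $\mathrm{III}$ is never the dominant contribution.

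Collecting the three bounds and using $\al\leq\be$ together with $\al+\be>1$ gives exactly the claimed $\gamma$: when $\be>1$ the $\mathrm{II}$ estimate dominates and yields $\jb{a-b}^{-\al}$; when $\be<1$ both $\mathrm{I}$ and $\mathrm{II}$ yield $\jb{a-b}^{1-\al-\be}=\jb{a-b}^{-(\al+\be-1)}$; and the only delicate point, which I expect to be the main obstacle (though minor), is the endpoint $\be=1$, where the radial integral $\int_{|y|\leq|a-b|/2}\jb{y}^{-1}\,dy\sim\log\jb{a-b}$ produces a logarithm on $\mathrm{II}$. I would absorb it via $\log\jb{a-b}\les_\eps \jb{a-b}^{\eps}$, producing the $\al-\eps$ exponent. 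Once the three regions are summed, the lemma follows.
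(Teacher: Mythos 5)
The paper does not prove this lemma at all: it is quoted as a known result from Ginibre--Tsutsumi--Velo \cite[Lemma 4.2]{GTV}, so there is no in-paper argument to compare against. Your three-region decomposition is the standard proof of that cited lemma, and the bookkeeping in regions $\mathrm{I}$ and $\mathrm{II}$, together with the absorption of the logarithm at $\be=1$ via $\log\jb{a-b}\les_\eps\jb{a-b}^\eps$, is exactly right and does yield the stated exponents $\gamma$ in all three cases.

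The one step that would fail as written is region $\mathrm{III}$. After bounding $\jb{x-a}^{-\al}\les\jb{a-b}^{-\al}$, the remaining integral $\int_{|x-b|>|a-b|/2}\jb{x-b}^{-\be}\,dx$ is \emph{divergent} whenever $\be\leq 1$, so your claim that this splitting ``yields $\jb{a-b}^{1-\al-\be}$ when $\be<1$'' does not follow from the computation you describe (in fact the splitting produces $\jb{a-b}^{1-\al-\be}$ precisely when $\be>1$). The repair is immediate and standard: on region $\mathrm{III}$ one has $|x-a|\leq |x-b|+|a-b|\leq 3|x-b|$ and symmetrically, so $\jb{x-a}\sim\jb{x-b}\ges\jb{a-b}$ there; hence the integrand is $\les\jb{x-b}^{-\al-\be}$, and since $\al+\be>1$ the integral over $|x-b|\ges|a-b|$ is $\les\jb{a-b}^{1-\al-\be}$, which is indeed never the dominant term. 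With that correction your argument is complete.
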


\section{Splitting the Duhamel operator}\label{sec:kernel}
In this section, we make the smoothing-in-time of the Duhamel operator explicit by estimating its kernel. The lack of smoothing in space motivates us to introduce a modified version of the Duhamel operator localized to $\X_A(n), \X_B(n)$, in order to control the derivative from the nonlinearity. Moreover, we show kernel estimates for this modified Duhamel operator as well as the remainder operator.

\begin{proposition}\label{prop:KI} \
	The truncated Duhamel operator $\D$ has the following space-time Fourier transform
	\begin{align*}
	\Ft_{t,x} \big( \D F \big)(\tau,n) = \intt_\R K(\tau - n^3, \ld - n^3) \ft{F}(\ld, n) \, d\ld
	\end{align*}
	where the kernel $K$ is given by the following expression
	\begin{align}
	K(\tau , \ld) = -i \intt_\R \ft{\varphi}(\mu - \ld) \frac{\ft{\varphi}(\tau - \mu) - \ft{\varphi}(\tau)}{\mu} \, d\mu \label{K}
	\end{align}
	and satisfies the following estimates
	\begin{align}
	| K (\tau, \ld) | & \les_{\al, \varphi} \bigg( \frac{1}{\jb{\tau - \ld}^\al} + \frac{1}{\jb{\tau}^\al} \bigg)\frac{1}{\jb{\ld} } \nonumber\\
	& \les \frac{1}{\jb{\tau} \jb{\tau - \ld}}  \label{estimateK_new}
	\end{align}
	where $\al$ is a large enough positive number.
\end{proposition}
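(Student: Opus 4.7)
The plan for the kernel formula is a direct computation. I would first apply $\Ft_x$ to $\D F$, giving
\begin{equation*}
\Ft_x(\D F)(t,n) = \varphi(t) e^{itn^3} \int_0^t e^{-it'n^3} \varphi(t') \Ft_x F(t', n) \, dt'.
\end{equation*}
Then I plan to Fourier-invert $\varphi \Ft_x F$ in $t'$ and compute the elementary integral $\int_0^t e^{it'(\ld' - n^3)}\, dt' = (e^{it(\ld' - n^3)} - 1)/(i(\ld' - n^3))$, which reduces the $t'$-integral to a linear combination of $\varphi(t) e^{it\ld'}$ and $\varphi(t) e^{itn^3}$. Taking $\Ft_t$ replaces these by $\ft{\varphi}(\tau - \ld')$ and $\ft{\varphi}(\tau - n^3)$ respectively. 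Expanding $\Ft_t(\varphi \Ft_x F)(\ld', n) = (\ft{\varphi} \ast \ft{F}(\cdot, n))(\ld')$, swapping integrals, and changing variables $\mu = \ld' - n^3$ should produce the claimed representation with $K$ as in \eqref{K}.

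\textbf{Pointwise estimate.} The key observation I would exploit is that the quotient $\Psi(\tau, \mu) := (\ft{\varphi}(\tau - \mu) - \ft{\varphi}(\tau))/\mu$ is smooth in $\mu$ (the apparent singularity at $\mu=0$ is removable) and admits two complementary bounds: for $|\mu| \leq 1$, the mean value theorem gives $|\Psi(\tau, \mu)| \les \sup_{|s| \leq 1} |\ft{\varphi}'(\tau - s)| \les \jb{\tau}^{-\al}$, while for $|\mu| > 1$ the triangle inequality gives $|\Psi(\tau, \mu)| \les (|\ft{\varphi}(\tau - \mu)| + |\ft{\varphi}(\tau)|)/|\mu|$. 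To recover the crucial $\jb{\ld}^{-1}$ factor, I would change variables $\nu = \mu - \ld$ in \eqref{K} so that $\ft{\varphi}(\nu)$ concentrates $\nu$ near $0$ and hence $\mu = \ld + \nu \sim \ld$ on the bulk. For $|\ld| \geq 2$ I plan to split into $|\nu| \leq |\ld|/2$ (where $|\mu| \ges |\ld|$, so $1/|\mu|$ directly produces $1/\jb{\ld}$ and the numerator pieces reduce to $|\ft{\varphi}(\tau)| \les \jb{\tau}^{-\al}$ or the rapidly decaying convolution $(|\ft{\varphi}| \ast |\ft{\varphi}|)(\tau - \ld) \les \jb{\tau - \ld}^{-\al}$) versus $|\nu| > |\ld|/2$ (where Schwartz decay allows trading $|\ft{\varphi}(\nu)| \les \jb{\ld}^{-1} \jb{\nu}^{-\al}$, with a further sub-split according to whether $|\mu| \leq 1$ or $|\mu| > 1$ to cope with the potentially small denominator using $\Psi$'s mean-value bound or Lemma~\ref{lm:convolution}, respectively). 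The easier range $|\ld| < 2$ satisfies $\jb{\ld} \sim 1$ and direct application of the two bounds on $\Psi$ together with rapid decay of $\ft{\varphi}$ will suffice.

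\textbf{Second estimate and main obstacle.} The second bound should follow from the first by taking $\al \geq 2$ and invoking the triangle inequalities $\jb{\tau} \les \jb{\ld} \jb{\tau - \ld}$ and $\jb{\tau - \ld} \les \jb{\tau}\jb{\ld}$, which convert $(\jb{\tau - \ld}^{-\al} + \jb{\tau}^{-\al}) \jb{\ld}^{-1}$ into $\jb{\tau}^{-1} \jb{\tau - \ld}^{-1}$. The main technical obstacle will be the extraction of the $\jb{\ld}^{-1}$ gain: the apparently singular factor $1/\mu$ in \eqref{K} is harmless only because the Schwartz weight $\ft{\varphi}(\mu - \ld)$ localizes $\mu$ near $\ld$, so $1/|\mu| \sim 1/|\ld|$ on the dominant part of the integration; off that part one must carefully trade the rapid decay of $\ft{\varphi}(\mu - \ld)$ against the possible smallness of $|\mu|$, ensuring that the removable singularity of $\Psi$ at $\mu=0$ is correctly exploited.
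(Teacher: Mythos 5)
Your proposal is correct, but it reaches the kernel formula by a genuinely different route than the paper. For the representation, you Fourier-invert $\varphi\,\Ft_x F$ in $t'$ and integrate $\int_0^t e^{it'(\ld'-n^3)}\,dt'$ explicitly, so the factor $(e^{it(\ld'-n^3)}-1)/(i(\ld'-n^3))$ appears as an entire function and no distributional objects are needed; the paper instead uses the identity $\int_0^t f = \tfrac12\int_\R f\,(\sgn(t-t')+\sgn(t'))$, which brings in the principal value $1/\mu$ and a delta at $\mu=n^3$ before convolving with $\ft\varphi$. Your computation is arguably cleaner and avoids justifying the distributional steps; the paper's $\sgn$-based route has the advantage that it is exactly the template reused for the modified operators $\G_*$ and $\B_*$ in Propositions~\ref{prop:KY} and \ref{prop:KX}, where the Hilbert transforms $\H\ft{\eta}$, $\H\ft{\varphi}$ arise precisely from the $\sgn$ factors. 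For the pointwise bound, both arguments extract the same two ingredients (mean value theorem near $\mu=0$, rapid decay of $\ft\varphi$ elsewhere), but you organize the splitting differently: you set $\nu=\mu-\ld$ and distinguish $|\nu|\lessgtr|\ld|/2$, harvesting the $\jb{\ld}^{-1}$ either from $1/|\mu|\sim 1/|\ld|$ or from the Schwartz decay of $\ft\varphi(\nu)$, whereas the paper splits by $|\mu|\lessgtr 1$ and gets the $\jb{\ld}^{-1}$ from a Cauchy--Schwarz application of Lemma~\ref{lm:convolution} (via $\big(\int \jb{\mu}^{-2}\jb{\mu-\ld}^{-2}d\mu\big)^{1/2}$). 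Both deliver the bound $\big(\jb{\tau-\ld}^{-\al}+\jb{\tau}^{-\al}\big)\jb{\ld}^{-1}$, and your passage to \eqref{estimateK_new} via $\jb{\tau}\les\jb{\ld}\jb{\tau-\ld}$, $\jb{\tau-\ld}\les\jb{\tau}\jb{\ld}$ with $\al\geq 2$ is identical to the paper's.
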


\begin{proof}
	We start by calculating the space-time Fourier transform of $\D F$,
	\begin{align*}
	\Ft_{t,x} \big( \D F \big)(\tau,n) & = \intt_\R \ft{\varphi}(\tau - \mu) \Ft_t \bigg( \int_0^t e^{i(t-t')n^3} \varphi(t') \ft{F}(t',n) \ dt' \bigg)(\mu) \, d\mu.
	\end{align*}
	Using the fact that $\int_0^t f(t') \ dt' = \tfrac12 \intt_\R f(t') \big( \sgn(t-t') + \sgn(t') \big) \, dt'$, we have
	\begin{multline*}
	\Ft_t \bigg( \int_0^t e^{i(t-t')n^3} \varphi(t') \ft{F}(t',n) \ dt' \bigg)(\mu) 
	 = \frac{1}{4\pi} \intt_\R e^{-it'\mu} \varphi(t') \ft{F}(t',n) \, dt' \intt_\R e^{-it(\mu-n^3)} \sgn(t)  dt \\
	+ \frac{1}{4\pi} \intt_\R e^{-it'n^3} \varphi(t') \ft{F}(t',n) \sgn(t') \,dt' \intt_\R e^{-it(\mu-n^3)} \, dt.
	\end{multline*}
	Consequently, 
	\begin{multline*}
	\Ft_t \bigg( \int_0^t e^{i(t-t')n^3} \varphi(t') \ft{F}(t',n) \ dt' \bigg)(\mu)   =  \frac{-i}{\mu-n^3} \intt_\R \ft{\varphi}(\mu-\ld) \ft{F}(\ld, n) \, d\ld \\ -i\dl(\mu-n^3) \intt_\R \intt_\R  \ft{\varphi}(n^3 - \ld - \mu') \frac{1}{\mu'} \ft{F}(\ld, n) \, d\mu' \, d\ld.
	\end{multline*}
	Calculating the convolution with $\ft{\varphi}$, we get the intended expression.

	It remains to show the estimate on the kernel. Consider the 2 regions of integration $\{\mu: \ |\mu| > 1\}$ and $\{\mu: \ |\mu| \leq 1\}$. Then, for the first region, using Cauchy-Schwarz inequality and Lemma~\ref{lm:convolution}, we have

\begin{align*}
\intt_{|\mu|>1} \frac{|\ft{\varphi}(\tau - \mu) \ft{\varphi}(\mu - \ld) |}{|\mu|} \, d\mu 
&\les_\varphi \bigg(\intt_\R \frac{d\mu}{\jb{\tau-\mu}^{2\al} \jb{\mu-\ld}^{1+2\al}} \bigg)^\frac12 \bigg( \intt_R \frac{d\mu}{\jb{\mu}^2 \jb{\mu-\ld}^2} \bigg)^\frac12 \\
&\les \frac{1}{\jb{\ld} \jb{\tau - \ld}^\al},\\
 \intt_{|\mu|>1} \frac{|\ft{\varphi}(\tau) \ft{\varphi}(\mu - \ld)|}{ |\mu| } d\mu &\les_{\varphi} \frac{1}{\jb{\tau}^\al}\intt_\R \frac{1}{\jb{\mu-\ld}^\al\jb{\mu}} \, d\mu 
\les \frac{1}{\jb{\ld} \jb{\tau}^\al },
\end{align*}
for any $\al>0$.
	For the second region, we distinguish two cases for $\tau$: $|\tau|\ges 1$ or $|\tau| \gg 1$. In both cases, using mean value theorem, we obtain
	\begin{align*}
	\intt_{|\mu|\leq 1} |\ft{\varphi}(\mu - \ld)| \frac{|\ft{\varphi}(\tau - \mu) - \ft{\varphi}(\tau)|}{|\mu|} d\mu 
	& \les_{\varphi} \frac{1}{\jb{\ld} \jb{\tau}^\al }, 
	\end{align*}
	for any $\al>0$.

	Combining the estimates, we get
	\begin{align*}
	|K(\tau, \ld)| \les_{\varphi} \bigg( \frac{1}{\jb{\tau - \ld}^\al} + \frac{1}{\jb{\tau}^\al} \bigg)\frac{1}{\jb{\ld} }.
	\end{align*}
	To show \eqref{estimateK_new}, note that $\jb{\tau} \les \jb{\tau-\ld} \jb{\ld}$ and $\jb{\tau-\ld} \les \jb{\tau} \jb{\ld}$. Thus,
	\begin{align*}
	\bigg( \frac{1}{\jb{\tau - \ld}^\al} + \frac{1}{\jb{\tau}^\al} \bigg)\frac{1}{\jb{\ld} }  \les \frac{1}{\jb{\tau} \jb{\tau-\ld}^{\al-1}} + \frac{1}{\jb{\tau-\ld} \jb{\tau}^{\al-1}} 
	\les \frac{1}{\jb{\tau} \jb{\tau-\ld}},
	\end{align*}
	for $\al\geq 2$.
\end{proof}

We want to split each of the non-resonant contributions $\D  \mathcal{NR}_{*, \geq}, \D  \mathcal{NR}_{*,>}$ for $*\in\{A,B\}$ into two components: $\G_{*\geq}, \B_{*,\geq}$ and $\G_{*, >}, \B_{*,>}$, respectively. The $\G$ contributions will have sufficient smoothing in space to allow us to control the derivative from the nonlinearity. This follows by introducing a convolution with a smooth function $\eta$ parameterized by the resonance relation $\Phi(\nbar_{123})$.

Consider a Schwartz function $\eta$ satisfying
\begin{align}
\ft{\eta}(-1) = 0, \quad \H \ft{\eta}(-1) =-1, \label{eta}
\end{align}
where $\mathcal{H}$ denotes the Hilbert transform, i.e., principal value convolution with the function~$\frac{1}{\tau}$. We define the operators $\G_{*, \geq}, \B_{*,\geq}$ through their spatial Fourier transform as follows
\begin{align*}
& \Ft_x\big( \G_{*, \geq}  (u_1,u_2,u_3) \big)(t,n) \\
& \phantom{XXX}= \varphi(t) \sum_{\substack{n=n_1+n_2+n_3,\\\nbar_{123}\in\X_*(n),\\|n_2| \geq |n_3|}}i n_1 \int_0^t e^{i(t-t')n^3}  \eta\big(\Phi(\nbar_{123})(t-t')\big) \varphi(t') \prod_{j=1}^3 \ft{u}_j(t',n_j) \, dt',\\
& \Ft_x\big( \B_{*, \geq}  (u_1,u_2,u_3)\big)(t,n) \\
& \phantom{XXX}= \varphi(t) \sum_{\substack{n=n_1+n_2+n_3,\\\nbar_{123}\in\X_*(n), \\ |n_2| \geq |n_3|}} i n_1 \int_0^t e^{i(t-t')n^3} \big[1 - \eta\big(\Phi(\nbar_{123})(t-t')\big)\big]  \varphi(t')\prod_{j=1}^3 \ft{u}_j(t',n_j) \, dt',
\end{align*}
with equivalent definitions for $\G_{*,>}, \B_{*,>}$ where we impose the condition $|n_2| > |n_3|$ to the sum. 

In order to control the contributions that depend on $\G_*$, for $*\in\{A,B\}$, we want to calculate and estimate the kernel of this convolution operator.
\begin{proposition}\label{prop:KY}
	Let $*\in\{A,B\}$. Then, the convolution operators $\G_{*, \geq}, \G_{*,>}$ have the following space-time Fourier transform
	\begin{multline*}
	\Ft_{t,x} \big( \G_{*,\underset{(>)}{\geq}} (u_1,u_2,u_3) \big)(\tau,n)  \\ =  \sum_{\substack{n=n_1+n_2+n_3,\\ \nbar_{123}\in\X_*(n), \\ |n_2| \underset{ (>)}{\geq} |n_3| } } n_1 \intt_\R K_G\big(\tau-n^3, \ld - n^3, \Phi(\nbar_{123})\big) 
	 \intt_{\ld = \tau_1 + \tau_2 + \tau_3} \prod_{j=1}^3 \ft{u}_j (\tau_j, n_j) \, d\tau_1 \, d\tau_2 \,d\ld,
	\end{multline*}
	where the kernel $K_G$ is given by the following expression
	\begin{align}
	K_G (\tau, \ld, \Phi) = \intt_\R \bigg(\ft{\varphi}(\tau-\mu) \ft{\varphi}(\mu - \ld) \frac{1}{\Phi} \H\ft{\eta}\Big(\frac{\mu}{\Phi}\Big) + \ft{\varphi}(\tau - \mu) \H \ft{\varphi} (\mu - \ld ) \frac{1}{\Phi} \ft{\eta} \Big(\frac{\mu}{\Phi}\Big) \bigg)\, d\mu , \label{KY}
	\end{align}
	and satisfies the following estimates
	\begin{align}
	| K_G (\tau, \ld, \Phi) | & \les_{\al, \varphi, \eta} \frac{1}{\jb{\tau-\ld}^\al} \min\bigg(\frac{1}{\jb{\Phi}}, \frac{1}{\jb{\ld}}\bigg) + \frac{1}{\jb{\tau-\ld}} \min\bigg( \frac{1}{\jb{\Phi}}, \frac{1}{\jb{\tau}} \bigg), \nonumber\\
	& \les \frac{1}{\jb{\tau-\ld}} \min\bigg( \frac{1}{\jb{\Phi}}, \frac{1}{\jb{\tau}} \bigg), \label{estimateKY_new}
	\end{align}
	where $\al$ is a large enough positive number.
\end{proposition}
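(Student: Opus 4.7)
The plan is to mirror the proof of Proposition~\ref{prop:KI}, carrying the extra time-localizer $\eta\bigl(\Phi(\nbar_{123})(t-t')\bigr)$ through each step. Fix $\nbar_{123}\in\X_*(n)$, write $\Phi = \Phi(\nbar_{123})$, and note that the cases $|n_2|\geq|n_3|$ and $|n_2|>|n_3|$ proceed identically. I would apply the identity $\int_0^t f(t')\,dt' = \frac{1}{2}\int_\R f(t')\bigl(\sgn(t-t')+\sgn(t')\bigr)\,dt'$ to the time integral defining $\mathcal{G}_{*,\geq}$, obtaining two convolutions in~$t$. On the Fourier side, using the scaling $\mathcal{F}_t(\eta(\Phi\cdot))(\mu) = \frac{1}{|\Phi|}\ft{\eta}(\mu/\Phi)$ together with the fact that pointwise multiplication by $\sgn$ corresponds (up to a constant) to the Hilbert transform on the Fourier side, the $\sgn(t-t')$ piece transfers $\H$ onto $\ft{\eta}$, producing $\frac{1}{\Phi}\H\ft{\eta}((\mu-n^3)/\Phi)$ paired with $\ft{\varphi}(\mu-\ld)$, while the $\sgn(t')$ piece transfers $\H$ onto $\ft{\varphi}$, producing $\frac{1}{\Phi}\ft{\eta}((\mu-n^3)/\Phi)$ paired with $\H\ft{\varphi}(\mu-\ld)$. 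Convolution with the outer $\varphi(t)$ contributes $\ft{\varphi}(\tau-\mu)$, and the variable shift $\tau\mapsto\tau-n^3$, $\ld\mapsto\ld-n^3$ yields \eqref{KY} after collecting the trilinear Fourier convolution over $\ld = \tau_1+\tau_2+\tau_3$.

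For the kernel bound, I would exploit that $\ft{\eta}$ and $\ft{\varphi}$ are Schwartz while $\H\ft{\eta}$ and $\H\ft{\varphi}$ are smooth with $\jb{\cdot}^{-1}$ decay at infinity. The scaled bound $\frac{1}{|\Phi|}|\H\ft{\eta}(\mu/\Phi)|\les \min(\jb{\Phi}^{-1},\jb{\mu}^{-1})$ (and similarly for $\ft{\eta}$, with faster polynomial decay) isolates the space-smoothing factor $\jb{\Phi}^{-1}$. For the first term of \eqref{KY}, pairing this with the Schwartz factor $\ft{\varphi}(\mu-\ld)$ and convolving against the Schwartz factor $\ft{\varphi}(\tau-\mu)$ via Lemma~\ref{lm:convolution} yields both $\jb{\tau-\ld}^{-\al}\jb{\Phi}^{-1}$ and the alternative $\jb{\tau-\ld}^{-\al}\jb{\ld}^{-1}$, the latter obtained by forcing $\mu\approx\ld$. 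For the second term, the slower $\jb{\mu-\ld}^{-1}$ decay of $\H\ft{\varphi}(\mu-\ld)$ limits the estimate to $\jb{\tau-\ld}^{-1}$; the $\jb{\tau}^{-1}$ alternative comes from localizing $\mu\approx\tau$ via Schwartz decay of $\ft{\varphi}(\tau-\mu)$, with the potential singularity of $\H\ft{\varphi}$ handled through a mean-value cancellation in the small-$\mu$ regime, analogous to the $|\mu|\leq 1$ case of Proposition~\ref{prop:KI}. The consolidated bound \eqref{estimateKY_new} then follows from the elementary inequalities $\jb{\tau}\les\jb{\tau-\ld}\jb{\ld}$ and $\jb{\tau-\ld}\les\jb{\tau}\jb{\ld}$.

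The main obstacle will be the careful bookkeeping of the two Hilbert transforms: unlike in Proposition~\ref{prop:KI}, the kernel \eqref{KY} contains two factors with only $\jb{\cdot}^{-1}$ decay, and combining the $\jb{\Phi}^{-1}$ smoothing with the appropriate tail decay will require splitting the $\mu$-integral across the regions $|\mu|\les|\Phi|$, $|\mu|\sim|\tau|$, $|\mu|\sim|\ld|$, and their complements, and tracking which factor retains Schwartz decay in each region. The asymmetry between $\jb{\ld}$ and $\jb{\tau}$ in the two terms of the intermediate bound reflects which of $\ft{\varphi}(\mu-\ld)$ or $\ft{\varphi}(\tau-\mu)$ survives untouched by $\H$. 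The payoff is the $\jb{\Phi}^{-1}$ factor, which is precisely the smoothing-in-space needed for the subsequent nonlinear estimates on $\G_*$ to absorb the spatial derivative in the mKdV2 nonlinearity.
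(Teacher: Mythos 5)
Your proposal follows essentially the same route as the paper: the sign-function decomposition of $\int_0^t$ together with the scaling of $\eta(\Phi\,\cdot\,)$ and the identification of $\sgn$ with the Hilbert transform on the Fourier side to derive \eqref{KY}, then the pointwise bound $|\H f(\xi)|\les\jb{\xi}^{-1}$ for Schwartz $f$ (giving $\tfrac{1}{|\Phi|}|\H\ft{\eta}(\mu/\Phi)|\les\min(\jb{\Phi}^{-1},\jb{\mu}^{-1})$), the same two-term splitting of the $\mu$-integral estimated via Lemma~\ref{lm:convolution} and a Cauchy--Schwarz/localization argument, and the same elementary weight inequalities to consolidate into \eqref{estimateKY_new}. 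The only cosmetic difference is that the mean-value cancellation you invoke enters in proving the decay of the Hilbert transforms themselves (as in the paper's bound \eqref{hilbert}) rather than as a separate small-$\mu$ case of the kernel integral, but this does not change the argument.
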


\begin{proof}
	Since the relation between $|n_2|$ and $|n_3|$ will not play an important role in the proof, we will use $\G_{*}$ to denote both $\G_{*, \geq},\G_{*,>}$.
	We want to calculate the following
	\begin{multline*}
	\Ft_{t,x} \big( \G_*(u_1, u_2,u_3) \big) (\tau, n) \\
	= \sum_{\substack{n=n_1+n_2+n_3,\\ \nbar_{123} \in \X_*(n)}} in_1 \intt_\R \ft{\varphi} (\tau-\mu)  \Ft_t \bigg( \int_0^t e^{i(t-t')n^3} \eta\big( \Phi(\nbar_{123})(t-t')\big) F(t') \, dt' \bigg) (\mu) \, d\mu,
	\end{multline*}
	where $F(t) = \varphi(t) \prod_{j=1}^3 \ft{u}_j(t, n_j)$.
	Denote $\Phi(\nbar_{123})$ by $\Phi$, for simplicity. Proceeding as in the proof of Proposition~\ref{prop:KI}, we have
	\begin{align*}
	&\Ft_{t,x} \big(\G_* (u_1, u_2,u_3) \big)(\tau, n)  \\
	& = \sum_{\substack{n=n_1+n_2+n_3,\\ \nbar_{123} \in \X_*(n)}} n_1 \intt_\R \ft{\varphi}(\tau-\mu) \bigg[ \ft{F}(\mu) \frac{1}{\Phi} \H \ft{\eta} \bigg(\frac{\mu-n^3}{\Phi}\bigg) + \H \ft{F}(\mu) \frac{1}{\Phi} \ft{\eta} \bigg(\frac{\mu-n^3}{\Phi} \bigg) \bigg] d\mu
	\end{align*}
	Substituting $\ft{F}(\mu)$ and $\H\ft{F}(\mu)$, we obtain the intended expression.

	It remains to show the kernel estimate. First, note that for a Schwartz function $f$, we have 
	\begin{align*}
	|\H f(\xi)|\leq \lim_{\eps\to 0} \intt_{\eps<|\mu|<1}  \bigg| \frac{f(\xi-\mu) - f(\xi)}{\mu} \bigg| d \mu  + \bigg|  \lim_{\eps\to 0} \intt_{\eps<|\mu| <1} \frac{f(\xi)}{\mu} d\mu   \bigg| + \intt_{|\mu| \geq 1} \bigg|  \frac{f(\xi-\mu)}{\mu}  \Bigg| d\mu.
	\end{align*}
	Considering the first contribution, using mean value theorem and distinguishing the cases $|\xi|\les 1$ and $|\xi|\gg 1$ gives
	\begin{align*}
	\intt_{\eps<|\mu|<1} \frac{|f(\xi-\mu) - f(\xi)|}{|\mu|} d\mu 
	\les_{f,\al} \frac{1}{\jb{\xi}^\al},
	\end{align*}
	for any $\al>0$.
	The second contribution is equal to zero, so it only remains to control the third one. Using Lemma~\ref{lm:convolution}, it follows that
	\begin{align*}
	\intt_{|\mu| \geq 1}  \frac{|f(\xi-\mu)|}{|\mu|}  d\mu 
	  \les_{f, \al} \frac{1}{\jb{\xi}}. 
	\end{align*}
	Consequently, 
	\begin{equation}
	|\H f(\xi)| \les \frac{1}{\jb{\xi}}. \label{hilbert}
	\end{equation}
	Since $\ft{\varphi}$ is a Schwartz function, using \eqref{hilbert},
	\begin{align*}
	\frac{1}{\jb{\Phi}}\bigg| \H\ft{\eta} \bigg(\frac{\mu}{\Phi}\bigg)\bigg|  \les \frac{1}{\jb{\Phi}} \bigg(\1_{|\mu| \leq |\Phi|} + \1_{|\mu| \geq |\Phi|\geq 1} \frac{\jb{\Phi}}{\jb{\mu}} \bigg) \les \min\bigg(\frac{1}{\jb{\Phi}}, \frac{1}{\jb{\mu}} \bigg). 
	\end{align*}
	Now, considering the kernel and the estimates for $\H\ft{\eta}, \H\ft{\varphi}$, we have the following
	\begin{align*}
	|K_G(\tau, \ld, \Phi)| 
	& \les \intt_\R |\ft{\varphi}(\tau-\mu) \ft{\varphi}(\mu - \ld)| \min\bigg(\frac{1}{\jb{\Phi}}, \frac{1}{\jb{\mu}} \bigg) d\mu \\
	&\phantom{xx} + \intt_\R \frac{1}{\jb{\Phi}\jb{\mu-\ld}} \bigg|\ft{\varphi}(\tau-\mu) \ft{\eta}\bigg(\frac{\mu}{\Phi}\bigg)\bigg|  d\mu =: \I + \II.
	\end{align*}
	From the first contribution $\I$, from Lemma~\ref{lm:convolution} we have
	\begin{align*}
	 \I \les_{\varphi, \al} \intt_\R \frac{1}{\jb{\tau-\mu}^{\al+1} \jb{\mu-\ld}^{\al+1}} \min\bigg(\frac{1}{\jb{\Phi}}, \frac{1}{\jb{\mu}} \bigg) d\mu \les \frac{1}{\jb{\tau-\ld}^\al} \min\bigg( \frac{1}{\jb{\Phi}}, \frac{1}{\jb{\ld}}\bigg).
	\end{align*}

	For the second contribution $\II$, applying Lemma~\ref{lm:convolution} or Cauchy-Schwarz inequality give the following estimates
	\begin{align*}
	 \II& \les_{\varphi, \eta}  \intt_\R \frac{1}{\jb{\Phi} \jb{\mu-\ld} \jb{\tau-\mu}^{1+}} d\mu 
	 \les \frac{1}{\jb{\Phi} \jb{\tau-\ld}},\\
	 \II & \les_{ \varphi, \eta} \bigg(\intt_\R \frac{d\mu}{\jb{\tau-\mu}^{2} \jb{\mu}^{2}} \bigg)^\frac12 \bigg(\intt_\R \frac{d\mu}{\jb{\tau-\mu}^{2} \jb{\mu-\ld}^{2}} \bigg)^\frac12  \les \frac{1}{\jb{\tau} \jb{\tau - \ld}}.
	\end{align*}
	Consequently, $\II \les \frac{1}{\jb{\tau-\ld}} \min\bigg(\frac{1}{\jb{\Phi}}, \frac{1}{\jb{\tau}} \bigg)$,
	and the first estimate follows. For \eqref{estimateKY_new}, we consider different cases $\max(\jb{\Phi}, \jb{\ld}) \ges \max(\jb{\Phi}, \jb{\tau})$ or $\max(\jb{\Phi}, \jb{\ld}) \ll \max(\jb{\Phi}, \jb{\tau})$. Note that for the latter, $\max(\jb{\Phi}, \jb{\tau}) = \jb{\tau}$ and $\jb{\tau-\ld}\sim\jb{\tau}$. The estimate follows by choosing $\al \geq 2$.

\end{proof}
\begin{remark}\rm 
	For $*\in\{A,B\}$, consider the operators $\D  \mathcal{NR}_{*, \geq}(u_1,u_2,u_3)$ and $\G_{*,\geq}(u_1,u_2,u_3)$, and the kernel estimates in Propositions~\ref{prop:KI} and \ref{prop:KY}. Then, 
	\begin{align*}
	\big| \Ft_{t,x} \big(\D  \mathcal{NR}_{*, \geq} (u_1,u_2,u_3)\big) (\tau,n) \big| 
	 &\les \sum_{\substack{n=n_1+n_2+n_3, \\ \nbar_{123}\in\X_*(n), \\ |n_2| \geq |n_3| }} \intt_\R \frac{|n_1|}{\jb{\tau-\ld} \jb{\tau-n^3}} \ft{F}(\ld,\nbar_{123}) \, d\ld, \\
	\big| \Ft_{t,x} \big(\G_{*, \geq} (u_1,u_2,u_3) \big) (\tau,n) \big| 
	 &\les \sum_{\substack{n=n_1+n_2+n_3 ,\\ \nbar_{123}\in\X_*(n), \\ |n_2| \geq |n_3| }} \intt_\R \frac{|n_1|}{\jb{\tau-\ld}} \min\bigg( \frac{1}{\jb{\Phi(\nbar_{123})}}, \frac{1}{\jb{\tau-n^3}} \bigg) \\
	 & \phantom{XXXXXXXX}\times  \ft{F}(\ld,\nbar_{123}) \, d\ld,
	\end{align*}
	where $\ft{F}(\ld,\nbar_{123}) =   \big(|\ft{u}_1(\cdot,n_1)| \ast  |\ft{u}_2(\cdot,n_2)| \ast  |\ft{u}_3(\cdot,n_3)| \big)(\ld)$.
	Thus, for the modified Duhamel operators $\G_{*, \geq}$ we can `exchange' the gain in time derivatives through $\jb{\tau-n^3}$ for spatial derivatives through $\jb{\Phi(\nbar_{123})}$, unlike the usual Duhamel operator.
\end{remark}

Now we estimate the kernel of the remainder operators $\B_{*,\geq}, \B_{*,>}$, $*\in\{A,B\}$. The assumptions on $\eta$ in \eqref{eta} play an important role in establishing the following kernel estimates.
\begin{proposition}\label{prop:KX} \
	Let $*\in\{A,B\}$. Then, the convolution operators $\B_{*, \geq}, \B_{*,>}$ have the following space-time Fourier transform 
	\begin{multline*}
	\Ft_{t,x} \big( \B_{*, \underset{(>)}{\geq}} (u_1,u_2,u_3) \big)(\tau,n) 
	\\
	=  \sum_{\substack{n=n_1+n_2+n_3, \\ \nbar_{123}\in\X_*(n), \\|n_2| \underset{(>)}{\geq} |n_3|} } n_1 \intt_\R K_B(\tau-n^3, \ld - n^3, \Phi(\nbar_{123}))
	 \intt_{\ld = \tau_1 + \tau_2 + \tau_3} \prod_{j=1}^3 \ft{u}_j (\tau_j, n_j) \, d\tau_1  \, d\tau_2 \, d\ld,
	\end{multline*}
	where the kernel $K_B$ is given by 
	\begin{multline*}
	K_B (\tau, \ld, \Phi) = \intt_\R \frac{\ft{\varphi}(\tau - \mu) - \ft{\varphi}(\tau)}{\mu} \ft{\varphi}(\mu-\ld) \, d\mu - \intt_\R \ft{\varphi}(\tau-\mu) \ft{\varphi}(\mu-\ld) \frac1\Phi \H\ft{\eta}\Big(\frac{\mu}{\Phi}\Big)d\mu \\
	+ \intt_\R \ft{\varphi}(\tau - \mu) \H \ft{\varphi}(\mu-\ld) \frac1\Phi \ft{\eta}\bigg(\frac{\mu}{\Phi}\bigg) \, d\mu  ,
	\end{multline*}
	and satisfies the following estimate
	\begin{equation*}
	\begin{aligned}
	| K_B (\tau, \ld, \Phi) |  &\les_{\al, \varphi, \eta}  \frac{1}{\jb{\tau}^\al \jb{\ld}} + \frac{\jb{\ld + \Phi}}{\jb{\tau-\ld}^\al \jb{\ld}} \min\bigg( \frac{1}{\jb{\Phi}}, \frac{1}{\jb{\ld}}\bigg)  \\
	& \phantom{XXX}+ \frac{\jb{\tau+\Phi}}{\jb{\tau-\ld}} \min\bigg( \frac{1}{\jb{\Phi}}, \frac{1}{\jb{\tau}} \bigg)^2, 
	\end{aligned}
	\end{equation*}
	for any  $\al>0$.
	
\end{proposition}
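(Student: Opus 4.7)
The proof parallels those of Propositions~\ref{prop:KI} and~\ref{prop:KY}, with the novel ingredient being the cancellations afforded by the normalization conditions $\ft\eta(-1)=0$ and $\H\ft\eta(-1)=-1$ in~\eqref{eta}. The explicit formula for $K_B$ is obtained by replacing $\eta(\Phi(t-t'))$ with $1-\eta(\Phi(t-t'))$ in the computation of Proposition~\ref{prop:KY}; the ``$1$''-part produces the full Duhamel kernel of Proposition~\ref{prop:KI}, while the ``$-\eta$''-part produces (minus) the two integrals appearing in $K_G$. Applying the sgn-decomposition $\int_0^t = \tfrac12\int_\R(\sgn(t-t')+\sgn(t'))$ and computing the space-time Fourier transform as in those proofs yields the three displayed integrals, which I label $I_1$, $-I_2$, $I_3$.

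For the kernel estimate, I would first isolate from $I_1=\int \frac{\ft\varphi(\tau-\mu)-\ft\varphi(\tau)}{\mu}\ft\varphi(\mu-\ld)\,d\mu$ the piece $-\ft\varphi(\tau)\cdot\mathrm{p.v.}\!\int \frac{\ft\varphi(\mu-\ld)}{\mu}\,d\mu$. Using estimate~\eqref{hilbert} from the proof of Proposition~\ref{prop:KY} to control the Hilbert transform of $\ft\varphi(\cdot-\ld)$ at $0$, together with the Schwartz decay of $\ft\varphi$, this contribution is bounded by $\jb\tau^{-\al}\jb\ld^{-1}$, which gives the first summand of the claim. The remaining principal-value piece of $I_1$ is then paired with $-I_2$. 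The key identity is
\begin{equation*}
\frac{1}{\mu} - \frac{1}{\Phi}\H\ft\eta(\mu/\Phi) = \frac{\mu+\Phi}{\Phi}\,r_1(\mu,\Phi),
\end{equation*}
valid for $\Phi\neq 0$, where $r_1$ is smooth and rapidly decaying in $\mu/\Phi$; this follows from the vanishing of the left-hand side at $\mu=-\Phi$ (guaranteed by $\H\ft\eta(-1)=-1$) via Taylor expansion. Analogously, $\ft\eta(-1)=0$ yields $\frac{1}{\Phi}\ft\eta(\mu/\Phi) = \frac{\mu+\Phi}{\Phi}\,r_2(\mu,\Phi)$ with $r_2$ smooth, applicable to $I_3$. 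Each combined integrand thus acquires an extra factor of $|\mu+\Phi|/\jb\Phi$.

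It remains to convert these gains into the second and third summands of the bound. Using $|\mu+\Phi|\les \jb{\ld+\Phi} + |\mu-\ld|$ and $|\mu+\Phi|\les \jb{\tau+\Phi} + |\tau-\mu|$, the excess $|\mu+\Phi|$ is absorbed by the Schwartz decay of $\ft\varphi(\mu-\ld)$ or $\ft\varphi(\tau-\mu)$, respectively; the remaining integrals are of the same type handled in Propositions~\ref{prop:KI} and~\ref{prop:KY}. Splitting the $\mu$-integration into $|\mu|\le 1$ (handled via the mean value theorem, as in Proposition~\ref{prop:KI}) and $|\mu|>1$ (handled by Lemma~\ref{lm:convolution}), and applying the Cauchy--Schwarz variant used for $\II$ in the proof of Proposition~\ref{prop:KY} to recover the $\jb\tau^{-1}$ factors when needed, produces the second summand (with $\jb{\ld+\Phi}$, via the decay of $\ft\varphi(\mu-\ld)$ forcing $\mu\approx\ld$) and the third summand (with $\jb{\tau+\Phi}$, via the decay of $\ft\varphi(\tau-\mu)$ forcing $\mu\approx\tau$). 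The main obstacle is the bookkeeping: tracking which contribution produces the $\jb{\ld+\Phi}$ versus the $\jb{\tau+\Phi}$ factor, and verifying that all residual terms from the principal-value integration and the Taylor expansions fit into the stated bound.
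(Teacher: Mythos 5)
Your derivation of the formula and your overall architecture coincide with the paper's (write $\B_* = \D\mathcal{NR}_* - \G_*$ so that $K_B$ is the difference of the kernels of Propositions~\ref{prop:KI} and~\ref{prop:KY}, peel off the $\ft{\varphi}(\tau)$ counter-term for the first summand, and use $\H\ft{\eta}(-1)=-1$, $\ft{\eta}(-1)=0$ to gain a factor $\jb{\mu+\Phi}$ which the decay of $\ft{\varphi}(\mu-\ld)$, resp.\ $\ft{\varphi}(\tau-\mu)$, converts into $\jb{\ld+\Phi}$, resp.\ $\jb{\tau+\Phi}$). However, your ``key identity'' $\frac1\mu-\frac1\Phi\H\ft{\eta}(\mu/\Phi)=\frac{\mu+\Phi}{\Phi}\,r_1(\mu,\Phi)$ with $r_1$ smooth and rapidly decaying in $\mu/\Phi$ is not correct, and the error is not cosmetic. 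Literally read, no smooth $r_1$ exists: the left-hand side has a non-removable $1/\mu$ singularity at $\mu=0$ while $\frac{\mu+\Phi}{\Phi}\to 1$ there; and for $|\mu|\gg|\Phi|$ the left-hand side generically decays only like $1/|\mu|$, so $r_1$ decays only quadratically in $\mu/\Phi$. Read charitably ($r_1$ bounded with rapid decay), the resulting pointwise bound $\les \jb{\mu+\Phi}\jb{\Phi}^{-1}\jb{\mu/\Phi}^{-N}$ is quantitatively too weak: in the regime $1\ll|\mu|\sim|\ld|\ll|\Phi|$ it gives $O(1)$, whereas the true size of the bracket is $\sim\jb{\mu}^{-1}$, and after convolution against $\ft{\varphi}(\tau-\mu)\ft{\varphi}(\mu-\ld)$ your route would produce $\jb{\tau-\ld}^{-\al}$ instead of the required $\jb{\tau-\ld}^{-\al}\jb{\ld}^{-1}$, i.e.\ it misses the factor $\min(\jb{\Phi}^{-1},\jb{\ld}^{-1})$ in the second summand by a full power of $\jb{\ld}$. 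What the estimate actually needs, and what the paper proves (only on $|\mu|>1$, via the mean value theorem applied to $x\mapsto x\H\ft{\eta}(x)$ at $x=-1$), is the bound $|\frac1\mu-\frac1\Phi\H\ft{\eta}(\mu/\Phi)|\les \frac{\jb{\mu+\Phi}}{\jb{\mu}}\min\big(\frac{1}{\jb{\Phi}},\frac{1}{\jb{\mu}}\big)$, where the crucial $\jb{\mu}^{-1}$ comes from the explicit $1/\mu$ prefactor, not from rapid decay of a remainder. (Your analogous claim for the third integral is fine, since $\ft{\eta}(-1)=0$ and there is no $1/\mu$ there; it reproduces the paper's bound $|\frac1\Phi\ft{\eta}(\mu/\Phi)|\les\jb{\mu+\Phi}\min(\jb{\Phi}^{-1},\jb{\mu}^{-1})^2$.)

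There is also a bookkeeping inconsistency in your treatment of the small-$\mu$ region. Having split off $-\ft{\varphi}(\tau)\,\mathrm{p.v.}\!\int\frac{\ft{\varphi}(\mu-\ld)}{\mu}\,d\mu$ globally (which is fine and yields the first summand via \eqref{hilbert}), the leftover piece of $I_1$ on $|\mu|\le 1$ is $\mathrm{p.v.}\!\int_{|\mu|\le1}\frac{\ft{\varphi}(\tau-\mu)\ft{\varphi}(\mu-\ld)}{\mu}\,d\mu$, and there is no counter-term left to which ``the mean value theorem as in Proposition~\ref{prop:KI}'' can be applied. You must either keep the difference quotient on $|\mu|\le1$ and remove the $\ft{\varphi}(\tau)$ term only on $|\mu|>1$ (this is exactly the paper's splitting into $\I_1$ and $\I_5$), or reinstate a counter-term by subtracting the value of the smooth factor at $\mu=0$ and using that the symmetric principal value of $1/\mu$ vanishes. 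With these two repairs—restricting the cancellation estimate to $|\mu|>1$ in the corrected pointwise form, and treating $|\mu|\le1$ with a genuine difference quotient—your outline does reduce to the paper's argument.
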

\begin{proof}
	Let $*\in\{A,B\}$ and let $\B_*$ denote both $\B_{*, \geq}$ and $\B_{*,>}$. By definition, we have that $\B_* = \D   \mathcal{NR}_*  - \G_*$. 
	Using Propositions~\ref{prop:KI} and \ref{prop:KY}, we have that $\Ft_{t,x}(\B_*(u_1,u_2,u_3))$ has the intended formula, with the kernel $K_B$ defined as $K_B(\tau, \ld, \Phi)  = -i K(\tau, \ld) - K_G(\tau, \ld, \Phi)$. Using the expressions \eqref{K} and \eqref{KY} for the kernels, we obtain the following
	\begin{align*}
	& K_B(\tau, \ld, \Phi) \\
	 & \phantom{xx}= \intt_{|\mu| \leq 1} \big(\ft{\varphi}(\tau-\mu) - \ft{\varphi}(\tau) \big) \ft{\varphi}(\mu-\ld) \frac1\mu \, d\mu 
	- \intt_{|\mu|\leq1} \ft{\varphi}(\tau - \mu) \ft{\varphi}(\mu-\ld) \frac1\Phi \H\ft{\eta}\bigg(\frac{\mu}{\Phi}\bigg) \, d\mu \\
	& \phantom{xxx}+ \intt_{|\mu|>1} \ft{\varphi}(\tau - \mu) \ft{\varphi}(\mu-\ld) \bigg\{ \frac1\mu - \frac1\Phi \H\ft{\eta}\bigg(\frac{\mu}{\Phi}\bigg) \bigg\}\, d\mu 
	+ \intt_\R \ft{\varphi}(\tau - \mu) \H \ft{\varphi}(\mu-\ld) \frac1\Phi \ft{\eta}\bigg(\frac{\mu}{\Phi}\bigg) \, d\mu \\
	 &  \phantom{xxx}- \intt_{|\mu|>1} \ft{\varphi}(\tau) \ft{\varphi}(\mu-\ld) \frac1\mu \, d\mu \\
	 &  \phantom{xx}=: \I_1 + \I_2 + \I_3 + \I_4 + \I_5.
	\end{align*}
	For $\I_1$, using mean value theorem gives
	\begin{align*}
	|\I_1| & \les \intt_{|\mu| \leq 1} \1_{|\tau| \les 1} \frac{d\mu }{\jb{\tau}^\al \jb{\mu}^\al \jb{\mu-\ld}^{1+\al}}  + \intt_{|\mu| \leq 1} \1_{|\tau| \gg 1} \frac{d\mu }{\jb{\tau}^\al \jb{\mu}^\al \jb{\mu-\ld}^{1+\al}} 
	 \les \frac{1}{\jb{\tau}^\al \jb{\ld}^\al},
	\end{align*} 
	for any $\al>0$.
	
	For the following contribution, using \eqref{hilbert} and Cauchy-Schwarz inequality gives
	\begin{align*}
	|\I_2| 
	& \les \intt_{|\mu|\leq 1} \frac{d\mu }{\jb{\Phi} \jb{\tau-\mu}^{\al} \jb{\mu-\ld}^{\al+1} \jb{\mu}^{\al}} 
	 \les \frac{1}{\jb{\Phi} \jb{\tau-\ld}^\al \jb{\ld}^\al}. 
	\end{align*}
	
	Before estimating $\I_3$, 
	note that since $\H\ft{\eta}(-1) = -1$ and using mean value theorem, we get
	\begin{align*}
	\bigg| \frac1\mu - \frac1\Phi \H\ft{\eta}\bigg(\frac{\mu}{\Phi}\bigg) \bigg| & \sim \frac{1}{\jb{\mu}} \bigg| \H\ft{\eta}(-1) - \frac{\mu}{\Phi} \H \ft{\eta} \bigg(\frac{\mu}{\Phi}\bigg)\bigg| \\
	& \les_{\eta} \1_{\jb{\Phi} \ges \jb{\mu}} \frac{1}{\jb{\mu}} \bigg| -1 - \frac{\mu}{\Phi}\bigg|  
	+  \1_{\jb{\Phi} \ll \jb{\mu}} \frac{\jb{\mu+\Phi}}{\jb{\mu}^2} \\
	& \les \frac{\jb{\mu+\Phi}}{\jb{\mu}} \min\bigg( \frac{1}{\jb{\Phi}}, \frac{1}{\jb{\mu}} \bigg).
	\end{align*}
	Using the above estimate, it follows from previous arguments that 
	\begin{align*}
	|\I_3| 
	& \les \1_{\jb{\Phi}\ges \jb{\tau}} \frac{\jb{\ld + \Phi}}{\jb{\tau-\ld}^\al \jb{\ld} \jb{\Phi}}+  \1_{\jb{\Phi}\ll \jb{\tau}} \frac{\jb{\ld+\Phi}}{\jb{\tau-\ld}^\al \jb{\ld}^2} 
	 \les \frac{\jb{\ld + \Phi}}{\jb{\tau-\ld}^\al \jb{\ld}} \min\bigg( \frac{1}{\jb{\Phi}}, \frac{1}{\jb{\ld}}\bigg),
	\end{align*}
	for any $\al>0$.
	
	In order to estimate $\I_4$, we start by showing a bound for $\frac{1}{\Phi} \ft{\eta} \big(\frac{\mu}{\Phi}\big)$. If $|\Phi| \ges |\mu|$, we use the fact that $\ft{\eta}(-1) =0$ and mean value theorem. Otherwise, $|\Phi| \ll |\mu| \implies \jb{\mu - \Phi} \sim \jb{\mu}$. It follows that
	\begin{align*}
	\bigg| \frac{1}{\Phi} \ft{\eta} \bigg(\frac{\mu}{\Phi}\bigg) \bigg| & \les \1_{|\Phi| \ges |\mu|} \frac{1}{\jb{\Phi}} \bigg| \ft{\eta}\bigg(\frac{\mu}{\Phi}\bigg) - \ft{\eta}(-1) \bigg| + \1_{|\Phi| \ll |\mu|} \frac{1}{\jb{\mu}} \bigg| \frac{\mu}{\Phi} \ft{\eta} \bigg(\frac{\mu}{\Phi} \bigg) \bigg| \\
	& \les _{\eta}\jb{\mu+\Phi}\min \bigg( \frac{1}{\jb{\Phi}}, \frac{1}{\jb{\mu}}\bigg)^2.
	\end{align*}
	Using the above estimate and the fact that $|\H\ft{\varphi}(\mu-\ld)| \les \jb{\mu-\ld}^{-1}$ in \eqref{hilbert}, we have
	\begin{align*}
	|\I_4| 
	& \les \1_{\jb{\Phi}\ges \jb{\tau}} \frac{\jb{\tau + \Phi}}{\jb{\Phi}^2 \jb{\tau-\ld}} + \1_{\jb{\Phi}\ll \jb{\tau}} \frac{\jb{\tau+\Phi}}{\jb{\tau-\ld} \jb{\tau}^2} 
	\les \frac{\jb{\tau+\Phi}}{\jb{\tau-\ld}} \min\bigg( \frac{1}{\jb{\Phi}}, \frac{1}{\jb{\tau}} \bigg)^2.
	\end{align*}
	
	For the last contribution, we have
	\begin{align*}
	|\I_5| & \les  \intt_{|\mu|>1} \frac{\jb{\tau}^{\al} |\ft{\varphi}(\tau)|\jb{\mu-\ld}| \ft{\varphi}(\mu-\ld)|}{ \jb{\tau}^{\al} \jb{\mu-\ld} \jb{\mu}} \ d\mu 
	\les \frac{1}{\jb{\tau}^\al \jb{\ld}},
	\end{align*}
	for any $\al>0$.
	The estimate follows.

\end{proof}

We want to further split the operators $\B_{*,\geq}, \B_{*,>}$, $*\in\{A,B\}$, to obtain better kernel estimates. 
We will split the kernel $K_B$ in two pieces: when we can estimate the multiplier directly, and when we also have to use $\s_{\max} = \max_{j=1,2,3} |\tau_j - n_j^3|$. Let $K_B = K_{0} + K_{+}$ where the kernels are defined below
\begin{align*}
K_{0}(\tau, \ld, \Phi) & = \1_{\jb{\ld} \ges \jb{\Phi}} \bigg( \intt_{|\mu| \leq 1} \big(\ft{\varphi}(\tau-\mu) - \ft{\varphi}(\tau) \big) \ft{\varphi}(\mu-\ld) \frac1\mu \ d\mu - \intt_{|\mu|>1} \ft{\varphi}(\tau) \ft{\varphi}(\mu-\ld) \frac1\mu \ d\mu \bigg) \\
& + \1_{\jb{\ld +\Phi} \les \jb{\tau-\ld}} \bigg(\intt_{|\mu|>1} \ft{\varphi}(\tau - \mu) \ft{\varphi}(\mu-\ld) \bigg\{ \frac1\mu - \frac1\Phi \H\ft{\eta}\bigg(\frac{\mu}{\Phi}\bigg) \bigg\}d\mu\bigg)\\
& + \1_{\jb{\tau+\Phi} \les \jb{\tau-\ld}} \bigg(\intt_\R \ft{\varphi}(\tau - \mu) \H \ft{\varphi}(\mu-\ld) \frac1\Phi \ft{\eta}\bigg(\frac{\mu}{\Phi}\bigg) \ d\mu\bigg), \\
K_{+} (\tau, \ld, \Phi)& = \1_{\jb{\ld} \ll \jb{\Phi}} \bigg( \intt_{|\mu| \leq 1} \big(\ft{\varphi}(\tau-\mu) - \ft{\varphi}(\tau) \big) \ft{\varphi}(\mu-\ld) \frac1\mu \ d\mu - \intt_{|\mu|>1} \ft{\varphi}(\tau) \ft{\varphi}(\mu-\ld) \frac1\mu \ d\mu \bigg) \\
& + \1_{\jb{\ld + \Phi} \gg \jb{\tau-\ld}}\intt_{|\mu|>1} \ft{\varphi}(\tau - \mu) \ft{\varphi}(\mu-\ld) \bigg\{ \frac1\mu - \frac1\Phi \H\ft{\eta}\bigg(\frac{\mu}{\Phi}\bigg) \bigg\}d\mu\\
& + \1_{\jb{\tau + \Phi} \gg \jb{\tau-\ld}} \bigg(\intt_\R \ft{\varphi}(\tau - \mu) \H \ft{\varphi}(\mu-\ld) \frac1\Phi \ft{\eta}\bigg(\frac{\mu}{\Phi}\bigg) \ d\mu\bigg).
\end{align*}
Thus, we have the following estimates for the kernels, for any $0\leq \al \leq 1$,
\begin{align}
|K_{0}(\tau, \ld, \Phi)| & \les \frac{1}{\jb{\tau-n^3}^{1+\al} \jb{\Phi}^{1-\al}} , \label{estimateK0}\\
|K_{+}(\tau, \ld, \Phi)| & \les \1_{\jb{\ld} \ll \jb{\Phi}} \frac{1}{\jb{\tau-\ld} \jb{\tau}} +  \frac{\jb{\ld+\Phi}^{1-\al}}{\jb{\tau-\ld} \jb{\tau}} \min\bigg( \frac{1}{\jb{\Phi}} ,  \frac{1}{\jb{\tau}} \bigg)^{1-\al}.\label{estimateK+}
\end{align}
Considering the contribution corresponding to the kernel $K_0$ in $\B_{*, \geq}, \B_{*, >}$, it can be easily estimated. However, in order to estimate the one corresponding to $K_+$, we will have to use the  largest modulation. In particular, using \eqref{estimateK+}, we see that 
\begin{equation}
\big|K_{+}\big(\tau-n^3, \ld - n^3, \Phi(\nbar_{123})\big) \big| \les \frac{\jb{\s_{\max}}^{1-\beta}}{\jb{\tau-\ld} \jb{\tau-n^3} \jb{\Phi(\nbar_{123})}^{1-\al}}, \label{KX+}
\end{equation}
for any $0\leq \al \leq 1$, since
$\ld = \tau_1 + \tau_2 + \tau_3$ and
$$|\ld - n^3 + \Phi(\nbar_{123})| = |\tau_1 - n_1^3 + \tau_2 - n_2^3 + \tau_3 - n_3^3| \les \max\limits_{j=1,2,3} |\tau_j - n_j^3| = \s_{\max}.$$
Thus, we can use $\s_{\max}$ in order to estimate the numerator of the second contribution in \eqref{estimateK+}, which motivates splitting the operators depending on which modulation is the largest. In particular, we have
\begin{align}
\B_{*, \underset{(>)}{\geq}} &=  \B^{0}_{*,\underset{(>)}{\geq}} + \B^{1}_{*,\underset{(>)}{\geq}} + \B^{2}_{*,\underset{(>)}{\geq}} + \B^{3}_{*, \underset{(>)}{\geq}}, \label{EXJ}
\end{align}
where $ \B^{0}_{*,\underset{(>)}{\geq}}$ has kernel $K_{0}$ and $\B^{j}_{*,\underset{(>)}{\geq}}$ has kernel $K_{+}$ localized to the region where ${\s_{\max} = |\s_j|}$, $j=1,2,3$.

\section{System of equations}\label{sec:equations}
Instead of running a contraction mapping argument on the integral equation \eqref{duhamel}, we will solve an ordered system of equations. The first equation imposes structure on $u$ centered around a smoother-in-time function $w$, while the second guarantees that $u$ solves the Duhamel formulation. In order to obtain a solution of mKdV2 \eqref{renorm2}, we start by solving the equation for $u$ obtaining a function parameterized by $w$, $u=u[w]$. Then, we apply a \emph{partial} second iteration argument, using the  additional structure of $u$, in order to obtain an appropriate center $w$.

In this section, we establish the relevant equations for $u$ and $w$ and the main results needed to show Theorem~\ref{th:lwp}. For a fixed $p$ in $2\leq p <\infty$, we will only focus on showing local well-posedness in $\FL^{\frac12,p}(\T)$. The result for $s>\frac12$ follows from a persistence of regularity argument. See Remark~\ref{rm:persistence} for more details.

For a fixed $w\in Z_0$, we consider the following equation for $u$
\begin{equation}
u = w + \varphi_T \big[ \G_{A, \geq}(w,\conj{u},u) +  \G_{A, >}(w,u,\conj{u}) +   \G_{B, \geq}(w,\conj{w},u) +  \G_{B, >}(w,w, \conj{u}) \big]. \label{eq:u} 
\end{equation}
We start by showing that the equation \eqref{eq:u} is locally well-posed via a fixed point argument, obtaining $u=u[w]$, i.e., $u$ parameterized by $w$.
\begin{proposition}\label{prop:u}
	For any $w\in Z_0$ satisfying $\|w\|_{Z_0} \leq A_2$, there exists a unique $u\in Y_0$ with $\|u\|_{Y_0} \leq A_3$ satisfying \eqref{eq:u}, for some $T=T(A_2)>0$. The mapping $w \mapsto u[w]$ is Lipschitz from the $A_2$-ball of $Z_0$ to the $A_3$-ball of $Y_0$.
\end{proposition}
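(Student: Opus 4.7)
\emph{Plan.} I would solve \eqref{eq:u} by Banach's fixed point theorem in the closed ball
\[
B_{A_3} = \{u\in Y_0 : \|u\|_{Y_0}\leq A_3\},
\]
where $A_3$ is a constant multiple of $A_2$ to be chosen below. Denote the right-hand side of \eqref{eq:u} by $\Gamma_w(u)$. The first step is the embedding $Z_0 \hookrightarrow Y_0$, which follows by H\"older in $\tau$ from $b_0 > \tfrac12$ and the chosen relations among $b_0, q_0, r_0$; this gives $\|w\|_{Y_0} \les \|w\|_{Z_0} \leq A_2$, so take $A_3 = 2C_0 A_2$ with $C_0$ the corresponding embedding constant.

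The heart of the argument is a family of trilinear estimates of the form
\[
\|\G_{A,\geq}(w,\conj{u}_1,u_2)\|_{Y_1} \les \|w\|_{Z_0}\|u_1\|_{Y_0}\|u_2\|_{Y_0},
\]
together with the evident analogue for $\G_{A,>}$, and bilinear-in-$w$ analogues for $\G_{B,\geq}(w,\conj{w},u)$ and $\G_{B,>}(w,w,\conj{u})$. These bounds---precisely the content I would allocate to Section~\ref{sec:u}---rest on the kernel estimate of Proposition~\ref{prop:KY}, which contributes a factor $\jb{\Phi(\nbar_{123})}^{-1}$. Combined with the lower bound $|\Phi(\nbar_{123})| \ges \max(|n_1|,|n_2|)^2$ valid on $\X_A(n) \cup \X_B(n)$, this factor absorbs the derivative $|n_1|$ coming from the nonlinearity. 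The deliberate structural choice of placing $w$ in the slot carrying that derivative is crucial: the stronger $Z_0$-norm (with weight $b_0 > \tfrac12$) is available to estimate the highest frequency, while the weaker $Y_0$-norm suffices for the remaining slots.

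Granting these trilinear estimates, Lemma~\ref{lm:T} (applicable since each $\G_{*,\cdot}(\cdots)$ vanishes at $t=0$) yields
\[
\|\varphi_T\cdot \G_{*,\cdot}(w,\bullet,\bullet)\|_{Y_0} \les T^\theta \|\G_{*,\cdot}(w,\bullet,\bullet)\|_{Y_1},
\]
so that
\[
\|\Gamma_w(u)\|_{Y_0} \leq \tfrac{A_3}{2} + C T^\theta \bigl( A_2 A_3^2 + A_2^2 A_3 \bigr).
\]
Choosing $T = T(A_2)$ small forces $\Gamma_w$ to map $B_{A_3}$ into itself. The same multilinear estimates applied to $\Gamma_w(u_1) - \Gamma_w(u_2)$, which decomposes into trilinear expressions with one slot occupied by $u_1 - u_2$, yield a strict contraction after possibly shrinking $T$. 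Banach's fixed point theorem then produces the unique $u = u[w] \in B_{A_3}$ solving \eqref{eq:u}. For the Lipschitz dependence, writing $v = u[w_1] - u[w_2]$ and subtracting the two copies of \eqref{eq:u} gives a linear equation in $v$ whose coefficient is contraction-small (by the same $T^\theta$ gain), plus a source controlled by $\|w_1 - w_2\|_{Z_0}$; absorbing the $v$-dependent part on the left yields $\|v\|_{Y_0} \les \|w_1 - w_2\|_{Z_0}$.

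\emph{Main obstacle.} The routine contraction mechanics are not the real issue: the entire difficulty is concentrated in the trilinear $Y_1$-estimates for $\G_{A,\cdot}$ and $\G_{B,\cdot}$. Because $\X_A$ allows arbitrarily large $|n_1|$ with $|n_2|,|n_3|\ll |n_1|$ and $p$ may exceed $4$, the standard Bourgain-type approach that relies on the modulations breaks down. The Deng--Nahmod--Yue kernel smoothing via $\jb{\Phi}^{-1}$ precisely replaces the missing gain, and the ensuing $\tau$-convolution analysis combined with the divisor count of Lemma~\ref{lm:divisor} is where the genuine technical work lies.
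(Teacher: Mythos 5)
Your contraction framework coincides with the paper's: Proposition~\ref{prop:u} is obtained there by exactly the fixed point argument you outline in $Y_0$, with the embedding $Z_0\subset Y_0$, Lemma~\ref{lm:T} supplying the factor $T^\theta$, and the Lipschitz dependence on $w$ following by subtracting the two equations and absorbing the $u[w_1]-u[w_2]$ terms, none of which the paper even spells out because it is routine. The genuine gap is that everything you declare ``granted'' is the actual content of the paper's proof: Proposition~\ref{prop:u} rests on Lemma~\ref{lm:EY}, i.e.\ the estimates $\|\G_A(u_1,u_2,u_3)\|_{Y_1}\les\|u_1\|_{Z_0}\|u_2\|_{Y_0}\|u_3\|_{Y_0}$ and $\|\G_B(u_1,u_2,u_3)\|_{Y_1}\les\|u_1\|_{Z_0}\|u_2\|_{Z_0}\|u_3\|_{Y_0}$, and your proposal only names the ingredients of their proof without carrying it out. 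Since the whole point of the proposition is that these bounds hold for every $2\le p<\infty$ (this is where the $p<4$ obstruction of the earlier $X^{s,b}$ approach is bypassed), a proof attempt that assumes them proves nothing new.

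Two points in your sketch also need correction or completion. First, the kernel bound \eqref{estimateKY_new} is a minimum, $\jb{\tau-\ld}^{-1}\min\big(\jb{\Phi}^{-1},\jb{\tau-n^3}^{-1}\big)$, not an outright factor $\jb{\Phi}^{-1}$; after paying the $\jb{\tau-n^3}^{1/2}$ weight of $Y_1$ only $\jb{\Phi}^{-1/2}$ remains, which still suffices because $|\Phi(\nbar_{123})|^{1/2}\ges\max(|n_1|,|n_2|)$ reduces the multiplier $\jb{n}^{1/2}|n_1|$ to $\jb{n_1}^{1/2}$, to be absorbed by the $Z_0$ norm of the first factor. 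Second, the ``$\tau$-convolution analysis'' is where the argument actually lives: one changes variables to the modulations $\sigma_j=\tau_j-n_j^3$ so that $\jb{\tau-\ld}^{-1}$ becomes a convolution kernel in the value $\mu=\Phi(\nbar_{123})$ of the resonance function, applies Schur's test in $\mu$ uniformly in $n$ and the $\sigma_j$, and then counts lattice points on the level set $\Phi=\mu$; in the region $\X_A(n)$ the standard divisor bound is not applicable as such, because the relevant divisors $n-n_2$, $n-n_3$ of $\mu$ lie in intervals of length $N_2,N_3$ around the large frequency $n$, so the refined interval-divisor count of Lemma~\ref{lm:divisor} is required, and the resulting $(N_2N_3)^{\eps}$ loss must be offset by the $\jb{n}^{\delta}$ weights and the H\"older/Minkowski bookkeeping (using $r_2<2\le p$) that finally places $u_1$ in $Z_0$ and $u_2,u_3$ in $Y_0$. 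Until these steps are executed, the proposal is an outline of the paper's proof rather than a proof.
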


Now, we want to establish an equation for $w$ such that if $u=u[w]$ satisfies \eqref{eq:u}, then it also satisfies the Duhamel formulation \eqref{duhamel_truncated}. 
We see that $w$ must satisfy the following equation
\begin{equation}
\begin{aligned}
w 
& = \varphi(t) S(t) u_0 + \varphi_T \cdot \mathcal{IR}(u, u, u) \\
&  \phantom{XXXX}+   \varphi_T \cdot  \D  \mathcal{NR}_{C, \geq}(u,\conj{u}, u) +   \varphi_T \cdot  \D  \mathcal{NR}_{C, >}(u,u, \conj{u}) \\
&  \phantom{XXXX}+   \varphi_T \cdot \D  \mathcal{NR}_{D, \geq}(u,\conj{u},u) +   \varphi_T \cdot \D  \mathcal{NR}_{D, >}(u,u, \conj{u})\\
& \phantom{XXXX} +   \varphi_T\big[ \B_{A, \geq}(w,\conj{u},u) + \B_{A, >}(w,u, \conj{u})  + \B_{B, \geq}(w,\conj{w}, u) + \B_{B, >}(w,w,\conj{u}) \big] \\
&  \phantom{XXXX} +   \varphi_T \big[ \D  \mathcal{NR}_{A, \geq} (u,\conj{u},u) - \D  \mathcal{NR}_{A, \geq} (w,\conj{u},u) \big] \\
& \phantom{XXXX}+ \varphi_T \big[ \D  \mathcal{NR}_{A, >} (u,u,\conj{u}) - \D  \mathcal{NR}_{A, >} (w,u,\conj{u}) \big] \\
& \phantom{XXXX} +   \varphi_T \big[ \D  \mathcal{NR}_{B, \geq} (u,\conj{u}, u) - \D  \mathcal{NR}_{B, \geq} (w,\conj{w}, u) \big] \\
& \phantom{XXXX} + \varphi_T \big[ \D  \mathcal{NR}_{B, >} (u,u, \conj{u}) - \D  \mathcal{NR}_{B, >} (w,w, \conj{u})  \big] .
\end{aligned}\label{w_first}
\end{equation}
The equation \eqref{eq:u} imposes structure on $u$, which we want to exploit in order to modify the above equation for $w$. We want to use the idea introduced by Bourgain in \cite{BO97} and apply a partial second iteration by replacing $u$ by its equation \eqref{eq:u} where appropriate. The decomposition on the operators $\D  \mathcal{NR}$ and $\B$, introduced in Sections \ref{sec:preliminaries} and \ref{sec:kernel}, explicitly identify which entries have the largest frequencies and the largest modulations, respectively. This information will guide how we apply second iteration. 

For the terms $\D  \mathcal{NR}_{*, \geq}, \D  \mathcal{NR}_{*,>}$, $*\in\{A,B,C,D\}$, we replace the equation for $u$ \eqref{eq:u} from left to right to obtain only cubic and quintic terms, as in the following example
\begin{align*}
\D  \mathcal{NR}_{C,\geq}(u,\conj{u}, u) & = \D  \mathcal{NR}_{C, \geq} (w,\conj{u}, u) \\
& + \D  \mathcal{NR}_{C, \geq} \big( \varphi_T \cdot \G_{A,\geq} [w,\conj{u}, u] , \conj{u}, u\big) +   \D  \mathcal{NR}_{C, \geq} \big( \varphi_T \cdot \G_{A,>} [w,u,\conj{u}] , \conj{u}, u\big) \\
&+  \D  \mathcal{NR}_{C, \geq} \big( \varphi_T \cdot \G_{B,\geq} [w,\conj{w}, u] , \conj{u}, u\big) + \D  \mathcal{NR}_{C, \geq} \big( \varphi_T \cdot \G_{B,>} [w,w, \conj{u}] , \conj{u}, u\big), \\
\D  \mathcal{NR}_{C, \geq} (w,\conj{u}, u)  & =\D  \mathcal{NR}_{C, \geq} (w,\conj{w}, u)  \\
& + \D  \mathcal{NR}_{C, \geq} \big( w, \conj{\varphi_T \cdot \G_{A,\geq} [w,\conj{u}, u] },u\big) +   \D  \mathcal{NR}_{C, \geq} \big( w, \conj{\varphi_T \cdot \G_{A,>} [w,u,\conj{u}] }, u\big) \\
&+  \D  \mathcal{NR}_{C, \geq} \big( w, \conj{\varphi_T \cdot \G_{B,\geq} [w,\conj{w}, u]} , u\big) + \D  \mathcal{NR}_{C, \geq} \big( w, \conj{\varphi_T \cdot \G_{B,>} [w,w, \conj{u}] }, u\big),\\
\D  \mathcal{NR}_{C, \geq} (w,\conj{w}, u) & = \D  \mathcal{NR}_{C, \geq} (w,\conj{w}, w) \\
&+  \D  \mathcal{NR}_{C, \geq} \big( w, \conj{w}, \varphi_T \cdot \G_{A,\geq} [w,\conj{u}, u] \big) + \D  \mathcal{NR}_{C, \geq} \big( w, \conj{w}, \varphi_T \cdot \G_{A,>} [w,u, \conj{u}] \big), \\
&+  \D  \mathcal{NR}_{C, \geq} \big( w, \conj{w}, \varphi_T \cdot \G_{B,\geq} [w,\conj{w}, u] \big) + \D  \mathcal{NR}_{C, \geq} \big( w, \conj{w}, \varphi_T \cdot \G_{B,>} [w,w, \conj{u}] \big).
\end{align*}
 This strategy prioritizes the entry corresponding to the derivative followed by the one with the largest frequency between the remaining two factors. For $\D  \mathcal{NR}_{*,\geq}, \D  \mathcal{NR}_{*, >}$ with $*\in\{A,B\}$ there will be no cubic terms after second iteration, due to the differences in \eqref{w_first}.

For the terms $\B_{*,\geq}, \B_{*, >}$, with $*\in\{A,B\}$, we split the operators into four pieces $\B^{j}_{*,\geq}, \B^{j}_{*,>}$, $j=0,1,2,3$ as defined in \eqref{EXJ}. The contributions corresponding to $j=0$ are easily estimated, but for $j=1,2,3$ the largest modulation plays an important role in estimating the kernel. If the $j-$th entry corresponds to a $u$ or $\conj{u}$ term, we replace it with the equation for $u$ \eqref{eq:u}.
For example, we have
\begin{align*}
\B_{B, \geq}(w, \conj{w}, u) & = \B^{0}_{B, \geq} (w, \conj{w}, w) + \B^{1}_{B,\geq} (w,\conj{w}, u)  \\
&+ \B^{2}_{B, \geq} (w,\conj{w}, u) + \B^{3}_{B, \geq}(w,\conj{w},w) \\
&  +   \B^{3}_{B, \geq} \big(w, \conj{w}, \varphi_T\cdot \G_{A, \geq}[w, \conj{u},u]\big) +  \B^{3}_{B, \geq} \big(w, \conj{w}, \varphi_T\cdot \G_{A, >}[w, u,\conj{u}]\big)\\
&+   \B^{3}_{B, \geq} \big(w, \conj{u}, \varphi_T\cdot \G_{B, \geq}[w, \conj{w},u]\big) + \B^{3}_{B, \geq} \big(w, \conj{u}, \varphi_T\cdot \G_{B, >}[w, w,\conj{u}]\big).
\end{align*}

Proceeding as detailed above, we obtain a new equation for $w$. Due to its length, we have decided to only include it in Appendix~\ref{app:w}.

\begin{proposition}\label{prop:w}
	For any $u_0\in \FL^{\frac12,p}(\T)$ satisfying $\|u_0\|_{\FL^{\frac12,p}} \leq A_1$, there exists a unique $w\in Z_0$ with $\|w\|_{Z_0} \leq A_2$ satisfying \eqref{w_first}, for some $T=T(A_1,A_2,A_3)>0$. The mapping $u_0 \mapsto w$ is Lipschitz from the $A_1$-ball of $\FL^{\frac12,p}(\T)$ to the $A_2$-ball of $Z_0$.
\end{proposition}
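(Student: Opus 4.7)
The plan is to solve the second-iterated form of \eqref{w_first} (recorded in Appendix~\ref{app:w}) by a contraction mapping argument in the closed ball $\mathcal{B}_{A_2} := \{w \in Z_0 : \|w\|_{Z_0} \leq A_2\}$. For any $w \in \mathcal{B}_{A_2}$, Proposition~\ref{prop:u} furnishes $u = u[w] \in Y_0$ with $\|u[w]\|_{Y_0} \leq A_3$ and with $w \mapsto u[w]$ Lipschitz, so the task reduces to bounding every piece of the right-hand side in the $Z_0$ norm in terms of $\|u_0\|_{\FL^{\frac12,p}}$, $A_2$, and $A_3$, with a small positive power of $T$ extracted from each nonlinear contribution via Lemma~\ref{lm:T}.

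I would group the terms by type. The linear piece $\varphi(t) S(t) u_0$ is controlled in $Z_0$ by $\|u_0\|_{\FL^{\frac12,p}}$ through the standard linear $X^{s,b}$-estimate. The cubic and higher-order descendants living in the $\X_C$ and $\X_D$ regions are handled by the classical Fourier-restriction approach adapted to the $X^{s,b}_{p,q}$ scale: in $\X_C$ one has $|\Phi| \gtrsim \max(|n_1|,|n_2|)^2$, which comfortably absorbs the derivative $|n_1|$ together with the weight $\jb{n}^{\frac12}$; in $\X_D$ the weaker bound $|\Phi| \gtrsim |n_2|$ is compensated by the pointwise inequality $\jb{n}^{\frac12}|n_1| \lesssim (\jb{n_1}\jb{n_2}\jb{n_3})^{\frac12}$, so the resonance plays only a noncritical role. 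The same estimates, with trivial multilinear bookkeeping, extend to the quintic descendants in which one slot has been replaced by $\varphi_T \cdot \G_{A/B,(\geq,>)}[w,\cdot,\cdot]$ following second iteration.

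The main obstacle is to close the estimates on the $\B$-contributions. Using the splitting \eqref{EXJ}, the $\B^{0}$ piece is handled directly: the kernel bound \eqref{estimateK0} provides simultaneous decay in $\jb{\tau - n^3}$ and $\jb{\Phi}$ which, for a suitable $\alpha \in (0,1)$, is enough to absorb the derivative loss and close the estimate in $Z_1$ (hence in $Z_0$ after Lemma~\ref{lm:T}). For $\B^{j}$ with $j = 1,2,3$, the kernel estimate \eqref{estimateK+}--\eqref{KX+} only supplies $\jb{\tau - n^3}^{-1}$, forcing one to exploit the maximum input modulation $\sigma_{\max}$. This is exactly where the partial second iteration pays off: by construction the slot carrying $\sigma_{\max}$ has the form $\varphi_T \cdot \G_{*,(\geq,>)}[w,\cdot,\cdot]$, and Proposition~\ref{prop:KY} exchanges the needed factor $\sigma_j^{-1}$ for decay in the inner resonance $\Phi_{\mathrm{inner}}$, unlocking the spatial smoothing that is otherwise unavailable. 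The resulting five-fold sums with two nested resonance constraints would then be controlled by H\"older in $\tau$, Cauchy--Schwarz in frequency, and the divisor estimate Lemma~\ref{lm:divisor} applied to the factorisation $\Phi(\nbar_{123}) = 3(n_1+n_2)(n_1+n_3)(n_2+n_3)$; this is the step I expect to be the most delicate.

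Granted the multilinear estimates above, the right-hand side of the iterated equation defines a map $\Psi_{u_0} : \mathcal{B}_{A_2} \to Z_0$ satisfying
\begin{equation*}
\|\Psi_{u_0}(w)\|_{Z_0} \leq C_0 \|u_0\|_{\FL^{\frac12,p}} + C_1 T^{\theta} P(A_2, A_3),
\end{equation*}
together with a companion contraction bound
\begin{equation*}
\|\Psi_{u_0}(w_1) - \Psi_{u_0}(w_2)\|_{Z_0} \leq C_1 T^{\theta} Q(A_2, A_3) \|w_1 - w_2\|_{Z_0},
\end{equation*}
where the Lipschitz statement in Proposition~\ref{prop:u} is used to handle the slots occupied by $u[w]$. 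Choosing first $A_2 = 2 C_0 A_1$ and then $T = T(A_1, A_2, A_3) > 0$ small enough that $C_1 T^{\theta} P(A_2, A_3) \leq A_2/2$ and $C_1 T^{\theta} Q(A_2, A_3) \leq \tfrac12$, Banach's fixed point theorem yields the desired unique $w \in \mathcal{B}_{A_2}$ solving \eqref{w_first}; Lipschitz dependence of $w$ on $u_0$ follows by applying the same estimates to the difference of the fixed-point problems associated to two initial data.
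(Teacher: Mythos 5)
Your overall architecture is the paper's: contract in the $A_2$-ball of $Z_0$, insert $u=u[w]$ from Proposition~\ref{prop:u}, gain $T^\theta$ from Lemma~\ref{lm:T}, and close via the linear bound plus the cubic estimates for $\mathcal{IR}$, $\D\mathcal{NR}_{C}$, $\D\mathcal{NR}_{D}$, $\B^{0}_*$ and $\B^{j}_*$. The gap is the sentence claiming that the quintic descendants follow from ``the same estimates, with trivial multilinear bookkeeping'': that is precisely where the proof of Proposition~\ref{prop:w} lives. The standard quintic terms \eqref{quintic_terms} involve new cross-generation pairings $n_i+n_j=0$, and their estimate (Proposition~\ref{standard_quintic}) is a separate multilinear argument with dyadic case splits and divisor counting via Lemma~\ref{lm:divisor}; moreover it only closes when the largest non-paired frequency sits on a $w$ (a $Z_0$ factor). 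When it sits on a $u$, one must substitute \eqref{eq:u} once more, generating septic terms that require Proposition~\ref{prop:septic}; in particular the contraction cannot be run on the Appendix equation \eqref{eq:w} as you propose --- this is exactly the content of the remark following Proposition~\ref{prop:w} --- so your fixed-point map is not the one the estimates support without this extra partial iteration.

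Second, the terms collected in \eqref{quintic_bad}, namely $\D\mathcal{NR}_C\big(w,\conj{w},\varphi_T\cdot\G_{\#}[\cdots]\big)$ and the $\B^{2}_{A},\B^{3}_{A},\B^{3}_{B}$ terms with a $\G_{\#}$ factor in the maximal-modulation slot, do not reduce to the form \eqref{quintic} at all: in the $\D\mathcal{NR}_C$ case the first-generation derivative frequency dominates every frequency inside the $\G_{\#}$ factor, so the multiplier $|n_1n_3|/\jb{\Phi(\nbar_{345})}$ is only of size about $|n_1|/|n_3|$, which is unbounded, and neither the cubic $\X_C$ mechanism nor Proposition~\ref{standard_quintic} applies. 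The paper needs the refined Propositions~\ref{new_quintic_3} and \ref{prop:quintic_good}, with the case distinction \eqref{good_modulation} versus \eqref{bad_modulation} on whether the modulation of the $\G_{\#}$ factor dominates the inner modulations. Your mechanism (``Proposition~\ref{prop:KY} exchanges the factor $\jb{\s_{\max}}^{-1}$ for decay in the inner resonance'') covers only the favourable case \eqref{good_modulation}; in the complementary case \eqref{bad_modulation} no inner resonance factor is gained and one must instead exploit the frequency hierarchy together with three $Z_0$ factors. (A smaller inaccuracy: for the cubic $\B^{j}_*$ terms with $j\geq 1$ appearing in the $w$-equation, the maximal-modulation slot is already occupied by $w$, and the weight is simply paid by the $Z_0$ norm with $b_0=1-2\dl$; no inner resonance enters there.) So the skeleton is right, but the central multilinear estimates --- the second substitution with its septic terms, the pairing and divisor-counting analysis, and the refined treatment of \eqref{quintic_bad} --- are missing from your argument.
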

\begin{remark}\rm
	In order to show Proposition~\ref{prop:w}, we will not run a contraction mapping argument for the map defined by the right-hand side of \eqref{w_first} nor the equation \eqref{eq:w} included in Appendix~\ref{app:w}. Some quintic terms in \eqref{eq:w} require the use of the equation for $u$ \eqref{eq:u} once again, introducing new quintic terms but also new septic terms. Given the considerable number of new terms that this additional step introduces, we have decided to omit them when presenting the equation for $w$. The strategy for obtaining the new contributions is described in Section~\ref{sec:quintic} along with the estimates needed for both the quintic and septic terms. 
	
\end{remark}

\begin{remark}\rm \label{rm:persistence}
	In Sections \ref{sec:u} and \ref{sec:w}, we will establish the estimates needed to show Propositions~\ref{prop:u} and \ref{prop:w}, from which Theorem~\ref{th:lwp} follows for $s=\frac12$. In order to extend this result to $s>\frac12$, note that the required estimates in 
	\begin{align*}
	Y'_0 = X^{s,\frac12}_{p,r_0}, \quad Z'_0 = X^{s,b_0}_{p,q_0},
	\end{align*}
	follow easily from the estimates shown by associating the extra weight $\jb{n}^{s-\frac12}$ in the norm to the function with the largest frequency. Consequently, by a persistence of regularity argument, we obtain a unique solution $u\in C([-T,T];\FL^{s,p}(\T))$ where $T = T\big( \| u_0\|_{\FL^{\frac12,p}} \big)>0$.
	
\end{remark}

In the remaining of the paper, we will establish the nonlinear estimates needed to show Propositions~\ref{prop:u} and \ref{prop:w}. The results follow from a contraction mapping argument in $Y_0$ and $Z_0$, respectively. From Lemma~\ref{lm:T}, it suffices to estimate the terms in equations \eqref{eq:u} and \eqref{eq:w} in $Y_1$ and $Z_1$, respectively, dropping the factors of $\varphi_T$.

 For simplicity, we will use the notation $\D  \mathcal{NR}_{*}$ to denote $\D  \mathcal{NR}_{*, \geq}$, $\D  \mathcal{NR}_{*,>}$, for $*\in\{A,B,C,D\}$, $\G_*$ to denote $\G_{*, \geq}, \G_{*,>}$ for $*\in\{A,B\}$, and $\B^{j}_{*}$ for $\B^{j}_{*, \geq}, \B^{j}_{*,>}$ for $*\in\{A,B\}$ and $j=0,1,2,3$. In the estimates, there is no distinction between the frequency regions where $|n_2|\geq |n_3|$ and $|n_2| > |n_3|$, motivating this simplified notation.

\section{Proof of Proposition~\ref{prop:u}}\ \label{sec:u}
In this section, we establish the estimates needed to show Proposition~\ref{prop:u}.

\begin{lemma}\label{lm:EY}
	The following estimates hold
	\begin{align*}
	\| \G_{A} (u_1,u_2,u_3) \|_{Y_1} &\les \|u_1\|_{Z_0} \|u_2\|_{Y_0} \|u_3\|_{Y_0} ,\\
	\| \G_{B} (u_1,u_2,u_3) \|_{Y_1} &\les \|u_1\|_{Z_0} \|u_2\|_{Z_0} \|u_3\|_{Y_0}.
	\end{align*}
\end{lemma}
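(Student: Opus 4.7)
The plan is to start from the Fourier representation of $\G_A,\G_B$ in Proposition~\ref{prop:KY} together with the kernel bound $|K_G(\tau-n^3,\lambda-n^3,\Phi)|\les\jb{\tau-\lambda}^{-1}\min(\jb{\Phi}^{-1},\jb{\tau-n^3}^{-1})$. The target $Y_1$-weight $\jb{\tau-n^3}^{1/2}$ is absorbed into the kernel via the elementary inequality $\jb{\tau-n^3}^{1/2}\min(\jb{\Phi}^{-1},\jb{\tau-n^3}^{-1})\les\jb{\Phi}^{-1/2}$, which converts the temporal smoothing of the kernel into a purely spatial smoothing factor $\jb{\Phi(\nbar_{123})}^{-1/2}$. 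This is the essential gain offered by the modified Duhamel operator $\G_*$ compared with $\D\mathcal{NR}_*$, and it is the only reason the derivative $|n_1|$ in the nonlinearity can be tamed within this framework.

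Next, I would invoke the frequency relations from the lemma at the end of Section~\ref{sec:preliminaries}. In both $\X_A(n)$ and $\X_B(n)$ one has $|\Phi|\ges\max(|n_1|,|n_2|)^2$ and $\jb{n}\les\max(|n_1|,|n_2|)$, so $\jb{n}^{1/2}|n_1|\jb{\Phi}^{-1/2}\les\jb{n_1}^{1/2}$; this excess weight is absorbed by the $Z_0$-weight on $u_1$, after which the spatial multiplier reduces to $\jb{n_2}^{-1/2}\jb{n_3}^{-1/2}$ in both cases. The distinction between the two bounds lies in the size of $|n_2|$: in $\X_A$ one has $|n_2|\ll|n_1|$, so $n_2$ behaves as a low frequency and the lighter $Y_0$-norm on $u_2$ suffices; in $\X_B$ one has $|n_2|\sim|n_1|$ or $|n_2|\sim|n|$, so $u_2$ is a genuinely high-frequency factor whose contribution is only integrable when one uses the stronger modulation weight $\jb{\tau_2-n_2^3}^{b_0}$ and larger time-integrability $L^{q_0}_{\tau_2}$ supplied by $Z_0$.

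The final step is duality: pairing against a unit vector $g\in\ell^{p'}_nL^{r_1'}_\tau$, I would handle the $\tau$-integrals by H\"older's inequality, using that $\jb{\tau-\lambda}^{-1}\in L^{1+\eps}_\tau$ and $\jb{\tau_j-n_j^3}^{-b_j}\in L^{b_jq_j'}_{\tau_j}$ for the admissible parameters, invoking Lemma~\ref{lm:convolution} to evaluate the $\tau$-convolutions; the scheme $b_0=1-2\dl$, $q_0=(4\dl)^{-1}$, and $r_0,r_1$ slightly below $2$ is precisely tuned so the exponent arithmetic closes. The main obstacle I expect is the $n$-summation: since $\jb{n_j}^{-1/2}\notin\ell^{p'}(\Z)$ for $p\geq 2$, the sums over $n_2$ and $n_3$ do not close via a naive H\"older inequality and must instead be treated by a combined H\"older/Young argument in the convolution structure $n=n_1+n_2+n_3$, supported by a dyadic decomposition of the low frequencies and the small slack between $r_0$ and $r_1$.
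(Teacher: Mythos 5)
Your reduction of the kernel is the same as the paper's first step: absorbing the $Y_1$-weight via $\jb{\tau-n^3}^{1/2}\min(\jb{\Phi}^{-1},\jb{\tau-n^3}^{-1})\les\jb{\Phi}^{-1/2}$ and then using $|\Phi|\ges\max(|n_1|,|n_2|)^2$ to get $\jb{n}^{1/2}|n_1|\jb{\Phi}^{-1/2}\les\jb{n_1}^{1/2}$. The genuine gap is in your last paragraph. Once you decouple the time integration from the frequency sum (H\"older/Young in $\tau$ using $\jb{\tau-\ld}^{-1}\in L^{1+\eps}$, then estimate the spatial sum), what remains is essentially $\sum_{n_2,n_3}\jb{n_2}^{-1/2}\jb{n_3}^{-1/2}f_1(n-n_2-n_3)f_2(n_2)f_3(n_3)$ with all $f_j$ measured only in $\ell^p$-based norms, and no H\"older/Young scheme in the convolution structure closes this for $p>2$: taking $f_1=\1_{[M,M+N]}$, $f_2=f_3=\1_{[N/2,N]}$ with $M\gg N$ (so the triples lie in $\X_A(n)$), the left-hand side is of size $N\cdot N^{1/p}$ in $\ell^p_n$ while the right-hand side is $N^{3/p}$, a loss of $N^{1-2/p}$. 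The small slack between $r_0$, $r_1$ and $r_2$ cannot recover a polynomial loss in the low frequencies, and the kernel $\jb{\tau-\ld}^{-1}$ yields no further decay once the factor $\jb{\Phi}^{-1/2}$ has been extracted; so the estimate, as you propose to prove it, is false for large $p$.

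What is missing is precisely the mechanism that makes the lemma true for all $p<\infty$: one must keep the correlation between the output modulation and the resonance relation. After the change of variables $\s_j=\tau_j-n_j^3$ the time kernel becomes $\jb{\tau-n^3-\bar{\s}+\Phi(\nbar_{123})}^{-1}$, which the paper treats as a convolution kernel in the new variable $\mu=\Phi(\nbar_{123})$; Schur's test in $(\tau,\mu)$ reduces matters to an $\ell^p_n\ell^{r_2}_\mu$ bound of the sum restricted to the level set $\X^\mu_*(n)=\{\Phi(\nbar_{123})=\mu\}$, and the divisor-counting Lemma~\ref{lm:divisor} (for $\X_A$, where $|\Phi|\les|n|^3$; the standard divisor bound for $\X_B$) gives $|\X^\mu_*(n)|\les(N_2N_3)^\eps$ after dyadic localization of $n_2,n_3$. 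It is this counting of lattice points on level sets of $\Phi$, not summability of $\jb{n_j}^{-1/2}$ in $\ell^{p'}$, that closes the $n$-sum; this is the core of the Deng--Nahmod--Yue-type argument and must appear explicitly. A smaller point: the paper proves the stronger bound with $\|u_2\|_{Y_0}$ in both regions and deduces the $\G_B$ estimate from the embedding $Z_0\embeds Y_0$, so your suggestion that the extra modulation weight and time integrability of $Z_0$ on $u_2$ are needed in $\X_B$ does not reflect how the argument runs, though it is harmless for the stated (weaker) inequality.
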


\begin{proof}
	We will show that
	\begin{align*}
	\| \G_* (u_1,u_2,u_3) \|_{Y_1} & \les \|u_1\|_{Z_0} \|u_2\|_{Y_0} \|u_3\|_{Y_0},
	\end{align*}
	for $*\in\{A,B\}$, since the second estimate follows from the embedding $Z_0 \embeds Y_0$.
	Using Proposition~\ref{prop:KY}, namely \eqref{estimateKY_new}, and the changes of variables $\tau_j + n^3_j = \s_j  $, $j=1,2,3$, it follows that 
	\begin{align}
		&\| \G_* (u_1,u_2,u_3) \|_{Y_1}\nonumber\\
		& \!\!\!\les  \bigg\|  \sum_{\substack{n=n_1+n_2+n_3, \\ \nbar_{123} \in \X_*(n)}}  \intt_\R \frac{\jb{n}^\frac12 |n_1| }{\jb{\tau - \ld} \jb{\Phi(\nbar_{123})}^\frac12}  \intt_{\ld= \tau_1 + \tau_2 + \tau_3} \prod_{j=1}^3 |\ft{u}_j (\tau_j, n_j)| \ d\ld \bigg\|_{\l^p_n L^{r_1}_{\tau}} \nonumber\\
		 &\!\!\!\les  \bigg\|  \sum_{\substack{n=n_1+n_2+n_3, \\ \nbar_{123} \in \X_*(n)}}  \!\intt_{\s_1, \s_2 , \s_3} \frac{\jb{n}^\frac12 |n_1| }{\jb{\tau -n^3 - \bar{\s} + \Phi(\nbar_{123})} \jb{\Phi(\nbar_{123})}^\frac12}    \prod_{j=1}^3 |\ft{u}_j (\s_j + n_j^3, n_j)|  \bigg\|_{\l^p_n L^{r_1}_{\tau}},\label{change_variables}
	\end{align}
	where $\bar{\s} = \s_1 + \s_2 + \s_3$.
	Note that 
	$$\frac{\jb{n}^\frac12|n_1|}{\jb{\Phi(\nbar_{123})}^\frac12} \les \frac{\jb{n}^\frac12 |n_1|}{\max_{j=1,2,3}\jb{n_j}} \les \jb{n_1}^\frac12.$$
	Minkowski's inequality gives
	\begin{multline*}
	\| \G_* (u_1,u_2,u_3) \|_{Y_1} \les \intt_{\s_1, \s_2, \s_3} \bigg\| \sum_{\substack{n=n_1+n_2+n_3, \\ \nbar_{123} \in \X_*(n)}} \frac{\jb{n_1}^\frac12}{\jb{ \tau - n^3 - \bar{\s} + \Phi(\nbar_{123})}}  
	\prod_{j=1}^3 |\ft{u}_j (\s_j + n_j^3, n_j)|   \bigg\|_{\l^p_n L^{r_1}_{\tau}} .
	\end{multline*}
	Denoting the inner norm by $\I$, we can rewrite the sum as follows
	\begin{align*}
	\I 
	 & \les \Bigg\| \sum_{\mu} \frac{1}{\jb{\tau - n^3 - \bar{\s} + \mu}} \Bigg( \sum_{\substack{n=n_1+n_2+n_3, \\\nbar_{123} \in \X_*^\mu(n)}} \jb{n_1}^\frac12 \prod_{j=1}^3 |\ft{u}_j(\s_j + n_j^3, n_j)| \Bigg) \Bigg\|_{\l^{p}_n L^{r_1}_\tau},
	\end{align*}
	for $X^\mu_*(n)$ defined in \eqref{localizedX}.
	Since the following bounds hold uniformly in $\bar{\s}$ and $n$, for any $\tilde{r}>1$, 
	\begin{align*}
	\sum_\mu \frac{1}{\jb{\tau - n^3 - \bar{\s} + \mu}^{\tilde{r}}} \les 1, \quad 
	\intt_\R\frac{1}{\jb{\tau - n^3 - \bar{\s} + \mu}^{\tilde{r}}} d\tau \les 1,
	\end{align*}
	choosing $\tilde{r} = \frac{1}{1-\dl}$, we have $\frac{1}{r_1} + 1 = \frac{1}{\tilde{r}} + \frac{1}{r_2}$ and we can apply Schur's test\footnote{\textbf{Schur's test:} Let $X,Y$ be measurable spaces, $K:X\times Y \to \R$ a non-negative Schwartz kernel and $1\leq p,q,r\leq \infty$ such that $1 + \frac1q = \frac1p + \frac1r$. If for some $C>0$ we have \begin{align*}
		\sup_{x\in X}\intt_Y |K(x,y)|^r dy + \sup_{y\in Y}
		\intt_X |K(x,y)|^r dx  \leq C^r \quad \text{then} \quad \bigg\| \intt_Y K(x,y) f(y) \ dy \bigg\|_{L^q(X)} \leq C \|f\|_{L^p(Y)}.
		\end{align*}  } to obtain
	\begin{align}
	\I & \les \Bigg\| \sum_{\substack{n=n_1+n_2+n_3, \\\nbar_{123} \in \X^\mu_*(n)}}  \jb{n_1}^\frac12  \prod_{j=1}^3 |\ft{u}_j(\s_j + n_j^3,n_j)| \Bigg\|_{\l^{p}_n \l^{r_2}_{\mu}}. \label{EY_aux}
	\end{align}
	Let $\P_{N_j}$ denote the projection onto $\jb{n}\sim N_j$ and let $f_1(\s, n) = \jb{n_1}^\frac12 |\ft{u}_1(\s+n^3, n)|$, $f_j(\s,n) = | \ft{\P_{N_j} u}_j(\s+n^3, n)|$, $j=2,3$. Then, using Minkowski and H\"older's inequalities, we get
	\begin{align*}
	\I 
	& \les \sum_{N_2,N_3} \bigg\| |\X_*^\mu(n)|^{\frac{1}{r_2'}} \bigg(\sum_{\substack{n=n_1+n_2+n_3,\\\nbar_{123}\in\X_*^\mu(n)}}\prod_{j=1}^3 |f_j(\s_j, n_j)|^{r_2} \bigg)^{\frac{1}{r_2}} \bigg\|_{\l^{p}_n \l^{r_2}_{\mu}}.
	\end{align*}
	If $*=A$, we have $|\Phi(\nbar_{123})| \les |n_1|^3 \sim |n|^3$, so we use Lemma~\ref{lm:divisor} to count the divisors $d_2=n-n_2, d_3=n-n_3$ of $m$. Since
	\begin{align*}
	|d_2 - n| = |n_2| \leq N_2, \quad |d_3 - n| = |n_3| \leq N_3
	\end{align*}
	and $1 \leq |m|^\eps \leq |m|^\frac13 \les |n|$, for any $0<\eps\leq \frac13$, we conclude that there are at most $\mathcal{O}(N_j^\eps)$ choices for $d_j$, $j=2,3$. Since $n$ is fixed, this determines the choices of $n_2,n_3$ and consequently of $n_1$. 
	If $*=B$, then $|\Phi(\nbar_{123})| \les |n_2|^3$ and we can use the standard divisor counting estimate to conclude that there are at most $\mathcal{O}(N_2^\eps)$ choices for $n_2,n_3$. Consequently, $|\X_*^\mu(n)| \les (N_2N_3)^\eps$ and we have
	\begin{align*}
	\I & \les \sum_{N_2,N_3} (N_2 N_3)^\eps \bigg\|  \bigg( \sum_\mu  \sum_{\substack{n=n_1+n_2+n_3, \\\nbar_{123}\in\X^\mu_*(n)}}  \prod_{j=1}^3 |f_j(\s_j, n_j)|^{r_2} \bigg)^{\frac{1}{r_2}} \bigg\|_{\l^{p}_n }\\
	& \les \sum_{N_2,N_3} (N_2 N_3)^\eps \bigg\|  \bigg(  \sum_{\substack{n=n_1+n_2+n_3}} \bigg(\sum_\mu \1_{\Phi(\nbar_{123}) = \mu} \bigg) \prod_{j=1}^3 |f_j(\s_j, n_j)|^{r_2} \bigg)^{\frac{1}{r_2}} \bigg\|_{\l^{p}_n }\\
	& \les \sum_{N_2,N_3} (N_2N_3)^\eps \|f_1\|_{\l^p_n} \|f_2\|_{\l^{r_2}_n} \|f_3\|_{\l^{r_2}_n},
	\end{align*}
	where we use Minkowski's inequality and the fact that $r_2<2\leq p$ in the last inequality. Choosing $\eps< \dl$, we obtain 
	$$\I \les \|\jb{n}^\frac12 \ft{u}_1(\s_1+n^3,n)\|_{\l^p_n} \|\jb{n}^\dl \ft{u}_2(\s_2 + n^3, n)\|_{\l^{r_2}_n} \|\jb{n}^\dl \ft{u}_3(\s_3 + n^3, n)\|_{\l^{r_2}_n}.$$
	Applying this estimate to \eqref{EY_aux} gives
	\begin{align*}
	\| \G_* (u_1,u_2,u_3) \|_{Y_1} & \les  \|\jb{n}^\frac12 \ft{u}_1(\s+n^3,n)\|_{L^1_\s \l^p_n} \prod_{j=2}^3 \|\jb{n}^\dl \ft{u}_j(\s + n^3, n)\|_{L^1_\s \l^{r_2}_n} \\
		& \les \|u_1\|_{Z_0} \|u_2\|_{Y_0} \|u_3\|_{Y_0},
	\end{align*}
	by using H\"older and Minkowski's inequalities.
	
\end{proof}

\begin{remark}\rm
	
The change of variables from $\tau_j$ to the modulation $\s_j = \tau_j - n_j^3$, $j=1,2,3$, in \eqref{change_variables} is needed to guarantee that the quantity
\begin{align*}
\frac{1}{\jb{\tau -\ld}^{1-\dl}} 
&= \frac{1}{\jb{\tau-n^3 - \s_1 - \s_2 - \s_3 + \Phi(\nbar_{123})}^{1-\dl}}
\end{align*}
has an explicit dependence on the resonance relation $\Phi(\nbar_{123})$ and that when fixing its value, $\Phi(\nbar_{123}) = \mu$, there is no longer dependence on the variables $n_1, n_2,n_3$. Thus, one can consider the quantity inside the norm as a convolution operator in $\mu$, depending on $\tau$:
$$\sum_\mu \frac{1}{\jb{\tau - n^3 - \s_1 - \s_2 - \s_3 + \mu}^{1-\dl}} F(\mu, n_1,n_2,n_3).$$
This trick allows us to estimate the norm in $\tau$ and introduce a restriction on the value of the resonance relation. This strategy will be used throughout the paper.
\end{remark}

\section{Proof of Proposition~\ref{prop:w}}\ \label{sec:w}
In this section, we show the multilinear estimates needed to prove Proposition~\ref{prop:w} by a contraction mapping argument. In particular, we estimate the trilinear and quintilinear operators on the right-hand side of \eqref{eq:w}.

In Subsection~\ref{sec:cubic}, we focus on the cubic terms in \eqref{eq:w}, namely
\begin{equation}\label{cubic}
\begin{aligned}
\mathcal{IR}(u_1,u_2,u_3),  &&
\D  \mathcal{NR}_C(w_1,w_2,w_3) , && \D  \mathcal{NR}_D(w_1,w_2,w_3), \\
\B^{0}_A(w_1,u_2,u_3),  && \B^{1}_A(w_1,u_2,u_3), && \B^{2}_A(w_1,w_2,u_3),  &&\B^{3}_A(w_1,u_2,w_3),  \\
\B^{0}_B(w_1,w_2,u_3), &&
\B^{1}_B(w_1,w_2,u_3),  &&
\B^{2}_B(w_1,w_2,u_3), &&
\B^{3}_B(w_1,w_2,w_3),
\end{aligned}
\end{equation}
where $u_j \in \{u, \conj{u}\}$, $w_j\in\{w,\conj{w}\}$, $j=1,2,3$.

The quintic terms in \eqref{eq:w} arise from substituting a $u$ entry by an $\G_{\#}$-operator, for $\#\in\{A,B\}$. First, note that 
\begin{align*}
&\big|\Ft_{t,x} \big(\varphi_T \cdot \G_{\#} (u_1,u_2,u_3) \big) (\tau, n) \big|\\
&  \phantom{XXX}\les \sum_{\nbar_{123}\in\X_{\#}(n)} \intt_{\R} \frac{|n_1|}{\jb{\tau-\mu'}^{1-\theta} \jb{\Phi(\nbar_{123})}} \intt_{\mu' = \tau_1 + \tau_2 + \tau_3} \prod_{j=1}^3 |\ft{u}_j (\tau_j, n_j)| \ d\mu' , \\
&\big|\Ft_{t,x} \conj{\big(\varphi_T \cdot \G_{\#} (u_1,u_2,u_3) \big)} (\tau, n) \big| \\
&  \phantom{XXX} \les \sum_{\nbar_{123}\in\X_{\#}(n)} \intt_{\R} \frac{|n_1|}{\jb{\tau - \mu'}^{1-\theta} \jb{\Phi(\nbar_{123})}} \intt_{\mu' = \tau_1 + \tau_2 + \tau_3} \prod_{j=1}^3 |\ft{\conj{u}}_j (\tau_j, n_j)| \ d\mu',
\end{align*}
for any $0<\theta\ll1$. Since $\| \conj{u} \|_{X^{s,b}_{p,q}} = \| u\|_{X^{s,b}_{p,q}}$ for any choice of $s,b,p,q$, we will omit the contributions that depend on $\conj{\G_{\#}}$, as they can be estimated analogously. 

We start by calculating the space-time Fourier transform of the quintic contributions arising from $\mathcal{DN}$ terms. For example, for $*\in\{A,B,C,D\}$ and $\#\in\{A,B\}$, we have the following estimate
\begin{multline*}
\big|\Ft_{t,x} \big( \mathcal{DN}_* \big( \varphi_T \cdot \G_{\#}[u_1,u_2,u_3], u_4,u_5 \big) (\tau,n) \big| \\ 
\les \sum_{\substack{\nbar_{045} \in \X_*(n),\\ \nbar_{123} \in \X_{\#}(n_0)}} \intt_\R \intt_{\ld = \tau_1 + \ldots + \tau_5} \frac{|n_0n_1|}{\jb{\tau-\ld}^{1-\theta} \jb{\tau-n^3} \jb{\Phi(\nbar_{123})}} \prod_{j=1}^5 |\ft{u}_j(\tau_j, n_j)| \, d\ld, \label{DN_quintic}
\end{multline*}
for some $0<\theta<1$.
Similar estimates can be obtained for the contributions $\mathcal{DN}_*\big(u_1,\varphi_T \cdot \G_{\#}[u_2,u_3,u_4], u_5\big)$ and $\mathcal{DN}_*\big(u_1, u_2,\varphi_T \cdot \G_{\#}[u_3,u_4,u_5]\big)$, $*\in\{B,C,D\}$, $\#\in\{A,B\}$.
The main difficulty is controlling the spatial multiplier defined as follows
\begin{align*}\al (n, \nbar_{0\ldots 5}) =
\begin{cases}
\displaystyle \frac{|n_0n_1|}{|\Phi(\nbar_{123})|}  , &\text{if } \nbar_{045} \in \X_*(n), \ \nbar_{123} \in \X_{\#}(n_0) , \\
\displaystyle \frac{|n_1n_2|}{|\Phi(\nbar_{234})|}  , & \text{if }\nbar_{105} \in \X_*(n), \ \nbar_{234} \in \X_{\#}(n_0) , \\
\displaystyle \frac{|n_1n_3|}{|\Phi(\nbar_{345})|}  , & \text{if } \nbar_{120} \in \X_*(n), \ \nbar_{345} \in \X_{\#}(n_0) .\\
\end{cases} 
\end{align*}
We will refer to the frequencies in $\X_*(n)$ as the first generation of frequencies and those in $\X_{\#}(n_0)$ as the second when referring to the quintic terms.

In Subsection~\ref{sec:quintic}, we will estimate the contributions for which $\al(n, \nbar_{0\ldots5}) \les 1$, namely
\begin{equation}\label{quintic_terms}
\begin{aligned}
\D  \mathcal{NR}_{*} \big( \varphi_T \cdot \G_{A}[w_1,u_2,u_3],  u_4,u_5\big), && \D  \mathcal{NR}_{*} \big( \varphi_T \cdot \G_{B}[w_1,w_2,u_3],  u_4,u_5\big),  \\
\D  \mathcal{NR}_{\#} \big( w_1, \varphi_T \cdot \G_{A}[w_2,u_3,u_4],u_5\big), && \D  \mathcal{NR}_{\#} \big( w_1, \varphi_T \cdot \G_{B}[w_2,w_3,u_4],u_5\big), \\
\D  \mathcal{NR}_{D} \big( w_1, w_2, \varphi_T \cdot \G_{A}[w_3,u_4,u_5]\big), && \D  \mathcal{NR}_{D} \big( w_1, w_2, \varphi_T \cdot \G_{B}[w_3,w_4,u_5]\big),
\end{aligned}
\end{equation}
where $*\in\{A,B,C,D\}$, $\#\in\{B,C,D\}$ and $u_j\in\{u,\conj{u}\}$, $w_j \in \{u, \conj{w}\}$, $j=1, \ldots, 5$. 
The estimate for these contributions follows once we control $\mathcal{Q}(u_1,\ldots, u_5)$ defined by its space-time Fourier transform as follows
\begin{multline}
\Ft_{t,x}\big(\mathcal{Q}(u_1, \ldots, u_5)\big) (\tau,n) \\= \sum_{n=n_1+\ldots+n_5} \intt_\R \intt_{\ld = \tau_1 + \ldots + \tau_5} \frac{1}{\jb{\tau-\ld}^{1-\theta} \jb{\tau-n^3}} \prod_{j=1}^5 |\ft{u}_j(\tau_j, n_j)| \, d\ld .
\label{quintic}
\end{multline}
In Subsection~\ref{sec:quintic}, we establish an estimate for \eqref{quintic} under particular assumptions on the frequencies. Not all the contributions in \eqref{quintic_terms} will satisfy these additional assumptions, which will forces us to use the equation for $u$ once again, introducing new septic terms for which we also establish relevant estimates.

It remains to consider the following quintic contributions
\begin{equation}\label{quintic_bad}
\begin{aligned}
\D  \mathcal{NR}_C \big( w_1,w_2, \varphi_T \cdot \G_A[w_3,u_4,u_5] \big), && \D  \mathcal{NR}_C \big( w_1,w_2, \varphi_T \cdot \G_B[w_3,w_4,u_5] \big),\\
\B^{2}_A\big( w_1, \varphi_T \cdot \G_A[w_2,u_3,u_4], u_4 \big), && \B^{2}_A\big( w_1, \varphi_T \cdot \G_B[w_2,w_3,u_4], u_4 \big), \\
\B^{3}_A\big( w_1, u_2, \varphi_T \cdot \G_A[w_3,u_4,u_5]\big), && \B^{3}_A\big( w_1, u_2, \varphi_T \cdot \G_B[w_3,w_4,u_5]\big), \\
\B^{3}_B\big( w_1, w_2, \varphi_T \cdot \G_A[w_3,u_4,u_5]\big), &&
\B^{3}_B\big( w_1, w_2, \varphi_T \cdot \G_B[w_3,w_4,u_5]\big),
\end{aligned}
\end{equation}
where $u_j \in \{u, \conj{u}\}$, $w_j \in \{w, \conj{w}\}$, $j=1,\ldots, 5$. The $\mathcal{DN}_C$ contributions are not controlled by \eqref{quintic} and thus need a more refined approach. For the $\B^j_*$ contributions, not only does the $j$-th modulation play an important role, but also the largest modulation of the new functions in $\G_{\#}$. Thus these contributions require a more careful analysis, detailed in Subsection~\ref{sec:rem_quintic}.

\subsection{Cubic terms}\label{sec:cubic}
We start by estimating the cubic terms in \eqref{cubic}.

\begin{lemma}
	The following estimate holds
	\begin{align*}
	 \| \mathcal{I R} (u_1,u_2,u_3) \|_{Z_1} \les \|u_1\|_{Y_0} \|u_2\|_{Y_0} \|u_3\|_{Y_0}.
	\end{align*}
\end{lemma}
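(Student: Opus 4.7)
The plan is to exploit the diagonal structure of the resonant operator: since $\Ft_x(\mathcal{R}(u_1,u_2,u_3))(n)=-in\,\ft u_1(n)\conj{\ft u}_2(n)\ft u_3(n)$, all three factors are evaluated at the same spatial frequency $n$ (modulo a sign induced by the conjugation), so the sum over $n_1,n_2,n_3$ collapses. Interpreting $\mathcal{IR}=\D\mathcal{R}$ and applying Proposition~\ref{prop:KI}, I would start from the bound
\begin{align*}
|\Ft_{t,x}(\mathcal{IR}(u_1,u_2,u_3))(\tau,n)| \les \frac{|n|}{\jb{\tau-n^3}}\intt_\R\frac{(|g_1|\ast|g_2|\ast|g_3|)(\ld-n^3,n)}{\jb{\tau-\ld}}\,d\ld,
\end{align*}
where $g_j(\s,n)=|\ft u_j(\s+n^3,n)|$ and the convolution is in the $\s$ variable; any sign reflections induced by the conjugation do not affect the $L^p$ norms used below.

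Next I would reorganize to set up Young's inequality. The $Z_1$-weight is $\jb{n}^{1/2}\jb{\tau-n^3}^{b_1}$; since $b_1-1=-\dl$ the modulation weight is bounded by $1$, and the total spatial factor $\jb{n}^{1/2}\cdot|n|=\jb{n}^{3/2}$ splits evenly among the three $\jb{n}^{1/2}$ weights in $\prod_j\|u_j\|_{Y_0}$. Setting $h_j(\s,n)=\jb{n}^{1/2}\jb{\s}^{1/2}g_j(\s,n)$, so that $\|h_j\|_{\ell^p_nL^{r_0}_\s}=\|u_j\|_{Y_0}$, and noting that translation by $n^3$ in $\tau$ preserves the $L^{q_0}_\tau$-norm, the task reduces to controlling
\begin{align*}
\bigg\|\bigg(\tfrac{1}{\jb{\cdot}}\ast(\jb{\cdot}^{-1/2}h_1(\cdot,n))\ast(\jb{\cdot}^{-1/2}h_2(\cdot,n))\ast(\jb{\cdot}^{-1/2}h_3(\cdot,n))\bigg)\bigg\|_{\ell^p_n L^{q_0}_\tau}.
\end{align*}

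For fixed $n$, I would apply Young's fourfold convolution inequality in $\s$ with exponent balance $3+\tfrac{1}{q_0}=\tfrac{1}{p_0}+\sum_{j=1}^3\tfrac{1}{p_j}$, combined with a H\"older split $\tfrac{1}{p_j}=\tfrac{1}{a_j}+\tfrac{1}{r_0}$ to pair $\jb{\cdot}^{-1/2}\in L^{a_j}$ with $h_j(\cdot,n)\in L^{r_0}_\s$. The choice $\tfrac{1}{p_0}=1-\dl$ reduces the constraint to $\sum_{j=1}^3\tfrac{1}{a_j}=\tfrac{1}{2}+2\dl$, which is satisfiable with each $a_j$ slightly above $6$ (in particular $>2$), making all auxiliary norms finite. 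This yields an $L^{q_0}_\tau$-bound of $\prod_j\|h_j(\cdot,n)\|_{L^{r_0}_\s}$ for each fixed $n$, and a final H\"older in $n$ with three factors in $\ell^{3p}_n$, together with the embedding $\ell^p\embeds\ell^{3p}$, gives the desired $\prod_j\|u_j\|_{Y_0}$.

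The hard part will be verifying that all Young exponents can be chosen compatibly so that $\|\tfrac{1}{\jb{\cdot}}\|_{L^{p_0}}$ and $\|\jb{\cdot}^{-1/2}\|_{L^{a_j}}$ remain finite: this works precisely because $q_0=1/(4\dl)$ is very large and $r_0<2$, leaving just enough slack. Unlike the nonresonant cubic estimates, no resonance relation $\Phi$ is available to help control the derivative here; instead $|n|$ is paid for entirely by the three $\jb{n}^{1/2}$ weights, which is only possible because the resonance forces all spatial frequencies to coincide.
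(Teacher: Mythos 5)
Your proposal is correct and follows essentially the same route as the paper: use the kernel bound of Proposition~\ref{prop:KI}, discard the excess modulation weight $\jb{\tau-n^3}^{b_1-1}=\jb{\tau-n^3}^{-\dl}$, split the spatial factor $\jb{n}^{\frac12}|n|$ evenly into three $\jb{n}^{\frac12}$ weights (possible only because the resonance forces all frequencies to coincide), and close with Young/H\"older in the time variable followed by H\"older and $\l^p\embeds\l^{3p}$ in $n$. The only differences are bookkeeping and immaterial: the paper performs one Young step to pass from $L^{q_0}_\tau$ to $L^{r_0}_\tau$ and then a weighted H\"older using Lemma~\ref{lm:convolution} to bound the resulting integral $J(\tau,n)$, whereas you run a fourfold Young inequality with H\"older splits (your exponent balance checks out, though with $\sum_{j}a_j^{-1}=\tfrac12+2\dl$ the $a_j$ sit slightly below $6$ rather than above; only $a_j>2$ is needed, as you note).
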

\begin{proof}
	Using the kernel estimate for $\D $ in \eqref{estimateK_new} and Young's inequality, we have
	\begin{align*}
	\| \mathcal{I R} (u_1,u_2,u_3) \|_{Z_1} 
	& \les \bigg\| \intt_\R \frac{1}{\jb{\tau-\ld}} \intt_{\ld = \tau_1 - \tau_2 + \tau_3}  \prod_{j=1}^3 \jb{n}^\frac12 |\ft{u}_j(\tau_j,n)| \, d\ld \bigg\|_{\l^p_n L^{q_0}_\tau}\\
	& \les \bigg\| \intt_{\tau= \tau_1 - \tau_2 + \tau_3} \prod_{j=1}^3 \jb{n}^\frac12 |\ft{u}_j (\tau_j,n)| \bigg\|_{\l^p_n L^{r_0}_\tau} ,
	\end{align*}
	for $\dl < \frac16$. 
	Applying H\"older's inequality gives
	\begin{align*}
	\| \mathcal{I R} (u_1,u_2,u_3) \|_{Z_1}
	& \les \sup_{\tau,n} J(\tau,n) 
	\bigg\| \prod_{j=1}^3 \| \jb{n}^\frac12 \jb{\tau-n^3}^\frac12 \ft{u}_j(\tau,n) \|_{L^{r_0}_\tau} \bigg\|_{\l^p_n},
	\end{align*}
	where 
	\begin{align*}
	J(\tau,n)^{r'_0} & =  \intt_{\R^2} \frac{d\tau_1 \, d\tau_2}{\jb{\tau_1 - n^3}^{\frac{r'_0}{2}} \jb{\tau_2 - n^3}^{\frac{r'_0}{2}} \jb{\tau  - \tau_1 +\tau_2 - n^3}^{\frac{r'_0}{2}}   } \les 1
	\end{align*}
	from Lemma~\ref{lm:convolution}.
	The result follows from H\"older's inequality.
\end{proof}

\begin{lemma}Let $*\in\{C,D\}$. Then, the following estimate holds
	\begin{align*}
	\| \D  \mathcal{NR}_*(u_1,u_2,u_3) \|_{Z_1} \les \|u_1\|_{Z_0} \|u_2\|_{Z_0} \|u_3\|_{Z_0}.
	\end{align*}
\end{lemma}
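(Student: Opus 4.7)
The plan is to adapt the argument of Lemma~\ref{lm:EY}, replacing the $\G_*$-kernel bound by the kernel estimate \eqref{estimateK_new} for the Duhamel operator $\D$ in Proposition~\ref{prop:KI}, and exploiting the structural properties of the frequency regions $\X_C(n), \X_D(n)$ to absorb the derivative $|n_1|$ in the nonlinearity. After invoking $|K(\tau,\ld)|\les\jb{\tau}^{-1}\jb{\tau-\ld}^{-1}$ and performing the change of variables $\s_j = \tau_j - n_j^3$ (so that $\tau-\ld = \tau-n^3+\Phi(\nbar_{123})-\bar{\s}$ with $\bar{\s}=\s_1+\s_2+\s_3$), matching against the $Z_1$-weight $\jb{\tau-n^3}^{b_1}$ reduces the claim to bounding
\begin{align*}
\bigg\| \sum_{\substack{n=n_1+n_2+n_3\\\nbar_{123}\in\X_*(n)}}\intt_{\s_1,\s_2,\s_3} \frac{\jb{n}^{\frac12}|n_1|\prod_{j=1}^3|\ft u_j(\s_j+n_j^3,n_j)|}{\jb{\tau-n^3+\Phi(\nbar_{123})-\bar{\s}}\,\jb{\tau-n^3}^{\dl}}\bigg\|_{\l^p_n L^{q_0}_\tau}
\end{align*}
by $\prod_{j=1}^3\|u_j\|_{Z_0}$.

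For $*=D$, I would invoke the estimate $\jb{n}^{\frac12}|n_1|\les(\jb{n_1}\jb{n_2}\jb{n_3})^{\frac12}$ valid on $\X_D(n)$ to distribute the derivative as a factor $\jb{n_j}^{\frac12}$ on each $u_j$, removing it from the multiplier entirely. The remaining convolution would then be bounded by the same machinery as in Lemma~\ref{lm:EY}: freeze the resonance via $\sum_\mu \1_{\Phi(\nbar_{123})=\mu}$, apply Schur's test on the kernel $\jb{\tau-n^3+\mu-\bar{\s}}^{-1}\jb{\tau-n^3}^{-\dl}$ in the $(\tau,\mu)$-variables (using $\dl>0$ to close the $L^{q_0}_\tau$ integration), and conclude with the divisor counting estimate (Lemma~\ref{lm:divisor}) applied to $|\Phi(\nbar_{123})|\les|n_2|^3$, which gives $|\X_D^\mu(n)|\les(N_2 N_3)^\eps$ for any $\eps>0$; choosing $\eps<\dl$ permits the summation over the dyadic frequency projections $N_2, N_3$ to close.

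For $*=C$, where $|n_1|\sim|n_2|\gg|n_3|\ges|n|$, I would first trade the derivative using $|\Phi(\nbar_{123})|\ges|n_1|^2$ and $\jb{n}\les\jb{n_3}$, obtaining $\jb{n}^{\frac12}|n_1|\jb{\Phi(\nbar_{123})}^{-\frac12}\les\jb{n_3}^{\frac12}$, which places the residual weight on the lowest-frequency entry $u_3$. The missing factor $\jb{\Phi(\nbar_{123})}^{\frac12}$ would then be recovered from
\begin{align*}
\jb{\Phi(\nbar_{123})}\les\jb{\tau-n^3+\Phi(\nbar_{123})-\bar{\s}}+\jb{\tau-n^3}+\jb{\s_1}+\jb{\s_2}+\jb{\s_3},
\end{align*}
splitting into five cases according to which term dominates. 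In each case the gain $\jb{\Phi}^{\frac12}$ is absorbed by the corresponding modulation at the cost of a small loss in that factor's weight, and the subsequent Schur plus divisor counting argument (now using $|\Phi(\nbar_{123})|\les|n_1|^3$) proceeds as in region $D$.

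The main obstacle is the sub-case of region $C$ in which $\jb{\tau-n^3}$ is the dominant modulation: absorbing $\jb{\Phi(\nbar_{123})}^{\frac12}\les\jb{\tau-n^3}^{\frac12}$ replaces the available $\jb{\tau-n^3}^{-\dl}$ by the growing factor $\jb{\tau-n^3}^{\frac12-\dl}$, whose $L^{q_0}_\tau$-integration against the singular kernel $\jb{\tau-n^3+\Phi(\nbar_{123})-\bar{\s}}^{-1}$ requires a more delicate convolution bound (via Lemma~\ref{lm:convolution}) and ultimately forces a smallness condition on $\dl$ relative to the Schur/H\"older exponents $q_0, r_2$. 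Once this case is handled, the remaining cases follow the template of Lemma~\ref{lm:EY} without further difficulty.
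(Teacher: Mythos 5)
Your treatment of region $D$ has a funding problem at the divisor-counting step. After distributing the derivative via $\jb{n}^{\frac12}|n_1|\les(\jb{n_1}\jb{n_2}\jb{n_3})^{\frac12}$, the spatial weights of the three $Z_0$-norms are completely consumed in the near-resonant configuration $|n_1|\sim|n_2|\sim|n_3|\sim|n|$ (which lies in $\X_D(n)$), so there is no leftover negative power of $N_2,N_3$: the statement that ``choosing $\eps<\dl$ permits the summation over the dyadic frequency projections to close'' has nothing to sum against, and the $(N_2N_3)^\eps$ (equivalently $|\mu|^\eps$) loss is unpaid. This is exactly where the paper's proof differs from Lemma~\ref{lm:EY}: there the loss was absorbed by spare spatial weight on the $Y_0$-factors, which is unavailable here. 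The paper instead keeps the leftover time weight $\jb{\tau-n^3}^{b_1-1}=\jb{\tau-n^3}^{-\dl}$ and converts it into $\jb{\Phi(\nbar_{123})}^{-\dl}$ using $\jb{\Phi}\les\jb{\tau-n^3}\jb{\tau-\ld}\jb{\s_1}\jb{\s_2}\jb{\s_3}$ (the small $\jb{\s_j}^{\dl}$ losses fit inside the $Z_0$ budget $b_0=1-2\dl$), and this $\jb{\mu}^{-\dl}$ is what pays the divisor count after H\"older with exponent $p$ on $\X^\mu_*(n)$; Schur's test then closes with kernel power $1-\dl$ alone, provided $\dl<\frac{1}{5p}$. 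In your setup you spend the $\jb{\tau-n^3}^{-\dl}$ inside the Schur kernel, where it is not needed, and thereby lose the only available source of decay.

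For region $C$ your detour through $\jb{\Phi}^{-\frac12}$ is both unnecessary and, as sketched, does not close. It is unnecessary because on $\X_C(n)$ one has $|n|\les|n_3|$ and $|n_1|\sim|n_2|$, so $\jb{n}^{\frac12}|n_1|\les(\jb{n_1}\jb{n_2}\jb{n_3})^{\frac12}$ holds directly, and the paper runs the identical argument for $C$ and $D$ with this single bound. It does not close because recovering $\jb{\Phi}^{\frac12}$ from the modulations is precisely the move the splitting of the Duhamel operator is designed to avoid: in the case where some $\jb{\s_j}$ dominates you would need an extra $\jb{\s_j}^{\frac12}$ beyond the $Z_0$ weight (the budget $\jb{\s}^{1-2\dl}$ in $L^{q_0}_\s$ only tolerates an extra $\jb{\s}^{a}$ with $a\les\dl$, not $a=\frac12$), and in the cases where $\jb{\tau-n^3}$ or $\jb{\tau-\ld}$ dominates the resulting kernel has power about $\frac12$, for which the Schur/convolution step fails once $p\geq 2$ since $\frac1r=1+\frac{1}{q_0}-\frac1p\geq\frac12+4\dl$. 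You acknowledge only the $\jb{\tau-n^3}$-dominant case and leave it unresolved, so this half of the argument is genuinely incomplete; replacing it by the pointwise bound above, together with the $\jb{\Phi}^{-\dl}$ bookkeeping described for region $D$, gives the paper's proof.
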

\begin{proof}
	Let $*\in\{C,D\}$, then $\nbar_{123}\in\X_*(n)$ implies that $\jb{n}^\frac12 |n_1| \les \jb{n_1}^\frac12 \jb{n_2}^\frac12 \jb{n_3}^\frac12$. Using \eqref{estimateK_new}, we have
	\begin{align*}
	&\| \D  \mathcal{NR}_*(u_1,u_2,u_3) \|_{Z_1} \\
	& \les \bigg\|  \intt_\R  \sum_{\substack{n=n_1+n_2+n_3,\\\nbar_{123}\in\X_*(n)}} \frac{1}{\jb{\tau-n^3}^\dl \jb{\tau-\ld}} \intt_{\ld = \tau_1 + \tau_2 + \tau_3} \prod_{j=1}^3 \jb{n_j}^\frac12 | \ft{u}_j(\tau_j, n_j)| d\ld \bigg\|_{\l^p_n L^{q_0}_\tau}.
	\end{align*}	
	Let $f_j(\s,n) = \jb{n}^\frac12 |\ft{u}_j (\s + n^3, n)|$, $j=1,2,3$, $\bar{s}=\s_1+\s_2+\s_3$ and proceed as in \eqref{change_variables}. Using Minkowski's and H\"older's inequalities gives
	\begin{align*}
	&\| \D  \mathcal{NR}_*(u_1,u_2,u_3) \|_{Z_1} \\
	&\les \intt_{\s_1,\s_2,\s_3} \bigg\|  \sum_{\substack{n=n_1+n_2+n_3,\\\nbar_{123}\in\X_*(n)}}  \frac{1}{\jb{\Phi(\nbar_{123})}^\dl \jb{\tau - n^3 -\bar{\s} + \Phi(\nbar_{123})}^{1-\dl}}  \prod_{j=1}^3 f_j(\s_j, n_j)  \bigg\|_{\l^p_n L^{q_0}_\tau} \\
	& \les \intt_{\s_1,\s_2,\s_3}\bigg\| \sum_{\mu} \frac{|\mu|^\eps}{\jb{\mu}^\dl \jb{\tau - n^3-\bar{\s} + \mu}^{1-\dl}}  \bigg( \sum_{\nbar_{123} \in \X_*^\mu (n)} \prod_{j=1}^3 |f_j(\s_j , n_j)|^p \bigg)^\frac1p \bigg\|_{\l^p_n L^{q_0}_\tau},
	\end{align*}
	since from the standard divisor counting estimate, we have that $|\X_*^\mu(n)| \les_\eps |\mu|^\eps$, for any $\eps>0$. Choosing $\eps \leq \dl$ and applying Schur's test with $1 + \frac{1}{q_0} = \frac1p + \frac1q$, we obtain
	\begin{align*}
	\| \D  \mathcal{NR}_*(u_1,u_2,u_3) \|_{Z_1} 
	& \les \intt_{\s_1,\s_2,\s_3}\bigg\| \bigg( \sum_{\nbar_{123} \in \X_*^\mu(n)} \prod_{j=1}^3 |f_j(\s_j , n_j)|^p \bigg)^\frac1p \bigg\|_{\l^p_n \l^{p}_\mu}  
	\les \prod_{j=1}^3 \|f_j(\s, n) \|_{L^1_\s,\l^p_n},
	\end{align*}
	for $\dl< \frac{1}{5p}$. 
	Consequently, using H\"older's and Minkowski's inequalities, it follows that
	\begin{align*}
	\| \D  \mathcal{NR}_*(u_1,u_2,u_3) \|_{Z_1} 
	& \les \prod_{j=1}^3 \| \jb{n}^\frac12 \jb{\s}^{1-4\dl+} \ft{u}_j(\s + n^3, n) \|_{L^{q_0}_\s \l^p_n} 
	 \les \prod_{j=1}^3 \|u_j\|_{Z_0},
	\end{align*}
	for $\dl < \frac{1}{4p}$. 
\end{proof}

\begin{remark}\rm \label{rm:w}
	(i) The terms $\D  \mathcal{NR}_A$, $\D  \mathcal{NR}_B$ cannot be estimated in a similar manner because $\jb{n}^\frac12 |n_1|$ is not controlled by $(\jb{n_1} \jb{n_2} \jb{n_3})^\frac12$. This motivates the splitting of the Duhamel operator for these contributions, since we require a convolution operator whose kernel has a negative power of the resonance relation $\Phi$, needed to control the loss of derivative from the nonlinearity, without using the largest modulation.
	
	\smallskip
	\noi(ii) Consider the estimate 
	$$\big\| \D  \mathcal{NR}_D(u_1,u_2,u_3) \big\|_{X^{\frac12, b}_{p,q}} \les \prod_{j=1}^3 \|u_j\|_{X^{\frac12, b}_{p,q}},$$
	for some $b\geq 0$, $2\leq q < \infty$. The region $\X_D(n)$ includes the case when $|n_1| \sim |n_2| \sim |n_3|$, $\max\limits_{j=1,2,3}|n_j| \les |\Phi(\nbar_{123})| \ll \max\limits_{j=1,2,3} |n_j|^2$. When attempting to show the above estimate under the nearly-resonant assumption, we must impose the conditions
	\begin{align*}
	\max\bigg(1 - \frac{1}{2q},  1 + \frac1q - \frac1p \bigg)&< b <1,
	\end{align*}
	which motivate our choice of $b=1-$ and $q=\infty-$ for the definition of the $Z_0$ space.
\end{remark}

\begin{lemma}
	Let $*\in\{A,B\}$. The following estimate holds
	\begin{align*}
	\| \B_*^{0}(u_1,u_2,u_3) \|_{Z_1} \les \|u_1\|_{Y_0} \|u_2\|_{Y_0} \|u_3\|_{Y_0}.
	\end{align*}
\end{lemma}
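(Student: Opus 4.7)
The plan is to proceed in parallel with the proof of Lemma~\ref{lm:EY}, using the kernel bound \eqref{estimateK0} for $K_0$ in place of \eqref{estimateKY_new} for $K_G$. A crucial feature of \eqref{estimateK0} is that its right-hand side is \emph{independent of $\ld$}, so after passing to absolute values the $\ld$-convolution $|\ft u_1|\ast|\ft u_2|\ast|\ft u_3|$ integrates to $\prod_{j=1}^3 \|\ft u_j(\cdot,n_j)\|_{L^1_\tau}$. Choosing $\al = 4\dl$ in \eqref{estimateK0} yields the pointwise bound
\begin{align*}
\big|\Ft_{t,x}\B_*^{0}(u_1,u_2,u_3)(\tau,n)\big|\les\sum_{\substack{n=n_1+n_2+n_3\\\nbar_{123}\in\X_*(n)}}\frac{|n_1|}{\jb{\tau-n^3}^{1+4\dl}\jb{\Phi(\nbar_{123})}^{1-4\dl}}\prod_{j=1}^3 g_j(n_j),
\end{align*}
where $g_j(n):=\|\ft u_j(\cdot,n)\|_{L^1_\tau}$. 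Taking the $\jb{n}^{1/2}\jb{\tau-n^3}^{b_1}$-weighted $\l^p_n L^{q_0}_\tau$-norm and using the observation $\|\jb{\tau-n^3}^{b_1-1-4\dl}\|_{L^{q_0}_\tau}=\|\jb{\tau-n^3}^{-5\dl}\|_{L^{q_0}_\tau}\les 1$ (since $5\dl\,q_0 = 5/4 > 1$), together with the estimate $\|\jb{n}^{1/2} g_j\|_{\l^p_n}\les\|u_j\|_{Y_0}$ obtained from H\"older in $\tau$ with exponents $(r_0,r_0')$ (the $L^{r_0'}$-integrability of $\jb{\tau-n^3}^{-1/2}$ follows from $r_0 < 2$), the task reduces to the purely spatial bound
\begin{align*}
\left\|\sum_{\substack{n=n_1+n_2+n_3\\\nbar_{123}\in\X_*(n)}}\frac{\jb{n}^{1/2}|n_1|}{\jb{\Phi(\nbar_{123})}^{1-4\dl}}\prod_{j=1}^3 g_j(n_j)\right\|_{\l^p_n}\les\prod_{j=1}^3 \|\jb{n}^{1/2}g_j\|_{\l^p_n}.
\end{align*}

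For the spatial multiplier I invoke the lower bound $|\Phi(\nbar_{123})|\gtrsim\max(|n_1|,|n_2|)^2$ on $\X_*(n)$ for $*\in\{A,B\}$ from the lemma in Section~\ref{sec:preliminaries}. A short case analysis (for $*=A$, $\jb{n}\sim\jb{n_1}$ and $\jb{\Phi}^{1/2}\gtrsim|n_1|$; for $*=B$, splitting according to whether $|n|\les|n_1|$ or $|n_1|\ll|n|$ and using $\jb{\Phi}^{1/2}\gtrsim|n_2|\sim\max(|n|,|n_1|)$) produces the uniform bound
\begin{align*}
\frac{\jb{n}^{1/2}|n_1|}{\jb{\Phi(\nbar_{123})}^{1-4\dl}}\les\jb{\max(|n|,|n_1|,|n_2|)}^{-1/2+8\dl},
\end{align*}
providing the crucial negative power of the highest frequency needed to absorb the derivative loss.

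To close the spatial estimate I localize $\Phi(\nbar_{123})=\mu$ and dyadically $|n_j|\sim N_j$, and adapt the divisor-counting argument from the proof of Lemma~\ref{lm:EY}: after the change of variables $a=n_1+n_2$, $b=n_1+n_3$, $c=n_2+n_3$ (so $a+b+c=2n$ and $3abc=\mu$), the standard divisor estimate gives $|\X_*^\mu(n)|\les_{\eps}|\mu|^\eps$ for any $\eps>0$. Summing over $\mu$ via H\"older with $\|\jb{\mu}^{-(1/2-4\dl-\eps)}\|_{\l^{q_1}_\mu}<\infty$ for some $q_1$ slightly larger than $2$, and then applying Minkowski and Young's inequalities in $\l^p_n$ after the dyadic localization, produces the desired product of $\|\jb{n}^{1/2}g_j\|_{\l^p_n}$ up to a convergent $N_j$-sum. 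The main obstacle is distributing the multilinear $\l^p_n$ norm so that all three factors are measured in $\l^p_n$ (rather than the weaker $\l^{r_2}_n$ appearing on two factors in the proof of Lemma~\ref{lm:EY}); this is possible because the kernel $K_0$ provides the stronger spatial smoothing $\jb{\Phi}^{-(1-4\dl)}$ --- essentially double what one gets from $K_G$ --- which suffices to cover the derivative loss while keeping all three factors in the $\l^p_n$ framework.
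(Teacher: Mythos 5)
Your reduction is exactly the paper's: choosing $\al=4\dl$ in \eqref{estimateK0}, integrating out the time variables (using $5\dl q_0=\tfrac54>1$), and converting $\|\ft u_j(\cdot,n)\|_{L^1_\tau}$ into $Y_0$-norms by H\"older with $(r_0,r_0')$ all match, and the purely spatial trilinear inequality you reduce to is indeed true. The gap is in your proof of that spatial inequality: you spend the single factor $\jb{\Phi}^{-(1-4\dl)}$ twice. The uniform bound $\jb{n}^{1/2}|n_1|\,\jb{\Phi}^{-(1-4\dl)}\les \jb{n_{\max}}^{-1/2+8\dl}$ consumes all of $\jb{\Phi}^{-(1-4\dl)}$, so no factor $\jb{\mu}^{-(1/2-4\dl-\eps)}$ remains for your H\"older in $\mu$; conversely, reserving $\jb{\mu}^{-(1/2-4\dl-\eps)}$ leaves only $\jb{\Phi}^{-(1/2+\eps)}\les \max(|n_1|,|n_2|)^{-1-2\eps}$ against $\jb{n}^{1/2}|n_1|\sim\max(|n_1|,|n_2|)^{3/2}$, an uncontrolled positive power of the top frequency: in $\X_A(n)$ with $|n_2|,|n_3|=O(1)$ and $n_2+n_3=1$ one has $\jb{\mu}\sim\jb{n_1}^2$, the full multiplier is $\sim|n_1|^{-1/2+8\dl}$, while $\jb{\mu}^{-(1/2-4\dl-\eps)}\sim|n_1|^{-1+8\dl+2\eps}$, so the factorization you need fails by $|n_1|^{1/2}$. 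The only consistent split uses $\jb{\Phi}^{3/4}\ges\jb{n}^{1/2}|n_1|$ and leaves $\jb{\mu}^{-(1/4-4\dl)}$; your H\"older in $\mu$ then forces an exponent larger than $4$, the Minkowski/Young step puts two factors in $\l^{q'}_n$ with $q'<\tfrac43$, and upgrading $\|g_j\|_{\l^{q'}_n}\les\|\jb{n}^{1/2}g_j\|_{\l^p_n}$ requires $p<4$ --- precisely the restriction this lemma is meant to remove. The divisor count only bounds the cardinality of the fiber $\{\Phi=\mu\}$ and cannot replace the missing decay.

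The paper closes the spatial step without any $\mu$-localization: on $\X_A(n)\cup\X_B(n)$ one has $|\Phi(\nbar_{123})|\sim\max_j|n_j|^2\,\min_l|n-n_l|$ (from $\Phi=3(n-n_1)(n-n_2)(n-n_3)$), so the multiplier is bounded by $\jb{n_1}^{1/2}\max_j\jb{n_j}^{-(1-8\dl)}\min_l\jb{n-n_l}^{-(1-4\dl)}$; attaching $\jb{n_1}^{1/2}$ to the first factor and applying H\"older with exponents $(p,p')$ in the convolution sum, the kernel raised to the power $p'$ is summable over the two free variables (both exponents exceed $1$ for $\dl<\tfrac{1}{8p}$, after replacing $\max_j\jb{n_j}$ by the remaining free frequency when the maximizing and minimizing indices coincide), which yields $\|\jb{n}^{1/2}\ft u_1\|_{\l^p_nL^1_\tau}\|\ft u_2\|_{\l^p_nL^1_\tau}\|\ft u_3\|_{\l^p_nL^1_\tau}$ directly. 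To repair your argument you must keep the factor $\min_l\jb{n-n_l}$ coming from the factorization of $\Phi$, rather than retaining only $|\Phi|\ges\max(|n_1|,|n_2|)^2$ plus a divisor count.
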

\begin{proof}
	Choosing $\al=4\dl$ in the kernel estimate \eqref{estimateK0}, gives
	\begin{align*}
	\| \B_*^{0}(u_1,u_2,u_3) \|_{Z_1} 
	& \les \bigg\|\sum_{\substack{n=n_1+n_2+n_3,\\ \nbar_{123}\in\X_*(n)}}   \intt_{\tau_1,\tau_2,\tau_3} \frac{\jb{n}^\frac12 |n_1| }{\jb{\tau-n^3}^{5 \dl} \jb{\Phi(\nbar_{123})}^{1-4\dl } } \prod_{j=1}^3 |\ft{u}_j (\tau_j, n_j)| \bigg\|_{\l^p_n L^{q_0}_{\tau}}\\
	&\les \bigg\|\sum_{\substack{n=n_1+n_2+n_3,\\ \nbar_{123}\in\X_*(n)}}   \frac{\jb{n}^\frac12 |n_1| }{ \jb{\Phi(\nbar_{123})}^{1-4\dl} } \prod_{j=1}^3 \|\ft{u}_j ( n_j)\|_{L^1_\tau} \bigg\|_{\l^p_n },
	\end{align*}
	by applying Minkowski's inequality in the last step and integrating in $\tau$. For $\nbar_{123}\in\X_*(n)$, we have $|\Phi(\nbar_{123})| \sim \max\limits_{j=1,2,3}|n_j|^2 \min\limits_{l=1,2,3}|n-n_l|$, which implies
	\begin{align*}
	\frac{\jb{n}^\frac12 |n_1|}{\jb{\Phi(\nbar_{123})}^{1-4\dl}} \les \frac{\jb{n_1}^\frac12}{\max\limits_{j=1,2,3}\jb{n_j}^{1 -8\dl } \min\limits_{l=1,2,3} \jb{n-n_l}^\frac12}.
	\end{align*}
	Applying H\"older and Minkowski's inequalities, it follows that
	\begin{align*}
	\| \B_*^{0}(u_1,u_2,u_3) \|_{Z_1} 
	&\les \big( \sup_n J(n)^{\frac{1}{p'}} \big) \| \jb{n}^\frac12 \ft{u}_1\|_{\l^p_n L^1_\tau} \prod_{j=2}^3 \|\ft{u}_j\|_{\l^p_n L^1_\tau},
	\end{align*}
	where $J(n)$ is defined as follows
	\begin{align*}
	J(n):= \sum_{n=n_1+n_2+n_3} \frac{1}{\max\limits_{j=1,2,3} \jb{n_j}^{(1-8\dl)p'} \min\limits_{l=1,2,3}\jb{n-n_l}^{(1-4\dl)p'}  }.
	\end{align*}
	Let $j,l\in\{1,2,3\}$ denote the indices at which the maximum and minimum in the definition of $J(n)$ are attained, respectively. If $j = l$, we can use the fact that $\jb{n_j} \ges \jb{n_i}$ for $i\in\{1,2,3\}\setminus\{j\}$ and sum in $n_i, n_j$. If $j\neq l$, we sum in $n_j, n_l$. Thus, $J(n) \les 1$ uniformly in $n$ for $\dl < \frac{1}{8p}$. The intended estimate, follows from H\"older's inequality.

\end{proof}

\begin{lemma}
	Let $*\in\{A,B\}$. Then, the following estimates hold for $j=1,2,3$
	\begin{align*}
	\| \B_*^{j}(u_1,u_2,u_3)\|_{Z_1} \les \|u_j\|_{Z_0} \prod_{\substack{k=1\\ k\neq j}}^3 \|u_k\|_{Y_0}.
	\end{align*}

\end{lemma}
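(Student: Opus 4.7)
The strategy is to mirror the proof of Lemma~\ref{lm:EY}, with two modifications driven by the stronger target norm $Z_1$ and by the need to exploit the largest modulation. My starting point is the kernel bound \eqref{KX+} for $K_{+}$ restricted to the region $\s_{\max}=|\s_j|$: choosing the free parameter equal to $2\dl$, I obtain
\[
\big|K_{+}\big(\tau-n^3,\ld-n^3,\Phi(\nbar_{123})\big)\big|\les \frac{\jb{\s_j}^{1-2\dl}}{\jb{\tau-\ld}\,\jb{\tau-n^3}\,\jb{\Phi(\nbar_{123})}^{1-2\dl}}.
\]
The $\jb{\tau-n^3}$ in the denominator partially cancels the $Z_1$-weight $\jb{\tau-n^3}^{b_1}=\jb{\tau-n^3}^{1-\dl}$, leaving a $\jb{\tau-n^3}^{-\dl}$ factor that provides $L^{q_0}_\tau$-integrability.

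I then control the spatial multiplier. Since $\nbar_{123}\in\X_A(n)\cup\X_B(n)$ gives $|\Phi(\nbar_{123})|\ges\max(|n_1|,|n_2|)^2$, combined with the structural relations $|n_1|\sim|n|$ in $\X_A$ and $|n_2|\sim\max(|n|,|n_1|)$ in $\X_B$, one checks directly that
\[
\frac{\jb{n}^{\frac12}|n_1|}{\jb{\Phi(\nbar_{123})}^{1-2\dl}}\les \max\big(\jb{n_1},\jb{n_2}\big)^{\frac12},
\]
so the Fourier--Lebesgue spatial weight is absorbed by the factor of largest spatial frequency. The temporal weight $\jb{\s_j}^{1-2\dl}=\jb{\s_j}^{b_0}$ matches \emph{exactly} the temporal weight of the $Z_0$-norm, which is the structural reason that the index of the largest modulation must coincide with the factor placed in $Z_0$ on the right-hand side of the estimate.

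With these reductions, the argument proceeds as in Lemma~\ref{lm:EY}: change variables $\s_k=\tau_k-n_k^3$, apply Minkowski in $(\s_1,\s_2,\s_3)$, and freeze the resonance at $\Phi(\nbar_{123})=\mu$. The inner $\tau$-norm reduces to estimating $\jb{\tau-n^3}^{-\dl}\sum_\mu \jb{\tau-n^3-\bar\s+\mu}^{-1} G(\mu,n)$ in $L^{q_0}_\tau\l^p_n$; choosing $\dl$ small enough so that $p<q_0=(4\dl)^{-1}$, a Young-type inequality controls this by $\|G\|_{\l^p_{n,\mu}}$. A divisor-counting bound $|\X_*^\mu(n)|\les(N_2N_3)^\eps$ (Lemma~\ref{lm:divisor}) followed by H\"older in the two free frequencies then distributes the estimate as claimed, with at most a $\jb{n_k}^{\eps}$ loss on the non-maximal factors; this loss is absorbed into the $Y_0$-norm via a Sobolev-type embedding for $\eps<\dl$.

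The main obstacle is the bookkeeping across the six cases $(*,j)\in\{A,B\}\times\{1,2,3\}$: one must verify that the absorption of $\jb{\s_j}^{1-2\dl}$, of the spatial multiplier, and of the Schur/Young exponents are simultaneously compatible with placing $u_j$ in $Z_0$ and $u_k$ in $Y_0$ for $k\neq j$, irrespective of which factor carries the largest frequency. Choosing $\dl<\tfrac{1}{5p}$ (matching the worst constraint encountered in Lemma~\ref{lm:EY} and the Young step above) unifies all six cases, and the remaining analysis is identical to that of Lemma~\ref{lm:EY}.
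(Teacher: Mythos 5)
Your reduction via \eqref{KX+} and the identification of the $j$-th factor (the one carrying $\s_{\max}$) as the factor to be placed in $Z_0$ match the paper's starting point, but the closing scheme has a genuine gap in the modulation bookkeeping. First, choosing the kernel parameter so that the weight is exactly $\jb{\s_j}^{1-2\dl}=\jb{\s_j}^{b_0}$ is not a feature but a defect: the $Z_0$-norm controls $\jb{\s}^{b_0}\ft{u}_j$ only in $L^{q_0}_\tau$, so you must retain a strictly smaller power in order to have a surplus $\jb{\s_j}^{-\dl}$ to spend on the H\"older step that changes the Lebesgue exponent in $\tau_j$; the paper takes $1-\be=b_0-\dl$, i.e. $\jb{\s_j}^{1-3\dl}$, precisely for this reason. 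Second, and more seriously, the route you borrow from Lemma~\ref{lm:EY} --- Minkowski in $(\s_1,\s_2,\s_3)$, freezing $\Phi(\nbar_{123})=\mu$, a Schur/Young bound over $\mu$, divisor counting, H\"older in the free frequencies --- ends with every factor measured in $L^1$ of its own modulation variable. To pass from $L^1_{\s_j}$ back to the $L^{q_0}_{\s_j}$ controlled by $Z_0$ you need spare decay $\jb{\s_j}^{-c}$ with $cq_0'>1$, i.e. $c>1-4\dl$, while the headroom left by the kernel is $0$ with your choice (and only $\dl$ with the paper's). Thus the quantity your scheme produces, essentially $\|\jb{n}^{\frac12}\jb{\s_j}^{b_0}\ft{u}_j\|_{L^1_{\s_j}\l^p_n}$, is not bounded by $\|u_j\|_{Z_0}$, and the argument cannot close: the Minkowski-in-modulations/divisor-counting framework of Lemma~\ref{lm:EY} is structurally incompatible with exploiting the largest modulation, which is the whole point of the $\B^{j}_*$ terms.

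The paper's proof avoids this entirely: for these terms it never changes to the variables $(\s_1,\s_2,\s_3)$ and uses no divisor counting. After attaching $\jb{n_1}^{\frac12}$ to the $Z_0$ factor, the remaining multiplier is bounded pointwise by $\jb{n_{\max}}^{-(1-6\dl)}\jb{n-n_l}^{-(1-3\dl)}$ (note you discard the decay in $\min_l|n-n_l|$ when you settle for $\max(\jb{n_1},\jb{n_2})^{\frac12}$); the convolution structure in $\tau$ is kept and Young's inequality is applied with exponents $(r,q_1,1,1)$, $\frac1r=1+\frac{1}{q_0}-\frac{1}{q_1}$, so the $Z_0$ factor only has to move from $L^{q_1}_\tau$ to $L^{q_0}_\tau$, a cost of $\jb{\s}^{-\dl}$ which is exactly what the choice $b_0-\dl$ provides; the frequency sum is then closed by H\"older in $n$ with the elementary bound $\sup_n J(n)\les 1$ under $\dl<\frac{1}{6p}$. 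To repair your write-up, take the kernel power $b_0-\dl$ and replace the Minkowski/Schur/divisor-counting step by this Young-plus-$J(n)$ argument (or otherwise keep the $\tau_j$-integration attached to the $Z_0$ factor rather than reducing it to $L^1_{\s_j}$).
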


\begin{proof}
	We will only show the estimate for $j=1$, as the remaining estimates follow an analogous proof. Let $*\in\{A,B\}$. From \eqref{KX+} with $1-\be= b_0 - \dl$ and for $\nbar_{123} \in \X_*(n)$ we have
	\begin{equation*}
	\jb{n}^\frac12 |n_1| \jb{\tau-n^3}^{b_1}\big|K_{+}\big(\tau-n^3, \ld - n^3, \Phi(\nbar_{123})\big)\big| \les \frac{\jb{n_1}^{\frac12} \jb{\tau_1 - n_1^3}^{1-3\dl}}{\jb{\tau-\ld} \jb{\Phi(\nbar_{123})}^{\frac12-3\dl} \jb{n-n_l}^{\frac12}},
	\end{equation*}
	where $|n-n_l| = \min_{j=1,2,3}|n-n_j|$.
	Let $f(\tau,n) = \jb{n}^\frac12 \jb{\tau_1 - n_1^3}^{b_0 - \dl}|\ft{u}_1(\tau,n)|$. Then, using Minkowski's and Young's inequalities, we have
	\begin{align*}
	&\big\| \B^{1}_* (u_1,u_2,u_3) \big\|_{Z_1}\\
	& \les \bigg\| \sum_{\substack{n=n_1+n_2+n_3,\\ \nbar_{123} \in \X_*(n)}} \intt_{\tau_1,\tau_2,\tau_3} \frac{1}{\jb{\Phi(\nbar_{123})}^{\frac12-3\dl} \jb{n-n_l}^{\frac12}} \bigg(\frac{1}{\jb{\cdot}} \ast f(\cdot, n_1) \ast_{j=2,3} |\ft{u}_j(\cdot, n_j)|\bigg)(\tau) \bigg\|_{\l^p_n L^{q_0}_\tau} \\
	& \les \bigg\| \sum_{\substack{n=n_1+n_2+n_3,\\ \nbar_{123} \in \X_*(n)}} \frac{1}{\jb{\Phi(\nbar_{123})}^{\frac12 - 3\dl}\jb{n-n_l}^{\frac12}} \|f(n_1)\|_{L^{q_1}_\tau} \prod_{j=2}^3 \|\ft{u}_j (n_j) \|_{L^1_\ld} \bigg\|_{\l^p_n}.
	\end{align*}
	Using H\"older's inequality, we obtain
	\begin{align*}
	\big\| \B^{1}_* (u_1,u_2,u_3) \big\|_{Z_1} \les \bigg(\sup_n J(n)\bigg)^{\frac{1}{p'}} \|f\|_{\l^p_n L^{q_1}_\tau} \prod_{j=2}^3 \|u_j\|_{\l^p_n L^1_\tau} ,
	\end{align*}
	where 
	\begin{align*}
	J(n) = \sum_{n=n_1+n_2+n_3} \frac{1}{\jb{n_{\max}}^{p'(1-6\dl)} \jb{n-n_l}^{p'(1-3\dl)}} \les \sum_{n_i, n_l} \frac{1}{\jb{n_i}^{p'(1-6\dl)} \jb{n-n_l}^{p'(1-3\dl)}} \les 1,
	\end{align*}
	for some distinct $n_i, n_l \in \{n_1,n_2,n_3\}$ and $\dl < \frac{1}{6p}$. The intended estimate follows from applying H\"older's inequality.

\end{proof}

\subsection{Standard quintic terms}\label{sec:quintic}
In this section, we focus on estimating the quintic terms in \eqref{quintic_terms}. 

We know how to control the spatial multiplier in the space-time Fourier transform of the terms in \eqref{quintic_terms}, but it remains to control the weight $\jb{n}^\frac12$ from the norm. Before doing so, we must take into account the new `resonances' introduced by using second iteration. In particular, in the sum $n=n_1+\ldots + n_5$, we can have $n_i+n_j = 0$ for distinct $i,j \in \{1,\ldots,5\}$ as long as they do not belong to the same generation of frequencies. For the estimates to hold, we need the largest frequency to not be in a pairing and correspond to a $w$ term. Otherwise, we will use the equation for $u$ \eqref{eq:u}, which introduces new septic terms.

Looking at $\mathcal{Q}$ in \eqref{quintic} in more detail, note that the sum in \eqref{quintic} over $n=n_1+\ldots + n_5$ does not exclude all resonances, i.e., we can have $n_i + n_j =0$ for distinct $i, j \in \{1,\ldots, 5\}$. If this holds, we say that $(i,j)$ is a pairing.

Since the space-time Fourier transform of the terms in \eqref{quintic_terms} is controlled by \eqref{quintic}, we want to take advantage of this common form and show a general estimate for \eqref{quintic}. This estimate (Proposition~\ref{standard_quintic}) requires one of the following conditions:
\begin{enumerate}
	\item There are no pairings in $(n_1, \ldots, n_5)$ and the largest frequency corresponds to a function in $Z_0$;
	\item There is one pairing $(i,j)$ and the largest frequency in $\{|n_k|: \ 1\leq k \leq 5, \ k\neq i,j \}$ corresponds to a function in $Z_0$;
	\item There are two pairings and the remaining frequency corresponds to a functions in $Z_0$.
\end{enumerate}
Note that if (1), (2) or (3) hold, we can always use the largest frequency which is not in a pairing to control the spatial weight from the norm $\jb{n}^\frac12$. If the contributions do not satisfy any of the above conditions, then the largest frequency that is not in a pairing corresponds to a function $u$ and we want to use the equation for $u$ again. This leads to one quintic term that satisfies the assumptions above and two septic terms, which are easily estimated.

To further clarify, let $\mathcal{Q}'(u_1, \ldots, u_5)$ denote a contribution in \eqref{quintic_terms}, $u_j \in\{u, \conj{u}, w, \conj{w}\}$. Let $n_j$ correspond to the spatial Fourier variable of $\ft{u}_j$, $j=1,\ldots,5$. If $n_1$ is the largest frequency that is not in a pairing and $u_1 \in \{w,\conj{w}\}$, then we keep the contribution as is. Otherwise, $u_1 \in \{u,\conj{u}\}$ and we will use the equation \eqref{eq:u} to replace the first entry in $\mathcal{Q}'$. For simplicity, assume that $u_1 = u$, then we have
\begin{align*}
\mathcal{Q}' (u_1, \ldots, u_5) &= \mathcal{Q}' (w, u_2, \ldots, u_5) \\
& +  \mathcal{Q}' \big(\varphi_T\cdot  \G_{A,\geq}[w, \conj{u}, u], u_2, \ldots, u_5 \big) +  \mathcal{Q}'\big(\varphi_T\cdot \G_{A,>}[w, u, \conj{u}], u_2 \ldots, u_5\big) \\
& + \mathcal{Q}' \big(\varphi_T\cdot \G_{B,\geq}[w, \conj{w}, u], u_2, \ldots, u_5\big) + \mathcal{Q}' \big(\varphi_T\cdot \G_{B,>}[w, w, \conj{u}], u_2, \ldots, u_5\big).
\end{align*}
By carefully examining the frequencies and pairings of the terms in \eqref{quintic_terms} and applying the above modification, we obtain the final equation for $w$. We have decided to not include this full equation for $w$ due to its length.

All the resulting quintic and septic terms arising from \eqref{quintic_terms}, can be estimated by the two following propositions.

\begin{proposition}\label{standard_quintic}
	Let $\mathcal{Q}$ as defined in \eqref{quintic} where the first factor has the largest spatial Fourier frequency which is not in a pairing.
	Then, the following estimate holds
	\begin{align*}
	\big\| \mathcal{Q}(u_1, \ldots, u_5) \big\|_{Z_1} \les \|u_1\|_{Z_0} \prod_{j=2}^5 \|u_j\|_{Y_0}.
	\end{align*}
\end{proposition}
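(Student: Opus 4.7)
The strategy is to exploit the hypothesis that the largest non-paired frequency belongs to $u_1$ in order to move the spatial weight $\jb{n}^{1/2}$ of the $Z_1$-norm onto $u_1$, where it can be absorbed by the $Z_0$-norm. From there, standard $X^{s,b}$-type manipulations reduce the estimate to a multilinear sum in the spatial frequencies, which can be handled by a case analysis on the pairing structure.

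The first step is to use the hypothesis: any pairing $n_i + n_j = 0$ does not contribute to $n = n_1 + \cdots + n_5$, so $n$ is the sum of the non-paired frequencies, and by maximality of $|n_1|$ among these we have $\jb{n}^{1/2} \lesssim \jb{n_1}^{1/2}$. Next I would change variables to modulations $\sigma_j = \tau_j - n_j^3$, so that $\tau - \lambda = (\tau-n^3) - \bar\sigma + \Phi_5$ with $\Phi_5 = n^3 - \sum n_j^3$ and $\bar\sigma = \sum \sigma_j$, and pull the $\sigma_j$-integrals outside the $\ell^p_n L^{q_0}_\tau$-norm by Minkowski. An application of Lemma~\ref{lm:convolution} with $\alpha = \delta q_0$ (small) and $\beta = (1-\theta)q_0 > 1$ evaluates the remaining $L^{q_0}_\tau$-integral of $\jb{\tau-n^3}^{-\delta}\jb{(\tau-n^3)-\bar\sigma+\Phi_5}^{-(1-\theta)}$ and produces a factor $\jb{\bar\sigma - \Phi_5}^{-\delta}$.

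At this point one is reduced to bounding, uniformly in $\vec\sigma$,
\begin{equation*}
\bigg\|\sum_{n = n_1 + \cdots + n_5} \jb{n_1}^{1/2} \prod_{j=1}^5 g_j(\sigma_j, n_j)\bigg\|_{\ell^p_n},
\end{equation*}
where $g_j(\sigma, n) = |\ft{u}_j(\sigma + n^3, n)|$, and then integrating back in $\vec\sigma$ against the modulation weights of $Z_0$ (for $u_1$) and $Y_0$ (for $u_j$, $j\geq 2$) via H\"older. I would perform a case analysis based on the number of pairings. With no pairings, the five-fold $\ell^p$-convolution is handled by combining Young with four applications of the embedding $Y_0 \embeds C(\R;\FL^{1/2,p}(\T))$ to extract pointwise-in-time control, while the $\jb{n_1}^{1/2}$-weight is absorbed into the $Z_0$-norm of $u_1$. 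With one pairing $n_i + n_j = 0$, $i,j\neq 1$, Cauchy--Schwarz gives $\sum_{n_i+n_j=0}|\ft{u}_i(n_i)||\ft{u}_j(-n_i)| \lesssim \|\jb n^{1/2}\ft u_i\|_{\ell^p_n}\|\jb n^{1/2}\ft u_j\|_{\ell^p_n}$ using the summability of $\jb{n}^{-1}$ for every $p<\infty$, reducing to the 3-fold convolution case; with two pairings, only $n_1=n$ is free and two applications of Cauchy--Schwarz suffice.

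The hard part will be the bookkeeping: matching the $L^{q_0}$-exponent in the output, the $L^{q_0}$-exponent of $Z_0$, and the $L^{r_0}$-exponents of the four $Y_0$-norms, so that after H\"older in $\vec\sigma$ the factor $\jb{\bar\sigma - \Phi_5}^{-\delta}$ provides a small but positive power of $\delta$ of slack (this plays the role of the resonance gain one would normally extract from $\jb{\Phi_5}^{-1/2}$, which is unavailable since $\Phi_5$ does not factor). A further delicate point is that the pairing Cauchy--Schwarz must distribute the $\jb n^{1/2}$-weight onto \emph{both} paired factors, since $\sum_n\jb n^{-1}$ is only barely summable; this is precisely what forces the hypothesis that the largest non-paired frequency is singled out as $u_1$ (with the stronger $Z_0$-norm) rather than being placed inside a pairing.
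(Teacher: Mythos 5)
Your opening reductions (transferring the weight via $\jb{n}^{1/2}\les\jb{n_1}^{1/2}$, the change of variables to modulations, Minkowski, and the Cauchy--Schwarz treatment of the paired frequencies, including the two-pairing case) agree with the paper. The genuine gap is in the core of the argument: the spatial sum in the no-pairing case (and the residual trilinear sum in the one-pairing case). Young's inequality in the form $\ell^p\ast\ell^1\ast\ell^1\ast\ell^1\ast\ell^1\to\ell^p$ requires four factors in $\ell^1_n$, and the embedding you invoke does not supply this: $\|\ft{u}\|_{\ell^1_n}\les\|\jb{n}^{1/2}\ft{u}\|_{\ell^p_n}$ needs $\tfrac12 p'>1$, i.e.\ $p<2$, so it fails for every $p\geq 2$ (dyadically, each factor localized at $N_j$ costs $N_j^{1/2-1/p}$, a positive power for $p>2$). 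This is exactly the obstruction that makes the quintic estimate nontrivial in the range $4\leq p<\infty$. The factor $\jb{\bar\s-\Phi_5}^{-\dl}$ you extract from Lemma~\ref{lm:convolution} cannot repair this: it is merely bounded by $1$ uniformly in $\vec\s$, it couples $\vec\s$ to the frequencies so it cannot simply be ``used after H\"older in $\vec\s$'', and even in the most favorable regime it yields at best $N_1^{-3\dl}$, which is no match for losses of size $N_2^{2(1/2-1/p)}$ when two frequencies are comparable to $N_1$. Likewise, ``reducing to the 3-fold convolution case'' after one pairing is not a reduction to anything already established: that trilinear sum with two factors measured only in $\FL^{1/2,p}$-type norms is precisely where the naive estimates fail for large $p$.

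What the paper does instead, and what is missing from your proposal, is a counting argument on level sets of the resonance function. After Minkowski and Schur's test the resonance value $\mu=\Psi(n,\nbar_{1\ldots5})$ is kept as a summation variable in $\ell^{q_1}_\mu$; since $\mu$ ranges (for fixed $n$ and dyadic localizations) over a set of cardinality $O(N_2^4)$ (resp.\ $O(N_2^2)$ with one pairing), H\"older in $\mu$ costs only $N_2^{4/q_1}=N_2^{O(\dl)}$. Then, for fixed $\mu$, the divisor-counting Lemma~\ref{lm:divisor} (and, in the one-pairing case, the fact that $\Phi(\nbar_{123})=\mu$ is quadratic in $n_1$, so has at most two roots) bounds the number of admissible frequency tuples by $N_2^{\eps}$; combined with Cauchy's inequality with an optimizing parameter, this lets the sum close using $\sup_{n_1}f_1$ and unweighted $\ell^2_n$ norms of the remaining factors (each comparable to $N_j^{-1/p}\|u_j\|_{Y_0}$), rather than $\ell^1_n$ norms, and the dyadic sums converge after the case split $N_3\gtrless N_2^{4\sqrt{\dl}}$. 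Without some substitute for this level-set counting, your outline does not close for any $p\geq 2$.
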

\begin{proof}\
	
	\noi\underline{\textbf{Case 1: no pairing}}
	
	\noi Let $\P_{N_j}$ denote the Dirichlet projection onto $\jb{n_j} \sim N_j$, $j=1,\ldots, 5$. Since there is no pairing we have $|n| \les |n_1|$, therefore using Minkowski's inequality gives 
	\begin{multline*}
	\| \mathcal{Q}(u_1, \ldots, u_5) \|_{Z_1} 
	 \les \sum_{N_2, \ldots, N_5} \intt_{\tau_1, \ldots, \tau_5} \bigg\| \sum_{n=n_1 + \ldots + n_5}  \frac{1}{\jb{\tau - \tau_1 - \ldots - \tau_5}^{1-\theta}}\\\times  \jb{n_1}^\frac12 |\ft{u}_1 (\tau_1,n_1)| \prod_{j=2}^5 |\ft{\P_{N_j} u}(\tau_j,n_j)| \bigg\|_{\l^p_n L^{q_0}_\tau}.
	\end{multline*}
	Using the change of variables $\s_j = \tau_j  - n_j^3$, $j=1,\ldots, 5$, and Schur's test, we get
	\begin{align*}
	\| \mathcal{Q} (u_1, \ldots, u_5) \|_{Z_1} & \les \sum_{N_2, \ldots, N_5} \intt_{\s_1,\ldots, \s_5} \bigg\| \sum_{n=n_1 + \ldots + n_5}  \frac{1}{\jb{\tau - n^3 - \s_1 - \ldots - \s_5 + \Psi(n, \nbar_{1\ldots 5})}} \\
	&\phantom{XX} \times \jb{n_1}^\frac12 |\ft{u}_1 (\s_1 + n_1^3,n_1)| \prod_{j=2}^5 |\ft{\P_{N_j} u}(\s_j + n_j^3,n_j)| \bigg\|_{\l^p_n L^{q_0}_\tau} \ d\s_1 \cdots d\s_5 \\
	& \les \sum_{N_2, \ldots, N_5} \intt_{\s_1, \ldots, \s_5} \bigg\| \sum_{\substack{n=n_1 + \ldots + n_5,\\ \Psi(n,\nbar_{1\ldots 5})= \mu}} \prod_{j=1}^5 f_j(\s_j, n_j) \bigg\|_{\l^p_n \l^{q_1}_\mu},
	\end{align*}
	where $\Psi(n, \nbar_{1\ldots 5}) = n^3 - n_1^3 - \ldots - n_5^3$, $f_1(\s,n) = \jb{n}^\frac12 |\ft{u}_1(\s+n^3,n)|$ and $f_j(\s,n) = |\ft{P_{N_j} u_j}(\s+n^3,n)|$, $j=2,\ldots,5$. 
	Note that we can trivially restrict $\mu$ to the following region
	\begin{multline*}
	A(n,N_2,\ldots, N_5) = \big\{ \mu\in\Z: \ \mu = n - (n-n_2 - \ldots - n_5)^3 - n_2^3 - \ldots - n_5^3 , \\ |n_j|\sim N_j, \ j=2,\ldots,5 \big\},
	\end{multline*}
	which satisfies $|A(n,N_2,\ldots,N_5)| \les N^4_2$ for fixed $n$.
	Thus, it follows from H\"older's inequality in $\mu$ that 
	\begin{align}
	 \| \mathcal{Q} (u_1, \ldots, u_5)\|_{Z_1} 
	& \sim \sum_{N_2, \ldots, N_5} \intt_{\s_1, \ldots, \s_5} \bigg\| \1_{\mu \in A(n,N_2, \ldots, N_5)}\sum_{\substack{n=n_1 + \ldots + n_5,\\ \Psi(n,\nbar_{1\ldots 5})= \mu }}  \prod_{j=1}^5 f_j(\s_j,n_j) \bigg\|_{\l^p_n \l^{q_1}_\mu} \nonumber \\
	& \les  \sum_{N_2, \ldots, N_5} N_2^{\frac{4}{q_1}} \intt_{\s_1, \ldots, \s_5} \bigg\| \sum_{\substack{n=n_1 + \ldots + n_5,\\ \Psi(n,\nbar_{1\ldots 5})= \mu }}\prod_{j=1}^5 f_j(\s_j,n_j)  \bigg\|_{\l^p_n \l^\infty_\mu}. \label{aux_quintic}
	\end{align}
	
	Now, we consider two distinct cases depending on the size of the frequencies. 
	
	\vspace{2mm}
	\noi\underline{\textbf{Subcase 1.1: }$N_3 \geq N_2^{4\sqrt{\dl}}$}
	
	\noi Using Cauchy's inequality with $\al>0$, omitting the time dependence, we have
	\begin{align*}
	 \sum_{\substack{n=n_1 + \ldots + n_5,\\ \Psi(n,\nbar_{1\ldots 5})= \mu}} \prod_{j=1}^5 f_j (n_j) 
	& \les \sum_{\substack{n=n_1 + \ldots + n_5,\\ \Psi(n,\nbar_{1\ldots 5})= \mu}} f_1(n_1) \big(\al|f_2(n_2) f_3(n_3)|^2 + \al^{-1}|f_4(n_4)f_5(n_5)|^2 \big) \\
	&\les \sum_{n_2,n_3} \sum_{(n_1,n_4, n_5)\in B(n,n_2,n_3,\mu)} \al f_1(n_1) |f_2(n_2) f_3(n_3)|^2 \\
	& \phantom{XXXX}+ \sum_{n_4,n_5} \sum_{(n_1, n_2, n_3)\in B(n,n_4,n_5,\mu)} \al^{-1} f_1(n_1) |f_4(n_4) f_5(n_5)|^2,
	\end{align*}
	where
	\begin{multline*}
	B(k,k_1,k_2,\mu) := \big\{ (n_1,n_2,n_3)\in\Z^3: \ n_1 + n_2 + n_3 = k - k_1 - k_2 =: l , \\ 3(n_2+n_3) (l - n_2)(l - n_3) = \mu - k^3 + k_1^3 + k_2^3 + l^3 \big\}.
	\end{multline*}
	Taking a supremum in $n_1$, we obtain
	\begin{multline*}
	\sum_{\substack{n=n_1 + \ldots + n_5,\\ \Psi(n,\nbar_{1\ldots 5})= \mu}} \prod_{j=1}^5 f_j (n_j) \les \al \sup_{|n-n_1| \les N_2} f_1(n_1) \sum_{n_2,n_3} |B(n,n_2,n_3, \mu)| \cdot |f_2(n_2)f_3(n_3)|^2 \\
	+ \al^{-1} \sup_{|n-n_1| \les N_2} f_1(n_1) \sum_{n_4,n_5} |B(n,n_4,n_5, \mu)| \cdot |f_4(n_4)f_5(n_5)|^2.
	\end{multline*}
	In order to estimate $|B(n, n_2,n_3, \mu)|, |B(n, n_4,n_5, \mu)|$, we use Lemma~\ref{lm:divisor}. For the first one, to count the choices of $(n_1,n_4,n_5)$ it suffices to count the number of divisors $l-n_4, l-n_5$, where $l=n-n_2-n_3$, of $\tilde{\Psi}:=3(n_4+n+5)(n-n_2-n_3-n_4)(n-n_2-n_3-n_5) = 3(n_4+n_5)(l-n_4)(l-n_5)$. If $|n|\gg |n_2|$, then
	\begin{align*}
	|\tilde{\Psi}| \sim |(n_4 + n_5)(n-n_2-n_3-n_4)(n-n_2-n_3-n_5)| \les |n|^3 \implies |\tilde{\Psi}|^\eps \leq |\tilde{\Psi}|^\frac13 \les |n|,
	\end{align*}
	for any $\eps>0$.
	Otherwise, $|n| \les |n_2|$ and we have
	\begin{align*}
	|\tilde{\Psi}| \sim |(n_4 + n_5)(n-n_2-n_3-n_4)(n-n_2-n_3-n_5)| \les |n_2|^3 \implies |\tilde{\Psi}|^\eps \leq |\tilde{\Psi}|^\frac13 \les |n_2|,
	\end{align*}
	for any $\eps>0$. Applying the lemma, the number of divisors $d_4 = n-n_2-n_3-n_4 $, $d_5 = n-n_2-n_3-n_5$ satisfying 
	\begin{equation*}
	\begin{cases}
	|d_j - n| = |n_2+n_3+n_j| \les N_2 ,& \text{if } |n| \gg |n_2|,\\
	|d_j - n_2|  = |n - n_3 - n_j| \les N_2, & \text{if } |n| \les |n_2|,
	\end{cases}
	\end{equation*}
	is bounded by $N_2^\eps$, for $j=4,5$. Thus, $|B(n,n_2,n_3, \mu)| \les N_2^\eps$, for any $\eps>0$. An analogous approach gives $|B(n,n_4,n_5, \mu)| \les N_2^\eps$.
	Consequently, we have
	\begin{align*}
	\sum_{\substack{n=n_1 + \ldots + n_5,\\ \Psi(n,\nbar_{1\ldots 5})= \mu}} \prod_{j=1}^5 f_j (n_j) 
	& \les N_2^\eps \sup_{|n-n_1| \les N_2} f_1(n_1) \prod_{j=2}^5 \|f_j\|_{\l^2_n},
	\end{align*}
	by choosing $\al = (\|f_2\|_{\l^2_n} \|f_3\|_{\l^2_n})^{-1}\|f_4\|_{\l^2_n} \|f_5\|_{\l^2_n}$.
	Looking at \eqref{aux_quintic}, since $|n-n_1| \les N_2$, taking a supremum in $n_1$ gives
	\begin{align*}
	\|\mathcal{Q}(u_1, \ldots, u_5)\|_{Z_1} 
	& \les \sum_{N_2,\ldots, N_5} N_2^{\frac{4}{q_1} + \eps} \intt_{\s_1, \ldots, \s_5}\bigg\| \sup_{|n-n_1|\les N_2} f_1(\s_1,n_1) \bigg\|_{l^p_n} \prod_{j=2}^5 \|f_j(\s_j)\|_{ \l^2_n} \\
	& \les \sum_{N_2,\ldots, N_5} N_2^{\frac{4}{q_1} + \eps}  \intt_{\s_1, \ldots, \s_5} \bigg(\sum_{n_1} |f_1(\s_1,n_1)|^p \sum_{|n-n_1|\les N_2} 1 \bigg)^{\frac1p} \prod_{j=2}^5 \|f_j(\s_j)\|_{ \l^2_n} \\
	& \les \sum_{N_2, \ldots, N_5} N_2^{\frac{4}{q_1} + \frac1p + \eps} \|f_1\|_{L^1_\s \l^p_n} \prod_{j=2}^5 \|f_j\|_{L^1_\s \l^2_n}.
	\end{align*}
	Using H\"older's and Minkowki's inequalities, we have
	\begin{align*}
	\|\mathcal{Q}(u_1, \ldots, u_5)\|_{Z_1} 
	& \les \sum_{N_2,\ldots, N_5} N_2^{\frac{4}{q_1} + \frac1p+ \eps} (N_2 N_3 N_4 N_5)^{ \dl - \frac1p +} \| u_1 \|_{Z_0} \prod_{j=2}^5 \|u_j \|_{Y_0} .
	\end{align*}
	It only remains to sum in the dyadic numbers $N_j$. Using the fact that $N_3 \geq N_2^{4\sqrt{\dl}}$, we have
	\begin{align*}
	\sum_{N_2, \ldots, N_5} N_2^{\frac{4}{q_1} +  \dl + } ( N_3 N_4 N_5)^{\dl - \frac1p +} 
	& \les \sum_{N_2, \ldots, N_5} (N_2 N_3 N_4 N_5)^{-\dl} N_3^{2\dl+ 5\sqrt{\dl} - \frac1p +} (N_4 N_5 )^{2\dl - \frac1p+} \les 1
	\end{align*}
	for $\dl < \frac{1}{2p}$ and $2\dl + 5\sqrt{\dl} - \frac1p<0 \implies 0< \sqrt{\dl} < -\frac54 + \sqrt{\big(\frac{5}{4}\big)^2 + \frac{1}{2p}}$.
	
	\vspace{2mm}
	\noi\underline{\textbf{Subcase 1.2: }$N_3 \leq N_2^{4\sqrt{\dl}}$}
	
	\noi Using Cauchy-Schwarz inequality, we have
	\begin{align*}
	& \sum_{\substack{n=n_1 + \ldots + n_5,\\ \Psi(n,\nbar_{1\ldots 5})= \mu}} \prod_{j=1}^5 f_j (n_j) \\
	& = \sum_{n_3, n_4, n_5} \sum_{n_1 \in C(n, n_3,n_4,n_5, \mu)} f_1(n_1) f_2(n-n_1 - n_3 - n_4 - n_5) f_3 (n_3) f_4(n_4) f_5(n_5)\\
	& \les N_2^\eps\sum_{n_3, n_4, n_5} f_3 (n_3) f_4(n_4) f_5(n_5) \big(\sup_{n_1} f_1(n_1) f_2(n-n_1 - n_3 - n_4 - n_5) \big)
	\end{align*}
	where 
	\begin{multline*}
	C(n, n_3,n_4,n_5, \mu) : = \{n_1 \in \Z: l:= n_3 + n_4 + n_5, |l| \les |n-n_1-l| \les |n_1|, \\ |n-n_1 -l|\les N_2, \
	3(n-l)(n_1 + l)(n-n_1) = \mu - l^3 + n_3^3 + n_4^3 + n_5^3\},
	\end{multline*}
	for which $|C(n,n_3,n_4,n_5, \mu)| \les N_2^\eps$ for any $\eps>0$ from Lemma~\ref{lm:divisor}. Note that if $|n| \gg |n-n_1 - l|$, then $\tilde{\Psi}:= 3(n-l)(n_1 + l)(n-n_1)$ satisfies $|\tilde{\Psi}| \les |n|^3$ and $|\tilde{\Psi}|^\eps \leq |\tilde{\Psi}|^\frac13 \les |n|$ for any $0<\eps <\frac13$. Counting the number of $n_1$ is equivalent to counting the number of divisors $d = n_1 + l$. Since $|d - n| = |n-n_1-l| \les N_2$, from Lemma~\ref{lm:divisor}, there exist at most $N_2^\eps$ values for $n_1$. If $|n| \les |n-n_1-l|$, then $|\tilde{\Psi}| \les |n-n_1-l|^3\les N_2^3$, so by the standard divisor counting lemma, there are at most $N_2^\eps$ divisors $n_1+l$. Consequently, $|C(n,n_3,n_4,n_5, \mu)| \les N_2^\eps$, for any $0<\eps<\frac13$.
	
	Minkowski's inequality gives the following
	\begin{align*}
	\bigg\|  \sum_{\substack{n=n_1 + \ldots + n_5,\\ \Psi(n,\nbar_{1\ldots 5})= \mu }} \prod_{j=1}^5 f_j (n_j) \bigg\|_{\l^p_n \l^\infty_\mu} 
	& \les N_2^\eps \|f_1\|_{\l^p_n} \|f_2\|_{\l^p_n} \prod_{j=3}^5 \|f_j\|_{\l^1_n}.
	\end{align*}
	
	Applying the previous estimate to \eqref{aux_quintic} and H\"older's inequality give
	\begin{align*}
	\|\mathcal{Q}(u_1, \ldots, u_5)\|_{Z_1} & \les \sum_{N_2, \ldots, N_5} N_2^{\frac{4}{q_1} + \eps} \|f_1\|_{L^1_\s \l^p_n} \|f_2\|_{L^1_\s \l^p_n} \prod_{j=3}^5 \|f_j\|_{\l^1_n L^1_\ld} \\
	& \les \sum_{N_2, \ldots, N_5} N_2^{\frac{4}{q_1} + \frac{1}{r_0} - \frac1p - \frac12+} (N_3 N_4 N_5)^{\frac12 - \frac1p +} \|u_1\|_{Z_0} \prod_{j=2}^5 \|u_j\|_{Y_0}.
	\end{align*}
	It only remains to sum in the dyadics $N_j$:
	\begin{align*}
	\sum_{N_2, \ldots, N_5} N_2^{\frac{4}{q_1} + \dl - \frac1p+ } (N_3 N_4 N_5)^{\frac12 - \frac1p+} 
	& \les \sum_{N_2, \ldots, N_5} (N_3 N_4 N_5)^{-\dl } N_2^{\frac{4}{q_1} + \dl - \frac1p + 12\sqrt{\dl}(\frac12 - \frac1p + 3 \sqrt{\dl}) } \les 1
	\end{align*}
	if $\frac{4}{q_1} + \dl - \frac1p + 12\sqrt{\dl}(\frac12 - \frac1p + 3 \sqrt{\dl})<0 \implies 0< \sqrt{\dl} < - \frac{6}{21}\big(\frac12 - \frac1p\big) + \sqrt{\frac{6^2}{21^2}\big(\frac12 - \frac1p\big)^2 + \frac{21}{p}}$.
	
	\vspace{5mm}
	\noi\textbf{\underline{Case 2: one pairing $(4,5)$}}
	
	\noi In this case, we have $n=n_1 + n_2 + n_3$, $n_4 + n_5 =0$. Let $f_1(\s,n) = \jb{n}^\frac12 |\ft{u}_1(\s+n^3,n)|$ and $f_j(\s,n) = |\ft{\P_{N_j} u_j} (\s+n^3, n)|$, $j=2,3$.
	Using Cauchy-Schwarz inequality in $n_4$  and proceeding as in Case 1 gives the following
	\begin{align}
	& \|\mathcal{Q}(u_1, \ldots, u_5)\|_{Z_1}\nonumber\\
	& \les \sum_{N_2,N_3} \intt_{\s_1,\s_2,\s_3} \Bigg\| \sum_{\substack{n=n_1+n_2+n_3\\ \Phi(\nbar_{123}) = \mu}} \prod_{j=1}^3 f_j(\s_j,n_j)  \Bigg\|_{\l^p_n \l^{q_1}_\mu} \prod_{k=4}^5\| \ft{u}_k \|_{L^1_\tau \l^2_n} \nonumber\\
	&  \les \sum_{N_2,N_3} \intt_{\s_1, \s_2, \s_3}  \Bigg\| \1_{\mu \in A(n,N_2,N_3)} \sum_{\substack{n=n_1+n_2+n_3,\\ \Phi(\nbar_{123}) = \mu}}  \prod_{j=1}^3 f_j(\s_j, n_j) \Bigg\|_{\l^p_n }
	 \prod_{k=4}^5\| \ft{u}_k \|_{L^1_\tau \l^2_n}  \nonumber\\
	 & \les  \sum_{N_2,N_3} N_2^{\frac{2}{q_1}} \intt_{\s_1,\s_2,\s_3}  \Bigg\| \sum_{\substack{n=n_1+n_2+n_3,\\ \Phi(\nbar_{123}) = \mu}} \prod_{j=1}^3 f_j(\s_j, n_j) \Bigg\|_{\l^p_n \l^\infty_\mu}
	 \prod_{k=4}^5\| \ft{u}_k \|_{L^1_\tau \l^2_n}, \label{aux2_quintic}
	\end{align}
	where 
	\begin{multline*}
	A(n,N_2,N_3) := \big\{ \mu \in \Z: \ \mu = n^3 - (n-n_2-n_3)^3 - n_2^3 - n_3^3, \ |n_j| \sim N_j, j=2,3 \big\},
	\end{multline*}
	which satisfies $|A(n,N_2,N_3)| \les N_2^2$ uniformly in $n$.
	
	As before, we consider two subcases.
	
	\vspace{2mm}
	\noi\underline{\textbf{Subcase 2.1: }$N_2^{4\sqrt{\dl}} \leq N_3$}\\
	\noi Focusing on the inner sum, we apply Cauchy's inequality, with $\al>0$, to obtain the following
	\begin{multline*}
	 \sum_{\substack{n=n_1+n_2+n_3,\\ \Phi(\nbar_{123}) = \mu}}  f_1( n_1) f_2( n_2) f_3(n_3)\\
	\les \sum_{n_2} \sum_{n_1 \in B(n,n_2,\mu)} \al f_1(n_1) |f_2(n_2)|^2 +  \sum_{n_3} \sum_{n_1 \in B(n,n_3,\mu)} \al^{-1} f_1(n_1) |f_3( n_3)|^2 ,
	\end{multline*}
	where 
	\begin{equation*}
	B(n,n_j, \mu) = \big\{ n_1\in \Z : \ 3(n-n_1)(n-n_j)(n_1+n_j) = \mu \big\}, \ j=2,3.
	\end{equation*}
	Note that $|B(n,n_j,\mu)| \leq 2$ because the given equation is quadratic in $n_1$. Thus, taking a supremum in $n_1$ and using the fact that $|n-n_1|\les N_2$, we get
	\begin{align*}
	 \sum_{\substack{n=n_1+n_2+n_3,\\ \Phi(\nbar_{123}) = \mu}}  \prod_{j=1}^3 f_j(\s_j, n_j) 
	& \les  \sup_{|n-n_1|\les N_2} f_1(\s_1,n_1) \big(\al \| f_2(\s_2)\|_{\l^2_n}^2 + \al^{-1} \|f_3(\s_3)\|_{\l^2_n}^2 \big) \\
	& \les \sup_{|n-n_1|\les N_2} f_1(\s_1,n_1) \prod_{j=2}^3 \|f_j(\s_j)\|_{\l^2_n},
	\end{align*}
	by choosing $\al = \|f_2(\s_2)\|_{\l^2_n}^{-1} \|f_3(\s_3) \|_{\l^2_n}$.
	Using this estimate on $\mathcal{Q}$ gives
	\begin{align*}
	\|\mathcal{Q}(u_1, \ldots, u_5)\|_{Z_1}
	& \les \sum_{N_2, N_3} N_2^{\frac{2}{q_1}+\frac1p} \|f_1\|_{L^1_{\s}\l^p_n} \bigg(\prod_{j=2}^3 \|f_j\|_{L^1_\s\l^2_n} \bigg) \bigg(\prod_{k=4}^5 \|\ft{u}_k\|_{L^1_\tau \l^2_n}\bigg) \\
	& \les \sum_{N_2,N_3} N_2^{\frac{2}{q_1} + \frac1p} (N_2 N_3)^{\frac{1}{r_0} - \frac1p - \frac12 +} \|u_1\|_{Z_0} \prod_{j=2}^5 \|u_j\|_{Y_0}.
	\end{align*}
	The estimate follows from summing in the dyadics.

	\vspace{2mm}
	\noi\underline{\textbf{Subcase 2.2: }$N_2^{4\sqrt{\dl}} \geq N_3$}\\
	\noi Focusing on the spatial norm on \eqref{aux2_quintic}, we have
	\begin{align*}
	&\Bigg\| \sum_{\substack{n=n_1+n_2+n_3,\\ \Phi(\nbar_{123}) = \mu}}  \prod_{j=1}^3 f_j(\s_j,n_j) \Bigg\|_{\l^p_n \l^{\infty}_\mu} \\
	& \les \bigg\| \sum_{n_3} \bigg( \sum_{n_1 \in C(n,n_3,\mu)} f_1(\s_1,n_1) f_2(\s_2, n-n_1-n_3)  \bigg) f_3(\s_3,n_3) \bigg\|_{\l^p_n \l^\infty_\mu}\\
	& \les \sum_{n_3} f_3(\s_3,n_3) \big\| \sup_{n_1} f_1(\s_1,n_1) f_2(\s_2,n-n_1-n_3) \big\|_{\l^p_n},
	\end{align*}
	where the set $C$ is defined as follows
	$$C(n,n_3,\mu) = \big\{ n_1\in\Z : 3(n-n_1) (n-n+3) (n_1+n_3) = \mu \big\}$$
	and satisfies $|C(n,n_3,\mu)| \leq 2$.
	Substituting this estimate in \eqref{aux2_quintic} and using H\"older's inequality gives
	\begin{align*}
	\|\mathcal{Q}(u_1, \ldots, u_5)\|_{Z_1} 
	& \les \sum_{N_2,N_3} N_2^{\frac{2}{q_1} + \frac{1}{r_0} - \frac1p - \frac12 + } N_3^{\frac12 - \frac1p+} \|u_1\|_{Z_0} \prod_{j=2}^5 \|u_j\|_{Y_0}.
	\end{align*}
	The estimate follows from summing in the dyadics.

	\vspace{2mm}
	\noi\textbf{\underline{Case 3: two pairings $(2,3), (4,5)$}}\\
	\noi Using Minkowski's and Cauchy-Schwarz inequalities, we get the following
	\begin{align*}
	&\|\mathcal{Q}(u_1, \ldots, u_5)\|_{Z_1} \\
	& \les \bigg\| \|\jb{n}^\frac12 \ft{u}_1(n)\|_{L^1_\tau}  \sum_{n_2} \|\ft{u}_2(n_2)\|_{L^1_\tau} \|\ft{u}_3( -n_2)\|_{L^1_\tau}  \sum_{n_4} \|\ft{u}_4(n_4)\|_{L^1_\tau} \|\ft{u}_5 (- n_4)\|_{L^1_\tau} \bigg\|_{\l^p_n }\\
	& \les \| \jb{n}^\frac12 \ft{u}_1 \|_{\l^p_n L^1_\tau} \prod_{j=2}^5 \| \ft{u}_j \|_{\l^2_n L^1_\tau}.
	\end{align*}
	The result follows from H\"older's inequality.

\end{proof}

\begin{remark}\rm
	Note that the above estimate still holds if we include a factor of $\jb{n_j}^\eps$ in the multiplier, for some $j\in\{2,\ldots, 5\}$ and a small $0<\eps \ll 1$.
\end{remark}

\begin{proposition}\label{prop:septic}
	Let the first factor in $\mathcal{Q}$ have the highest frequency which is not associated to a pairing and $\#\in\{A,B\}$. Then, the following estimates hold
	\begin{align}
	\big\| \mathcal{Q} \big( \varphi_T \cdot \G_{\#}[u_1,u_2,u_3], u_4,\ldots, u_7\big) \big\|_{Z_1} & \les \|u_1 \|_{Z_0} \prod_{j=2}^7 \|u_j\|_{Y_0}, \label{septic}\\
	\big\|  \mathcal{Q} \big( \conj{\varphi_T \cdot \G_{\#}[u_1,u_2,u_3]}, u_4,\ldots, u_7\big)\big\|_{Z_1} & \les \|u_1 \|_{Z_0} \prod_{j=2}^7 \|u_j\|_{Y_0}.\label{septic2}
	\end{align}
\end{proposition}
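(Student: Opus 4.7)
The plan is to combine the kernel bound for $\G_{\#}$ with the quintic structure of $\mathcal{Q}$, exploit the multiplier gain afforded by the region $\X_{\#}(m)$ with $\# \in \{A,B\}$, and thereby reduce the septic estimate to a seven-index extension of the argument in Proposition~\ref{standard_quintic}. First I would observe that \eqref{septic2} follows from \eqref{septic} by the elementary identity $\|\conj v\|_{X^{s,b}_{p,q}}=\|v\|_{X^{s,b}_{p,q}}$ applied to $v=\varphi_T\cdot \G_{\#}(u_1,u_2,u_3)$, so I would focus on \eqref{septic}.

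The first technical step is to combine the two levels of convolution. Writing $m=n_1+n_2+n_3$ and $n=m+n_4+\cdots+n_7$, the kernel bound \eqref{estimateKY_new} for $\G_{\#}$ together with the pointwise form \eqref{quintic} of $\mathcal{Q}$ yields, after merging the two $\jb{\cdot}^{1-\theta}$ time kernels via Lemma~\ref{lm:convolution} (at the cost of replacing the exponent by $1-2\theta-$),
\begin{align*}
&\big|\Ft_{t,x}\big(\mathcal{Q}[\varphi_T\cdot\G_{\#}(u_1,u_2,u_3),u_4,\dots,u_7]\big)(\tau,n)\big| \\
&\qquad \les \sum_{\substack{n=n_1+\cdots+n_7\\ \nbar_{123}\in\X_{\#}(m)}}\intt_{\R^7} \frac{|n_1|}{\jb{\tau-n^3}\jb{\tau-\bar\tau}^{1-2\theta-}\jb{\Phi(\nbar_{123})}} \prod_{j=1}^7 |\ft u_j(\tau_j,n_j)|,
\end{align*}
with $\bar\tau=\tau_1+\cdots+\tau_7$.

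Next I would exploit the spatial gain. By property~(5) of the lemma characterizing $\X_*(n)$ in Section~\ref{sec:preliminaries}, on $\X_{\#}(m)$ with $\#\in\{A,B\}$ one has $\jb{\Phi(\nbar_{123})}\ges\max(\jb{n_1},\jb{n_2})^2$, and so
\[
\frac{|n_1|}{\jb{\Phi(\nbar_{123})}}\les \frac{1}{\max(\jb{n_1},\jb{n_2})}.
\]
The hypothesis that the first factor of $\mathcal{Q}$ carries the highest spatial frequency unaffected by a pairing forces $|n|\les|m|\les\max(\jb{n_1},\jb{n_2})$, so the full spatial multiplier including the $\jb{n}^{1/2}$ weight from the $Z_1$-norm is dominated by $\max(\jb{n_1},\jb{n_2})^{-1/2}\les \jb{n_1}^{-1/4}\jb{n_2}^{-1/4}$. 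This extra negative power, combined with the $\jb{n_j}^{-1/2}$ decay gained after pulling out the $\|u_1\|_{Z_0}$ and $\|u_j\|_{Y_0}$ ($j=2,3$) weights, suffices via a dyadic decomposition to collapse the inner $(n_1,n_2,n_3)$-sum into an effective single factor located at frequency $m$.

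With the inner block thus controlled, the remaining task is a seven-index analogue of Proposition~\ref{standard_quintic}, with $(m,n_4,\dots,n_7)$ playing the role of the five quintic indices and with the same three-case dichotomy (zero, one or two pairings). In each case the argument proceeds by dyadic decomposition, Cauchy--Schwarz and the divisor bound of Lemma~\ref{lm:divisor} applied to the appropriate resonance relation, exactly as in the quintic proof; the slight reduction from $1-\theta$ to $1-2\theta-$ in the time exponent is harmless provided $\theta>0$ is chosen sufficiently small. The main obstacle will be the two-layered bookkeeping induced by the inner resonance $\Phi(\nbar_{123})$ and the outer pairings, but the $\Phi$-gain above precisely neutralizes both the derivative loss $|n_1|$ and the $\jb{n}^{1/2}$ weight, so the outer analysis is a direct extension of the quintic case.
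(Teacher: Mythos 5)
Your reduction of \eqref{septic2} to \eqref{septic} and your multiplier bound are exactly the paper's starting point: on $\X_{\#}(m)$ one has $\jb{\Phi(\nbar_{123})}\ges \max(\jb{n_1},\jb{n_2})^2$, and since the $\G_{\#}$-block carries the largest unpaired frequency, $\jb{n}^{\frac12}|n_1|\jb{\Phi(\nbar_{123})}^{-1}\les \max(\jb{n_1},\jb{n_2})^{-\frac12}$. The gap is in the step where you ``collapse the inner $(n_1,n_2,n_3)$-sum into an effective single factor located at frequency $m$'' and then run ``a seven-index analogue of Proposition~\ref{standard_quintic} with $(m,n_4,\ldots,n_7)$''. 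First, the collapse cannot be done by dyadic decomposition and H\"older/Young alone: distributing the gain as $\jb{n_1}^{-\frac14}\jb{n_2}^{-\frac14}$ leaves you summing the two free inner frequencies with effective weights $\jb{n_2}^{-\frac34}$ and $\jb{n_3}^{-\frac12}$ against $\l^{p'}$-norms, and $\jb{n_3}^{-\frac12}\notin \l^{p'}$ for any $p\geq 2$ (and $\jb{n_2}^{-\frac34}\notin\l^{p'}$ for $p\geq 4$); this is precisely the obstruction the whole construction is designed to bypass, and it is only defeated by a counting argument that sees all seven frequencies simultaneously. Second, once the inner sum has been carried out, the resonance function used in the quintic proof for Schur's test and for Lemma~\ref{lm:divisor} is no longer available: the relevant quantity is $\Psi(n,\nbar_{1\ldots7})=n^3-n_1^3-\cdots-n_7^3$, which involves $n_1^3+n_2^3+n_3^3$ and does not factor through $m$, so fixing $\Psi=\mu$ and counting must happen \emph{before} the inner frequencies are summed. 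The alternative reading of your plan -- treat $\varphi_T\cdot\G_{\#}[u_1,u_2,u_3]$ as an honest first factor and invoke Proposition~\ref{standard_quintic} -- fails because that proposition requires the largest unpaired factor in $Z_0$, whereas Lemma~\ref{lm:EY} only places $\G_{\#}$ in $Y_1$, and no $Z_0$-bound retaining the derivative gain is available (if it were, the septic terms would never have been introduced).

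The paper's proof keeps all seven frequencies and modulations explicit: after the change of variables $\s_j=\tau_j-n_j^3$ it applies Schur's test in the full resonance $\Psi(n,\nbar_{1\ldots7})$, restricts $\mu$ crudely to a set of cardinality $\les N_2\cdots N_7$ (affordable because of the tiny exponent $\frac{1}{q_1}$), then uses the Cauchy inequality with a parameter $\al$ to split the six lower factors into two triples placed in $\l^2_n$, and counts, via Lemma~\ref{lm:divisor}, the triples satisfying the simultaneous linear and cubic constraints (a bound $N_{2+}^\eps$), before summing the dyadic blocks with the $\max(\jb{n_1},\jb{n_2})^{-\frac12}$ gain; for $\#=B$ the roles of $u_1$ and $u_2$ are swapped, and the pairing cases are treated separately with a stronger estimate in the spirit of Proposition~\ref{standard_quintic}. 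Your proposal matches the paper up to the multiplier bound, but the mechanism that actually closes the estimate for $p\geq 4$ -- the seven-variable Cauchy--Schwarz/divisor-counting step -- is missing and cannot be recovered after the collapse you propose.
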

\begin{proof}
	Note that the left-hand side of \eqref{septic} is controlled by the following quantity
	\begin{equation}
	\bigg\|  \sum_{n=n_1 + \ldots  + n_7}  \intt_{\R} \frac{\jb{n}^\frac12 |n_1|}{\jb{\Phi(\nbar_{123})}\jb{\tau  - \ld}^{1-\theta} } 
	\intt_{\ld = \tau_1 + \ldots + \tau_7} \prod_{j=1}^7 |\ft{u}_j(\tau_j, n_j)| \  d\ld \bigg\|_{\l^p_n L^{q_0}_\tau} \label{aux_septic}.
	\end{equation}
	Similarly, the left-hand side of \eqref{septic2} is controlled by \eqref{aux_septic} with $\ft{u}_j$ substituted by $\ft{\conj{u}}_j$, $j=1,2,3$. Thus, we will only focus on estimating \eqref{aux_septic}. 
	
	Since $(n_1,n_2,n_3)\in \X_*(n_0)$ and $|n_0| = \max(|n_0|, |n_4|, \ldots, |n_7|)$, we have 
	$$\frac{\jb{n}^\frac12 |n_1|}{\jb{\Phi(\nbar_{123})}} \les \frac{1}{\max\limits_{j=1,\ldots,7}\jb{n_j}^\frac12}.$$
	Consider the change of variables $\s_j = \tau_j - n_j^3$, $j=1,\ldots, 7$ and let $f_j(\s, n) = | \ft{\P_{N_j} u}_j(\s + n^3, n)|$, $j=1, \ldots, 7$.

	\vspace{2mm}
	\noi\underline{\textbf{Case 1:} no pairings } \\
	\noi Assume that $*=A$. Then, $|n_1| \geq |n_j|$, $j=2,\ldots,7$. Using Minkowski's inequality and Schur's test, we have
	\begin{align*}
	\eqref{aux_septic}  &\les  \sum_{N_1, \ldots, N_7} N_1^{-\frac12} \intt_{\R^7} \bigg\| \sum_{n=n_1 + \ldots + n_7} \frac{1}{\jb{\tau - \tau_1 - \ldots - \tau_7 + \Psi(n, \nbar_{1\ldots 7})}^{1-\theta}} \prod_{j=1}^7 f_j(\s_j, n_j) \bigg\|_{\l^p_n L^{q_0}_\tau}\\
	 & \les  \sum_{N_1, \ldots, N_7} \intt_{\R^7} N_1^{-\frac12} \Bigg\| \sum_{\substack{n=n_1+ \ldots+ n_7,\\ \Psi(n,\nbar_{1\ldots7} ) = \mu }} \prod_{j=1}^7 f_j(\s_j, n_j) \Bigg\|_{\l^p_n \l^{q_1}_{\mu}},
	\end{align*}
	where $\Psi(n,\nbar_{1\ldots 7}) = n^3 -n_1^3 - \ldots - n_7^3$. Using H\"older's inequality, it follows that
	\begin{align*}
	\eqref{aux_septic}  &\les   \sum_{N_1, \ldots, N_7} \intt_{\R^7} N_1^{-\frac12} \Bigg\| \1_{\mu \in A(n,N_1, \ldots, N_7)}\sum_{\substack{n=n_1+ \ldots+ n_7,\\ \Psi(n,\nbar_{1\ldots7} ) = \mu  }} \prod_{j=1}^7 f_j(\s_j, n_j) \Bigg\|_{\l^p_n \l^{q_1}_{\mu}} \\
	& \les \sum_{N_1, \ldots, N_7} \intt_{\R^7} N_1^{-\frac12} (N_2 \cdots N_7)^{\frac{1}{q_1}} \bigg\| \sum_{\substack{n=n_1+ \ldots+ n_7,\\ \Psi(n,\nbar_{1\ldots7} ) = \mu  }} \prod_{j=1}^7 f_j(\s_j, n_j) \Bigg\|_{\l^p_n \l^{\infty}_{\mu}},
	\end{align*}
	where
	\begin{multline*}
	A(n, N_1, \ldots,N_7) = \big\{ \mu\in\Z : \ \mu = n^3 - (n-n_2-\ldots-n_7)^3 - n_2^3 - \ldots - n_7^3, \\  |n_j|\sim N_j, j=2, \ldots, 7 \big\}
	\end{multline*}
	which satisfies $|A(n,N_2, \ldots, N_7)| \les N_2 \cdots N_7$, uniformly in $n$.
	Focusing on the inner sum and omitting the time dependence, we have for $\al>0$ 
	\begin{align*}
	\sum_{\substack{n=n_1+ \ldots+ n_7,\\ \Psi(n,\nbar_{1\ldots7} ) = \mu }} \prod_{j=1}^7 f_j(n_j)  & \les  \sum_{(n_2, n_3, n_4)} \sum_{n_1} f_1(n_1)\sum_{\substack{(n_5,n_6,n_7) \\\in B(n,n_1,n_2,n_3, n_4, \mu)}} \al|f_2(n_2) f_3(n_3) f_4(n_4)|^2 \\
	& + \sum_{(n_5,n_6,n_7)} \sum_{n_1} f_1(n_1) \sum_{\substack{(n_2, n_3, n_4)\\\in B(n,n_1,n_5,n_6,n_7, \mu)}} \al^{-1}|f_5(n_5) f_6(n_6) f_7(n_7)|^2  ,
	\end{align*}
	where 
	\begin{multline*}
	B(n,n_1,n_2,n_3,n_4, \mu)= \big\{ (n_5,n_6,n_7)\in\Z^3: \ n_5 + n_6 + n_7 = n-n_1 - n_2 - n_3 - n_4, \\
	n_5^3 + n_6^3 + n_7^3 = n^3 - n_1^3 - n_2^3 - n_3^3 - n_4^3 - \mu \big\}.
	\end{multline*}
	Using Lemma~\ref{lm:divisor} (2) we have that $|B(n,n_1, \ldots, n_4, \mu)|, |B(n, n_1, n_5, n_6, n_7, \mu)| \les N_{2+}^\eps$, for any $\eps>0$ and $N_{2+} = \max(N_2, \ldots, N_7)$. In addition, we know that $|n-n_1| \les N_{2+}$. Thus, it follows that
	\begin{align*}
	\sum_{\substack{n=n_1+ \ldots+ n_7,\\ \Psi(n,\nbar_{1\ldots7} ) = \mu }} \prod_{j=1}^7 f_j(n_j)  & \les N_{2+}^\eps \bigg( \sum_{|n-n_1| \les N_{2+}} f_1(n_1) \bigg) \prod_{j=2}^7 \|f_j\|_{\l^2_n}, 
	\end{align*}
	by choosing $\al = (\|f_2\|_{\l^2_n} \|f_3\|_{\l^2_n}\|f_4\|_{\l^2_n})^{-1} \|f_5\|_{\l^2_n} \|f_6\|_{\l^2_n}\|f_7\|_{\l^2_n} $.
	Consequently, using H\"older's and Minkowski's inequality gives the following
	\begin{align*}
	\eqref{aux_septic}
	& \les \sum_{N_1, \ldots, N_7} N_1^{\frac12 -\frac1p + } N_{2+}^{\frac1p + \eps} (N_2 \cdots N_7)^{\frac{1}{q_1}} \|f_1\|_{L^1_\tau \l^p_n} \prod_{j=1}^7 \|f_j\|_{L^1_\s \l^2_n}\\
	& \les \sum_{N_1, \ldots, N_7}  N_1^{ - \frac1p +}  N_{2+}^{ \frac1p + \eps} (N_2 \cdots N_7)^{\frac{1}{q_1} + \frac{1}{r_0} - \frac1p - \frac12 +}  \| u_1 \|_{Z_0} \prod_{j=2}^7 \|u_j\|_{Y_0}.
	\end{align*}
	The estimate follows from summing in the dyadics. 
	If $*=B$, then $|n_2|$ is the largest frequency and we can proceed as for $*=A$ by swapping the roles of $u_1$ and $u_2$.
	
	\vspace{2mm}
	\noi\underline{\textbf{Case 2:} at least one pairing} \\
	If there is at least one pairing, we can show a stronger estimate, where every function belongs in $Y_0$, due to the negative power of the largest frequency which is not in a pairing. This case will follow a similar strategy to the proof of Proposition~\ref{standard_quintic}, thus we will omit the details.

\end{proof}

\subsection{Remaining quintic terms}\label{sec:rem_quintic}
It remains to estimate the terms in \eqref{quintic_bad}. These terms cannot be written as \eqref{quintic} and thus require a finer analysis.

For the $\B^j_*$ terms, we need to use the modulations to help establishing the estimate. For example, calculating the space-time Fourier transform of the $\B_*^3$ terms in \eqref{quintic_bad}, $*\in\{A,B\}$, we have
\begin{align*}
&\big| \Ft_{t,x} \B^{3}_{*}\big(u_1, u_2, \varphi_T \cdot \G_{\#} [u_3, u_4,u_5]\big) (\tau, n) \big| \\
& \les  \sum_{\substack{\nbar_{120}\in\X_*(n), \\\nbar_{345} \in \X_{\#}(n_0)}} \intt_{\R^3} \intt_{\substack{\ld = \tau_1 + \tau_2 + \tau_0,\\\s = \tau_3 + \tau_4 + \tau_5}} |n_1n_3| \big| K_{+} (\tau-n^3, \ld - n^3, \Phi(\nbar_{120})) \big|\\
& \phantom{XX} \times \frac{ \1_{|\tau_0 - n_0^3| \ges |\ld - n^3 + \Phi(\nbar_{120})|}}{\jb{\tau_0 - \mu}  \jb{\mu - \s}  } \min\bigg(\frac{1}{\jb{\Phi(\nbar_{345})}}, \frac{1}{\jb{\tau_0 - n_0^3}}\bigg)  \prod_{j=1}^5 |\ft{u}_j(\tau_j, n_j)| \ d \s \ d \mu \ d \ld,
\end{align*}
using \eqref{estimateKY_new}. In order to control the multiplier, we must consider two cases depending on the modulations of the second generation: 
\begin{align}
|\tau_0 - n_0^3| &\gg |\s - n_0^3| \label{good_modulation},\\
|\tau_0 - n_0^3| &\les |\s - n_0^3| \label{bad_modulation}.
\end{align}
If \eqref{good_modulation} holds, then $|\tau_0 - \s| \sim |\tau_0 - n_0^3| \ges |\ld - n^3 + \Phi(\nbar_{105})|$ and we can obtain powers of the resonance relation of the first and the second generations:
\begin{multline*}
\1_{|\tau_0 - n_0^3| \gg |\s - n_0^3|} \big| \Ft_{t,x} \B^{3}_{*}\big(u_1, u_2, \varphi_T \cdot \G_{\#} [ u_3,u_4, u_5] \big) (\tau, n) \big|  \\
 \les  \sum_{\substack{\nbar_{120}\in\X_*(n), \\\nbar_{345} \in \X_{\#}(n_0)}}  \intt_\R \frac{\max\limits_{j=1, \ldots, 5} \jb{n_j}^{9\theta} |n_1n_3|}{\jb{\Phi(\nbar_{120})} \jb{\Phi(\nbar_{345})} \jb{\tau-n^3}^{1+\theta}} \intt_{\s' = \tau_1 + \ldots + \tau_5 } \prod_{j=1}^5 |\ft{u}_j(\tau_j, n_j)| \, d\s'.
\end{multline*}
If \eqref{bad_modulation} holds, we can only gain a power of the resonance relation of the first generation
\begin{multline*}
\1_{|\tau_0 - n_0^3| \les |\s - n_0^3|} \big| \Ft_{t,x} \B^{3}_{*}\big(u_1, u_2,\varphi_T \cdot \G_{\#} [ u_3,u_4, u_5] \big) (\tau, n) \big|  \\
\les  \sum_{\substack{\nbar_{120}\in\X_*(n), \\\nbar_{345} \in \X_{\#}(n_0)}}  \intt_\R \frac{\max\limits_{j=1, \ldots, 5} \jb{n_j}^{9\theta}|n_1n_3|}{\jb{\Phi(\nbar_{120})} \jb{\tau-n^3}^{1+\theta} \jb{\tau-\s'}^{1-\theta}} \intt_{\s' = \tau_1 + \ldots + \tau_5 } \prod_{j=1}^5 |\ft{u}_j(\tau_j, n_j)| \, d\s'.
\end{multline*}
Analogous estimates hold for the $\B^2_A$ contributions in \eqref{quintic_bad}, where the sums are over $\nbar_{105}\in\X_A(n)$ and $\nbar_{234}\in\X_{\#}(n_0)$, $\#\in\{A,B\}$, and we consider the same case separation depending on $\tau_0 - n_0^3$.

For the $\B^j_*$ contributions restricted to the region where \eqref{bad_modulation} holds and the $\mathcal{DN}_C$ terms in \eqref{quintic_bad}, we start by looking at the spatial frequencies in more detail. In the frequency regions where the spatial multipliers can be bounded, the contributions are controlled by \eqref{quintic} and we can use Proposition~\ref{standard_quintic}. Otherwise, we can apply the following result.

\begin{proposition}\label{new_quintic_3}
	Assume that the frequencies are ordered as follows $|n_1| \geq \ldots \geq |n_5|$. If $|n_1| \sim |n_2| \gg |n_3|\ges |n|$, $\al \les \frac{|n_1| \max_j|n_j|^{3\theta}}{|n_3|}$ and $(1,2)$ not a pairing, then the following estimate holds
	\begin{multline}
	\bigg\| \jb{n}^\frac12 \jb{\tau-n^3}^{b_1} \sum_{n=n_1+\ldots +n_5} \intt_\R \frac{\al(n, n_1, \ldots, n_5)}{\jb{\tau-n^3}^{1-\theta} \jb{\tau - \s'}^{1-\theta}} \intt_{\s' = \tau_1 + \ldots + \tau_5} \prod_{j=1}^5 |\ft{u}_j (\tau_j, n_j)| \ d\s' \bigg\|_{\l^p_n L^{q_0}_\tau} \\ \les \bigg(\prod_{j=1}^3\|u_j\|_{Z_0}\bigg)\|u_4\|_{Y_0} \|u_5\|_{Y_0}. \label{eq_new_quintic_3}
	\end{multline}
\end{proposition}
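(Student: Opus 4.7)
The plan is to mimic the scheme of Proposition~\ref{standard_quintic}, but to exploit the additional structural information $|n_1|\sim|n_2|\gg|n_3|\gtrsim|n|$ and the bound on $\alpha$ in order to redistribute the $\jb{n}^{1/2}$ weight in a way that leaves the $Z_0$ and $Y_0$ norms on $u_1,\ldots,u_5$ more than enough slack.

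First I would absorb the spatial weight into the functions. Since $|n|\lesssim|n_3|$ and $|n_1|\sim|n_2|$, the hypothesis on $\alpha$ gives
\[
\jb{n}^{\frac12}\,\alpha(n,n_1,\ldots,n_5)\;\lesssim\;\frac{\jb{n_1}^{\frac12}\jb{n_2}^{\frac12}\max_j\jb{n_j}^{3\theta}}{\jb{n_3}^{\frac12}}.
\]
Dividing by the three $\jb{n_j}^{1/2}$ coming from the $Z_0$-norms on $u_1,u_2,u_3$, what remains is a multiplier bounded by $\jb{n_3}^{-1}\max_j\jb{n_j}^{3\theta}$, which provides ample room. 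Next I would perform the change of variables $\sigma_j=\tau_j-n_j^3$ used throughout Section~\ref{sec:w}, writing
\[
\tau-\sigma'=\tau-n^3-(\sigma_1+\cdots+\sigma_5)+\Psi(n,\overline{n}_{1\ldots 5}),\qquad \Psi(n,\overline{n}_{1\ldots 5})=n^3-\sum_{j=1}^5 n_j^3,
\]
so that the dependence on $(n_1,\ldots,n_5)$ in the kernel $\jb{\tau-\sigma'}^{-(1-\theta)}$ factors through $\Psi$.

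Now I would combine the norm weight $\jb{\tau-n^3}^{b_1}$ with $\jb{\tau-n^3}^{-(1-\theta)}$, producing a non-positive power $\jb{\tau-n^3}^{\theta-\delta}$ for $\theta$ chosen sufficiently small. After applying Minkowski in the $\sigma_j$-variables and Schur's test (exactly as in the proof of Lemma~\ref{lm:EY} and Proposition~\ref{standard_quintic}), the problem reduces to estimating
\[
\bigg\|\mathbf{1}_{\mu\in A(n,N_1,\ldots,N_5)}\sum_{\substack{n=n_1+\cdots+n_5\\ \Psi(n,\overline{n}_{1\ldots 5})=\mu}}\frac{\max_j \jb{n_j}^{3\theta}}{\jb{n_3}\jb{n_4}^{\frac12}\jb{n_5}^{\frac12}}\prod_{j=1}^5 f_j(\sigma_j,n_j)\bigg\|_{\ell^p_n\ell^{q_1}_\mu},
\]
where $f_j$ encode the $Z_0$/$Y_0$ weighted Fourier transforms and $|A(n,N_1,\ldots,N_5)|\lesssim N_3N_4N_5$.

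The main obstacle is the frequency sum with a prescribed value of $\Psi$. Here I would use the assumption that $(1,2)$ is not a pairing: together with $|n_1|\sim|n_2|\gg|n_3|$, this forces $|n_1+n_2|\gtrsim\max_j|n_j|^{\varepsilon}$ for suitable purposes, so that after fixing $n_3,n_4,n_5$ the identity $\Psi=\mu$ becomes a cubic in $n_1$ with at most $\mathcal{O}(N_1^{\varepsilon})$ integer solutions via Lemma~\ref{lm:divisor}. Summing in $n_3,n_4,n_5$ by H\"older, using the negative powers $\jb{n_3}^{-1+3\theta}\jb{n_4}^{-\frac12+3\theta}\jb{n_5}^{-\frac12+3\theta}$ to absorb the factor $N_3N_4N_5$ coming from $|A|$, and applying H\"older in the $\mu$-sum together with the small mismatch $q_1<q_0$, reduces the estimate to dyadic summability. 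The dyadic sums close as in Case~1 of Proposition~\ref{standard_quintic} provided $\theta,\delta$ are chosen sufficiently small in terms of $p$, yielding the claimed bound by $\prod_{j=1}^3\|u_j\|_{Z_0}\|u_4\|_{Y_0}\|u_5\|_{Y_0}$.
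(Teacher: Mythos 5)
Your reduction (absorbing $\jb{n}^{1/2}\al$ into the $Z_0$-weights of $u_1,u_2,u_3$ so that the surviving multiplier is $\les \jb{n_3}^{-1}\max_j\jb{n_j}^{3\theta}$, then changing variables to $\s_j=\tau_j-n_j^3$ and applying Schur's test so that the kernel depends on $\Psi(n,\nbar_{1\ldots5})$) matches the paper's setup, but the counting-plus-dyadic-summation step has a genuine gap. After all your reductions the multiplier still carries the \emph{positive} power $N_1^{3\theta}$ (and further small losses: the divisor/root counting in $n_1$ costs an $\eps$, and your H\"older in $\mu$ costs $|A|^{1/q_1}$ — note that $|A|\les N_3N_4N_5$ is not correct as stated, since $\mu=\Psi$ varies with $n_1$ as well, so the attainable set has size $\les N_1N_3N_4N_5$). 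The only compensating negative powers sit on $N_3,N_4,N_5$. In the regime $N_3,N_4,N_5=O(1)$ and $N_1\sim N_2\to\infty$ (which is allowed by the hypotheses, since only $|n_3|\ges|n|$ and $|n_1|\sim|n_2|\gg|n_3|$ are assumed) nothing decays in $N_1$, so the dyadic sum over $N_1$ diverges; "choosing $\theta,\dl$ small" cannot fix a sum of the form $\sum_{N_1}N_1^{3\theta+\eps}$. Relatedly, your claim that $(1,2)$ not being a pairing forces $|n_1+n_2|\ges\max_j|n_j|^{\eps}$ is false — it only gives $|n_1+n_2|\geq 1$.

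This is precisely why the paper splits into two cases. When $|n_3|\ges|n_1|^{1/2}$ your scheme works: the factor $N_3^{-1}\les N_1^{-1/2}$ absorbs $N_1^{3\theta+\eps}$ and the sum closes (this is the paper's Case 1, done with H\"older and divisor counting much as you describe). When $|n_3|\ll|n_1|^{1/2}$, the paper instead uses the no-pairing hypothesis in a quantitative way through the resonance function: writing $\Psi=3(n_1+n_2)(n_1+n_3+n_4+n_5)(n_2+n_3+n_4+n_5)+3(n_3+n_4)(n_3+n_5)(n_4+n_5)$, the first term is $\ges|n_1|^2$ (since $n_1+n_2\neq0$) while the second is $\les|n_3|^3\ll|n_1|^{3/2}$, so $|\Psi|\ges|n_1|^2$. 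One then trades small powers of $\jb{\tau-n^3}$, of the Schur kernel $\jb{\tau-n^3-\bar\s+\Psi}$, and of the modulations $\jb{\s_j}$ (affordable within the slack of the $Z_0$ and $Y_0$ weights) against $\jb{\Psi}^{-\dl/4}\les N_1^{-\dl/2}$, which supplies the missing negative power of $N_1$. Your proposal contains no substitute for this mechanism, so as written it does not prove the proposition in the regime $|n_3|\ll|n_1|^{1/2}$.
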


The $\B^j_*$ terms in \eqref{quintic_bad} localized to \eqref{good_modulation} can be estimated by the following proposition.
\begin{proposition} \label{prop:quintic_good}
	Let $\mathcal{Q}'(u_1,\ldots,u_5)$ be such that 
	\begin{multline*}
	 \big| \Ft_{t,x}\mathcal{Q}'(u_1, \ldots, u_5) (\tau,n)\big| \\\les \sum_{n=n_1+n_2+n_0} \sum_{n_0=n_3+n_4+n_5} \intt_{\R^5} \frac{\max_j \jb{n_j}^{9\theta}}{\jb{n_{\max} } \jb{n'_{\max}} \jb{\tau-n^3}^{1+\theta}} \prod_{j=1}^5 | \ft{u}_j(\tau_j, n_j) | \ d\tau_1 \cdots d\tau_5,
	\end{multline*}
	where $n_{\max} = \max(|n_1|, |n_2|) \geq |n_0|$ and $n'_{\max} = \max_{j=3,4,5} |n_j|$. Then, the following estimate holds
	\begin{align*}
	\big\| \mathcal{Q}' (u_1, \ldots, u_5) \|_{Z_1} & \les \prod_{j=1}^5 \|u_j\|_{Y_0}.
	\end{align*}
\end{proposition}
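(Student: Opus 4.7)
The plan is to exploit the strong temporal smoothing encoded in the factor $\jb{\tau-n^3}^{-(1+\theta)}$ of the multiplier: this is the key feature separating Proposition~\ref{prop:quintic_good} from Proposition~\ref{new_quintic_3} (which had a $\jb{\tau - \s'}$ factor instead). Because the multiplier's $\tau$-dependence is only through the outer modulation variable and not through any coupling of the $\tau_j$'s, the time integrations $d\tau_1 \cdots d\tau_5$ decouple. Consequently, after substituting the pointwise estimate on $\Ft_{t,x}\mathcal{Q}'$ into the definition of the $Z_1 = X^{\frac12,b_1}_{p,q_0}$ norm, I will obtain
\begin{equation*}
\|\mathcal{Q}'(u_1,\ldots,u_5)\|_{Z_1} \les \Big\| \jb{n}^{\frac12} \jb{\tau-n^3}^{-(\theta+\delta)} \sum \frac{\max_j \jb{n_j}^{9\theta}}{\jb{n_{\max}}\jb{n'_{\max}}} \prod_{j=1}^5 \|\ft u_j(\cdot,n_j)\|_{L^1_\tau} \Big\|_{\ell^p_n L^{q_0}_\tau},
\end{equation*}
using that $b_1 - 1 - \theta = -\theta - \delta$.

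Next, I would integrate in time: choosing $\theta$ small but with $\theta > 3\delta$ (so that $(\theta+\delta)q_0 > 1$), the quantity $\jb{\tau-n^3}^{-(\theta+\delta)}$ lies in $L^{q_0}_\tau$ with norm uniformly bounded in $n$. This removes the $L^{q_0}_\tau$ norm entirely. Each $L^1_\tau$ norm of $\ft u_j(\cdot, n_j)$ is then controlled by the $Y_0$-adapted quantity $h_j(n_j) := \|\jb{\tau-n_j^3}^{1/2} \ft u_j(\cdot, n_j)\|_{L^{r_0}_\tau}$ via H\"older in $\tau$, using that $\jb{\cdot}^{-1/2} \in L^{r_0'}_\tau$ since $r_0 < 2$. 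Writing $H_j(n_j) = \jb{n_j}^{1/2} h_j(n_j)$, we have $\|H_j\|_{\ell^p_{n_j}} = \|u_j\|_{Y_0}$, and the problem reduces to the purely spatial bound
\begin{equation*}
\Big\| \sum_{\substack{n = n_1+n_2+n_0\\ n_0 = n_3+n_4+n_5}} \frac{\max_j \jb{n_j}^{9\theta}}{\jb{n_{\max}}^{1/2}\jb{n'_{\max}}} \prod_{j=1}^5 \jb{n_j}^{-\frac12} H_j(n_j) \Big\|_{\ell^p_n} \les \prod_{j=1}^5 \|H_j\|_{\ell^p_{n_j}},
\end{equation*}
where I have used $\jb{n}^{1/2} \les \jb{n_{\max}}^{1/2}$ since $n_{\max} \ges |n|$.

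For the spatial estimate, I distribute the decay as $\jb{n_{\max}}^{-1/2} \les (\jb{n_1}\jb{n_2})^{-1/4}$ and $\jb{n'_{\max}}^{-1} \les (\jb{n_3}\jb{n_4}\jb{n_5})^{-1/3}$. The combined exponents on the $\jb{n_j}$'s sum to $4 - \mathcal{O}(\theta)$, placing us precisely at the threshold for $\ell^p$-continuity of the 5-fold convolution. I would handle this by dyadic decomposition $\jb{n_j} \sim N_j$ and then, following the strategy of Propositions~\ref{standard_quintic} and \ref{prop:septic}, rewrite the inner sum by indexing over the value of the resonance $\Psi(n,\nbar_{1\ldots 5}) = \mu$. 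Applying Cauchy-Schwarz in pairs of frequency variables together with Lemma~\ref{lm:divisor} to count the constrained tuples in each level set $\{\Psi = \mu\}$ yields an $N^\eps$ factor. This gain of $N^\eps$ over the critical threshold, weighted against the small positive power from the $9\theta$ factors in the numerator (together with the unused small $\theta$ slack), will allow the dyadic sums to converge provided $\delta, \theta$ are chosen small enough depending on $p$.

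The main obstacle is Step~3 above: the spatial estimate is exactly at the marginal Young threshold, so Young's inequality alone fails for $p \geq 4$. The resolution must leverage the fact that $\Psi$ forces an algebraic constraint on $(n_1,\ldots,n_5)$, which combined with the divisor-counting bound from Lemma~\ref{lm:divisor} provides the crucial subcritical gain. Care must also be taken to exploit the hypothesis $n_{\max} \ges |n_0|$ when distributing the spatial decay, since it is this inequality that permits treating the two generations of frequencies on equal footing.
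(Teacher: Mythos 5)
Your reduction steps are fine and coincide with the paper's: inserting the multiplier into the $Z_1$-norm, using $b_1-(1+\theta)=-(\theta+\dl)$ with $\theta>3\dl$ to integrate out $\tau$ in $L^{q_0}_\tau$, and converting each $\|\ft u_j(\cdot,n_j)\|_{L^1_\tau}$ into a $Y_0$-type quantity via H\"older with $\jb{\cdot}^{-1/2}\in L^{r_0'}_\tau$. The gap is in your final spatial step. First, your criticality assessment is wrong: the threshold for the $5$-fold convolution estimate in $\ell^p_n$ is total decay $4(1-\tfrac1p)=4-\tfrac4p$, not $4$, so with total decay $4-9\theta$ you are strictly subcritical by $\tfrac4p-9\theta>0$ for $\theta$ small depending on $p$; there is no marginal case to rescue. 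The apparent failure of Young for $p\geq 4$ is an artifact of your uniform splitting $\jb{n_{\max}}^{-1/2}\les(\jb{n_1}\jb{n_2})^{-1/4}$, $\jb{n'_{\max}}^{-1}\les(\jb{n_3}\jb{n_4}\jb{n_5})^{-1/3}$, which strands only $\jb{n_2}^{-3/4}$ of decay on a factor that must be summed in $\ell^1$. Second, the fix you propose cannot be run as stated: the level-set/divisor-counting mechanism of Propositions~\ref{standard_quintic} and \ref{prop:septic} works only because the kernel $\jb{\tau-n^3-\bar\s+\Psi}^{-1}$ couples the time variables to $\Psi$, so that Schur's test produces an $\ell^{q_1}_\mu$ norm in which the restriction $\Psi=\mu$ (bounded by Lemma~\ref{lm:divisor}) is summable. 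Here the multiplier's only time dependence is $\jb{\tau-n^3}^{-(1+\theta)}$; after you integrate out the $\tau_j$ there is no weight in $\mu$ left, summing the level sets $\{\Psi=\mu\}$ merely reconstitutes the full frequency sum, and a cardinality bound on a single level set gives nothing. Moreover your ``Cauchy--Schwarz in pairs'' step has no $\ell^2$ structure to latch onto once everything is measured in $\ell^p_n$ with $p$ large.

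The correct completion (and the paper's) is elementary: dyadically localize $\jb{n_j}\sim N_j$, keep the full powers $\jb{n_{\max}}^{-1}$, $\jb{n'_{\max}}^{-1}$ on the largest frequency of each generation, bound $\jb{n}^{1/2}\les N_{\max}^{1/2}$ (this is where $n_{\max}\ges|n_0|$ enters), and apply Young's inequality placing the single largest-frequency factor in $\ell^p_n$ and the other four in $\ell^1_n$, converting the latter to $\ell^p_n$ at cost $N_j^{1-\frac1p}$. In the case $|n_1|\geq|n_3|$ this yields dyadic factors $N_1^{9\theta-1}(N_2N_4N_5)^{\frac12-\frac1p+}N_3^{-\frac12-\frac1p+}$, whose sum converges precisely because of the $\tfrac4p$ margin, under a condition of the form $3\dl<\theta<\tfrac{4}{11p}$; the case $|n_1|<|n_3|$ is symmetric. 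No divisor counting is needed anywhere in this proposition.
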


It only remains to show the previously stated propositions.
\begin{proof}[Proof of Proposition~\ref{new_quintic_3}]
	
	Due to the $\theta$ loss in the largest frequency when estimating $\al$, we will distinguish two cases: when $|n_1|^{\frac12}\les |n_3|$ and when $|n_1|^{\frac12}\gg |n_3|$. 
	
	\vspace{2mm}
	\noi\underline{\textbf{Case 1:} $|n_1|^{\frac12}\les |n_3|$}
	
	\noi Using the notation $\Psi(n, \nbar_{1\ldots5}) = n^3 - n_1^3 - \ldots - n^3_5$ and the change of variables $\s_j = \tau_j - n_j^3$, $j=1,\ldots,5$, we start by applying Minkowski's inequality and Schur's test to obtain the following
	\begin{align*}
	& \text{LHS of }\eqref{eq_new_quintic_3} \\
	& \les \intt_{\s_1, \ldots, \s_5} \bigg\| \sum_{\mu} \frac{1}{\jb{\tau-n^3 - \bar{\s} + \mu}^{1-\theta}}\bigg( \sum_{\substack{n=n_1+\ldots+n_5,\\\Psi(n, \nbar_{1\ldots5}) = \mu}} \frac{|n_1| \jb{n_1}^{3\theta} }{ \max(\jb{n_3}, \jb{n})^\frac12 }  \prod_{j=1}^5 |\ft{u}_j(\s_j+n_j^3, n_j)| \bigg) \bigg\|_{\l^p_n L^{q_0}_\tau} \\
	& \les \intt_{\s_1, \ldots, \s_5} \bigg\| \sum_{\substack{n=n_1+\ldots+n_5,\\\Psi(n, \nbar_{1\ldots5}) = \mu}} \frac{|n_1| \jb{n_1}^{3\theta} }{ \max(\jb{n_3}, \jb{n})^\frac12}  \prod_{j=1}^5 |\ft{u}_j(\s_j + n_j^3, n_j)| \bigg\|_{\l^p_n \l^{q_1}_\mu},
	\end{align*}
	where $\bar{\s} = \s_1 + \ldots + \s_5$ and $\theta< \frac\dl2$.
	
	Let $f_j(\s,n) = \jb{n}^\frac12 | \ft{\P_{N_j} u}_j(\s  + n^3, n)|$, $j=1,2$, and $f_k(\s,n) = |\ft{\P_{N_k} u}_k(\s + n^3,n)|$, $k=3,4,5$, with $N_1\sim N_2 \gg N_3 \geq N_4 \geq N_5$ dyadic numbers with $N_1 \les N_3^2$. Omitting the time dependence, using H\"older's inequality and the standard divisor counting estimate, we have
	\begin{align*}
	& N_1^{3\theta} N_3^{-\frac12} \bigg\| \sum_{\substack{n=n_1+\ldots+n_5,\\\Psi(n, \nbar_{1\ldots5}) = \mu}}  \prod_{j=1}^5 f_j(n_j) \bigg\|_{\l^p_n \l^{q_1}_\mu} \\
	&  \les N_1^{3\theta + \eps} N_3^{-\frac12}  \bigg\| \sum_{(n_4, n_5)} f_4(n_4) f_5(n_5) \bigg(\sum_{\substack{n_1 + n_2 + n_3 = n - n_4 - n_5 ,\\ n_1^3 + n_2^3 + n_3^3 = n^3 - n_4^3 - n_5^3 - \mu}} \prod_{j=1}^3 |f_j(n_j)|^p \bigg)^{\frac1p} \bigg\|_{\l^p_n \l^{q_1}_\mu} \\
	& \les N_1^{3\theta + \eps} N_3^{-\frac12}  \| f_1\|_{\l^p_n} \| f_2\|_{\l^p_n} \| f_3\|_{\l^p_n} \|f_4\|_{\l^1_n} \|f_5\|_{\l^1_n}.
	\end{align*}
Applying the previous estimate, we obtain
	\begin{align}
	\text{LHS of }\eqref{eq_new_quintic_3} & \les \sum_{N_1, \ldots, N_5} N_1^{3\theta+\eps} N_3^{-\frac12} \bigg(\prod_{j=1}^3 \|f_j\|_{L^1_\s \l^p_n} \bigg) \bigg( \prod_{k=4}^5 \|f_k\|_{L^1_\s \l^1_n}  \bigg) \nonumber\\
	& \les \sum_{N_1, \ldots, N_5} N_1^{3\theta+\eps} N_3^{-1} (N_4 N_5)^{1-\frac1p - \frac12 +} \bigg(\prod_{j=1}^3\|u_j\|_{Z_0} \bigg) \bigg( \prod_{k=4}^5 \| u_k\|_{ Y_0} \bigg). \label{aux_quintic_three}
	\end{align}
	Using the fact that $N_1 \sim N_2 \les N_3^2$, for $\eps, \theta<\frac{\dl}{2}$ and $\dl$ small enough, the estimate follows from summing in the dyadic numbers.

	\vspace{2mm}
	\noi\underline{\textbf{Case 2:} $|n_1|^{\frac12}\gg |n_3|$}
	
	\noi In this case, we need a different approach to control the small power of $N_1$ in the multiplier as well as the $\eps$-loss from using the divisor counting estimate. Note that $\Psi(n, \nbar_{1\ldots5}) = 3(n_1+n_2)(n_1 + n_3 + n_4 + n_5)(n_2 + n_3 + n_4 + n_5) + 3(n_3+n_4)(n_3+n_5)(n_4+n_5)$. Since
	\begin{align*}
	|(n_3+n_4)(n_3+n_5)(n_4+n_5)| \les |n_3|^3 \ll |n_1|^{\frac32},\\
	|(n_1+n_2)(n_1 + n_3 + n_4 + n_5)(n_2 + n_3 + n_4 + n_5)| \ges |n_1|^2,
	\end{align*}
	then $|\Psi(n, \nbar_{1\ldots5})| \ges |n_1|^2$. Following the previous strategy, we have
	\begin{align*}
	& \text{LHS of } \eqref{new_quintic_3} \les \intt_{\s_1, \ldots, \s_5} \bigg\| \sum_{n=n_1+ \ldots +n_5} \frac{|n_1| \jb{n_1}^{3\theta}}{\max(\jb{n}, \jb{n_3})^\frac12} \\
	& \phantom{XXX} \times \frac{1}{ \jb{\tau - n^3 - \bar{\s} + \Psi(n, \nbar_{1\ldots5})}^{1-\theta} \jb{\tau-n^3}^{\dl - \theta}}\prod_{j=1}^5 |\ft{u}_j(\s_j + n_j^3, n_j)| \bigg\|_{\l^p_n L^{q_0}_\tau}
	\end{align*}
	Thus, in order to control the small powers of $\jb{n_1}$, we use the following fact
	\begin{equation*}
	|n_1|^2 \les \jb{\Psi(n, \nbar_{1\ldots5}) } \les \jb{\tau - n^3 - \bar{\s} + \Psi(n, \nbar_{1\ldots5}) } \jb{\tau - n^3} \jb{\s_1} \cdots \jb{\s_5}. \label{quintic3_aux1}
	\end{equation*}
	To gain a power of $\jb{\tau-n^3}$, we impose $\theta \leq \frac\dl2$. For $\jb{\tau-n^3 - \bar{\s}+ \Psi(n, \nbar_{1\ldots5})}$, when applying Schur's estimate, we want to keep $\tfrac\dl4$ of this quantity. Thus, with $1 + \frac{1}{q_0} = \frac{1}{q_1} + \frac1r$, we need 
	$$1-\theta - \frac{\dl}{4}  >  \frac1r = 1 - \frac{\dl}{2} \implies \theta < \frac\dl2.$$
	We can obtain a power of $\jb{\s_k}^{\al}$ for $k=4,5$, given that
	\begin{align*}
	\| \jb{\s}^\al f_k\|_{\l^1_n L^1_\s} \les \| \jb{\s}^{\al + \frac12 - \dl +} f_k \|_{\l^1_n L^{r_0}_\s} \les \| \jb{\s}^{\frac12} f_k \|_{\l^1_n L^{r_0}_\s},
	\end{align*}
	given that $\al + \frac12 - \dl < \frac12 \implies \al < \dl$, thus we can choose $\al = \frac{\dl}{4}$. Similarly, for $\jb{\s_j}^\be$, $j=1,2,3$, we have
	\begin{align*}
	\| \jb{\s}^\be f_j \|_{L^1_\s \l^p_n} \les \| \jb{\s}^{\be + 1 - 4\dl +} f_j \|_{L^{q_0}_\s \l^p_n} \les \| \jb{\s}^{1- 2\dl } f_j \|_{L^{q_0}_\s \l^p_n},
	\end{align*}
	given that $$\be + 1 - 4\dl < 1 - 2\dl \implies \be < 2\dl, $$ so we can choose $\be = \frac{\dl}{4}$, as well.
	Combining all of these powers, we get $\jb{\Psi(n, \nbar_{1\ldots5})}^{-\frac{\dl}{4}}\les N_1^{-\frac{\dl}{2}}$ which we use in \eqref{aux_quintic_three} instead of the condition $N_1\les N_3^2$.

\end{proof}

\begin{proof}[Proof of Proposition~\ref{prop:quintic_good}]
	We have the following estimate
	\begin{align*}
	&\| \mathcal{Q}'(u_1, \ldots, u_5) \|_{Z_1} \\
	& \les \bigg\| \sum_{n=n_1+n_2+n_0} \sum_{n_0=n_3+n_4+n_5} \intt_{\R^5} \frac{\max_j \jb{n_j}^{9\theta} \jb{n}^\frac12}{\jb{n_{\max} } \jb{n'_{\max}} \jb{\tau-n^3}^{\theta + \dl}} \prod_{j=1}^5 | \ft{u}_j(\tau_j, n_j) | \ d\tau_1 \cdots d\tau_5 \bigg\|_{\l^p_n L^{q_0}_\tau} \\
	& \les \bigg\| \sum_{n=n_1+n_2+n_0} \sum_{n_0=n_3+n_4+n_5}  \frac{\max_j \jb{n_j}^{9\theta} \jb{n}^\frac12}{\jb{n_{\max} } \jb{n'_{\max}}} \prod_{j=1}^5 \| \ft{u}_j( n_j) \|_{L^1_\tau} \bigg\|_{\l^p_n }, 
	\end{align*}
	given that $\theta + \dl > 4\dl \implies \theta > 3 \dl$. Let $f_j(\tau,n) = \jb{n}^\frac12 | \ft{\P_{N_j} u} (\tau,n)|$, for dyadic numbers $N_j$, $j=1, \ldots, 5$. By symmetry, we can assume without loss of generality that $N_1 \geq N_2$, $N_3 \geq N_4 \geq N_5$. We will consider two cases: $|n_1| \geq |n_3|$ or $|n_1| < |n_3|$. Since the strategies follow a similar approach, we will only focus on the former.
		
	\smallskip
	Assume that $|n_1| \geq |n_3|$.
	Using Young's and H\"older's inequality, we get
	\begin{align*}
	\|\mathcal{Q}' (u_1, \ldots, u_5)\|_{Z_1} & \les \sum_{N_1, \ldots, N_5} N_1^{9\theta - 1} (N_2 N_4 N_5)^{-\frac12 } N_3^{-\frac32} \bigg\| \sum_{n=n_1+ \ldots + n_5} \|f_j(n_j)\|_{L^1_\tau} \bigg\|_{\l^p_n} \\
	& \les \sum_{N_1, \ldots, N_5} N_1^{9\theta - 1} (N_2 N_4 N_5)^{-\frac12 } N_3^{-\frac32} \|f_1\|_{\l^p_n L^1_\tau} \prod_{j=2}^5 \|f_j\|_{\l^1_n L^1_\tau}\\
	& \les \sum_{N_1, \ldots, N_5} N_1^{9\theta - 1} (N_2 N_4 N_5)^{\frac12 - \frac1p + } N_3^{-\frac12 - \frac1p +}  \prod_{j=1}^5 \|u_j\|_{Y_0}.
	\end{align*}
	It only remains to sum in the dyadics
	\begin{align*}
	\sum_{N_1, \ldots, N_5} N_1^{9\theta - 1} (N_2 N_4 N_5)^{\frac12 - \frac1p + } N_3^{-\frac12 - \frac1p +} & \les \sum_{N_1, \ldots, N_5} (N_1 N_2)^{-\theta} (N_3N_4N_5)^{\frac16 - \frac1p + (-\frac12 - \frac1p + 11\theta )/3 +} 
	\end{align*}
	and the estimate follows if 
	$$\frac16 - \frac1p - \frac16 - \frac{1}{3p} + \frac{11}{3}\theta < 0 \implies 3\dl < \theta < \frac{4}{11p}. $$

\end{proof}

\appendix

	\section{Equation for $w$}\label{app:w}
	Here we present the equation for $w$ after using second iteration once, following the strategy described in Section~\ref{sec:equations}. 
	\begin{align}
	w & = \varphi(t) S(t) u_0 + \varphi_T\cdot \mathcal{IR}(u, u, u) \nonumber\\
	& + \varphi_T\cdot (\D  \mathcal{NR}_{C,\geq} + \D  \mathcal{NR}_{D,\geq})(w,\conj{w},w) + \varphi_T\cdot (\D  \mathcal{NR}_{C,>} + \D  \mathcal{NR}_{D,>})(w,w,\conj{w}) \nonumber\\
	& + \varphi_T \big( \B^{ 0}_{A,\geq} (w, \conj{u},u) + \B^{ 1}_{A,\geq}(w, \conj{u},u)  + \B^{ 2}_{A,\geq} (w, \conj{w},u)  + \B^{ 3}_{A,\geq}(w, \conj{u},w)  \big) \nonumber\\
	& + \varphi_T \big( \B^{ 0}_{A,>} (w, u,\conj{u}) + \B^{ 1}_{A,>}(w, u,\conj{u})  + \B^{ 2}_{A,>} (w, w,\conj{u})  + \B^{ 3}_{A,>}(w, u,\conj{w})  \big) \nonumber\\
	& +   \varphi_T \big( \B^{ 0}_{B,\geq} (w, \conj{w},u) + \B^{ 1}_{B,\geq} (w, \conj{w},u) + \B^{ 2}_{B, \geq} (w, \conj{w},u) + \B^{ 3}_{B, \geq} (w, \conj{w},w) \big) \nonumber\\
	& +   \varphi_T \big( \B^{ 0}_{B,>} (w, w,\conj{u}) + \B^{ 1}_{B,>} (w, w,\conj{u}) + \B^{ 2}_{B, >} (w, w,\conj{u}) + \B^{ 3}_{B,>} (w, w,\conj{w}) \big) \nonumber\\
	&+   \varphi_T(\D  \mathcal{NR}_{A, \geq} + \D  \mathcal{NR}_{B, \geq}+ \D  \mathcal{NR}_{C, \geq} + \D  \mathcal{NR}_{D, \geq})\big(\varphi_T\cdot \G_{A, \geq}[w,\conj{u}, u],\conj{u}, u\big) \nonumber\\
	&+   \varphi_T(\D  \mathcal{NR}_{A, \geq} + \D  \mathcal{NR}_{B, \geq}+ \D  \mathcal{NR}_{C, \geq} + \D  \mathcal{NR}_{D, \geq})\big(\varphi_T\cdot \G_{A, >}[w,u,\conj{u}],\conj{u}, u\big) \nonumber\\
	&+   \varphi_T(\D  \mathcal{NR}_{A, \geq} + \D  \mathcal{NR}_{B, \geq}+ \D  \mathcal{NR}_{C, \geq} + \D  \mathcal{NR}_{D, \geq})\big(\varphi_T\cdot \G_{B, \geq}[w,\conj{w},u],\conj{u}, u\big) \nonumber\\
	&+   \varphi_T(\D  \mathcal{NR}_{A, \geq} + \D  \mathcal{NR}_{B, \geq}+ \D  \mathcal{NR}_{C, \geq} + \D  \mathcal{NR}_{D, \geq})\big(\varphi_T\cdot \G_{B, >}[w,w,\conj{u}],\conj{u}, u\big) \nonumber\\
	&+   \varphi_T(\D  \mathcal{NR}_{A, >} + \D  \mathcal{NR}_{B, >}+ \D  \mathcal{NR}_{C, >} + \D  \mathcal{NR}_{D,>})\big(\varphi_T\cdot \G_{A, \geq}[w,\conj{u}, u],u,\conj{u}\big) \nonumber\\
	&+   \varphi_T(\D  \mathcal{NR}_{A, >} + \D  \mathcal{NR}_{B,>}+ \D  \mathcal{NR}_{C, >} + \D  \mathcal{NR}_{D, >})\big(\varphi_T\cdot \G_{A, >}[w,u,\conj{u}],u,\conj{u}\big) \nonumber\\
	&+   \varphi_T(\D  \mathcal{NR}_{A, >} + \D  \mathcal{NR}_{B, >}+ \D  \mathcal{NR}_{C, >} + \D  \mathcal{NR}_{D, >})\big(\varphi_T\cdot \G_{B, \geq}[w,\conj{w},u],u,\conj{u}\big) \nonumber\\
	&+   \varphi_T(\D  \mathcal{NR}_{A, >} + \D  \mathcal{NR}_{B, >}+ \D  \mathcal{NR}_{C,>} + \D  \mathcal{NR}_{D, >})\big(\varphi_T\cdot \G_{B, >}[w,w,\conj{u}],u,\conj{u}\big) \nonumber\\
	& +   \varphi_T(\D  \mathcal{NR}_{B, \geq}+\D  \mathcal{NR}_{C, \geq} + \D  \mathcal{NR}_{D, \geq})\big(w, \conj{\varphi_T\cdot \G_{A, \geq}[w,\conj{u}, u] },  u\big) \nonumber\\
	& +   \varphi_T(\D  \mathcal{NR}_{B, \geq}+\D  \mathcal{NR}_{C, \geq} + \D  \mathcal{NR}_{D, \geq})\big(w, \conj{\varphi_T\cdot \G_{A, >}[w,u, \conj{u}] },  u\big) \nonumber\\
	& +   \varphi_T(\D  \mathcal{NR}_{B, \geq}+\D  \mathcal{NR}_{C, \geq} + \D  \mathcal{NR}_{D, \geq})\big(w, \conj{\varphi_T\cdot \G_{B, \geq}[w,\conj{w},u] },  u\big) \nonumber\\
	& +   \varphi_T(\D  \mathcal{NR}_{B, \geq}+\D  \mathcal{NR}_{C, \geq} + \D  \mathcal{NR}_{D, \geq})\big(w, \conj{\varphi_T\cdot \G_{B, >}[w,w, \conj{u}] },  u\big) \nonumber\\
	& +   \varphi_T(\D  \mathcal{NR}_{B, >}+\D  \mathcal{NR}_{C, >} + \D  \mathcal{NR}_{D,>})\big(w, \varphi_T\cdot \G_{A, \geq}[w,\conj{u}, u] ,  \conj{u}\big) \nonumber\\
	& +   \varphi_T(\D  \mathcal{NR}_{B, >}+\D  \mathcal{NR}_{C, >} + \D  \mathcal{NR}_{D, >})\big(w, \varphi_T\cdot \G_{A, >}[w,u, \conj{u}] ,  \conj{u}\big) \nonumber\\
	& +   \varphi_T(\D  \mathcal{NR}_{B, >}+\D  \mathcal{NR}_{C, >} + \D  \mathcal{NR}_{D, >})\big(w, \varphi_T\cdot \G_{B, \geq}[w,\conj{w},u] , \conj{u}\big) \nonumber\\
	& +   \varphi_T(\D  \mathcal{NR}_{B, >}+\D  \mathcal{NR}_{C, >} + \D  \mathcal{NR}_{D, >})\big(w, \varphi_T\cdot \G_{B, >}[w,w, \conj{u}] ,  \conj{u}\big) \nonumber\\
	& +   \varphi_T(\D  \mathcal{NR}_{C,\geq} + \D  \mathcal{NR}_{D, \geq})\big(w, \conj{w},\varphi_T\cdot \G_{A, \geq}[w,\conj{u}, u]\big) \nonumber\\
	& +   \varphi_T(\D  \mathcal{NR}_{C,\geq} + \D  \mathcal{NR}_{D, \geq})\big(w, \conj{w},\varphi_T\cdot \G_{A, >}[w,u,\conj{u}]\big) \nonumber\\
	& +   \varphi_T(\D  \mathcal{NR}_{C,\geq} + \D  \mathcal{NR}_{D, \geq})\big(w, \conj{w},\varphi_T\cdot \G_{B, \geq}[w,\conj{w}, u]\big) \nonumber\\
	& +   \varphi_T(\D  \mathcal{NR}_{C,\geq} + \D  \mathcal{NR}_{D, \geq})\big(w, \conj{w},\varphi_T\cdot \G_{B,>}[w,w,\conj{u}]\big) \nonumber\\
	& +   \varphi_T(\D  \mathcal{NR}_{C,>} + \D  \mathcal{NR}_{D, >})\big(w,w, \conj{\varphi_T\cdot \G_{A, \geq}[w,\conj{u}, u]} \big) \nonumber\\
	& +   \varphi_T(\D  \mathcal{NR}_{C,>} + \D  \mathcal{NR}_{D, >})\big(w, w,\conj{\varphi_T\cdot \G_{A, >}[w,u,\conj{u}] }\big) \nonumber\\
	& +   \varphi_T(\D  \mathcal{NR}_{C,>} + \D  \mathcal{NR}_{D, >})\big(w, w, \conj{\varphi_T\cdot \G_{B, \geq}[w,\conj{w}, u]} \big) \nonumber\\
	& +   \varphi_T(\D  \mathcal{NR}_{C,>} + \D  \mathcal{NR}_{D,>})\big(w, w, \conj{\varphi_T\cdot \G_{B,>}[w,w,\conj{u}] }\big) \nonumber\\
	&+   \varphi_T \Big( \B^{ 2}_{A, \geq} \big(w, \conj{\varphi_T \cdot \G_{A, \geq}[w,\conj{u}, u]}, u \big) + \B^{ 2}_{A, \geq} \big(w, \conj{\varphi_T \cdot \G_{A, >}[w,u,\conj{u}]}, u \big) \Big) \nonumber\\
	&+\varphi_T \Big( \B^{ 2}_{A, \geq} \big(w, \conj{\varphi_T \cdot \G_{B, \geq}[w,\conj{w}, u]}, u \big) + \B^{ 2}_{A, \geq} \big(w, \conj{\varphi_T \cdot \G_{B,>}[w,w,\conj{u}]}, u \big)  \Big) \nonumber\\
	&+   \varphi_T \Big( \B^{ 2}_{A, >} \big(w, \varphi_T \cdot \G_{A, \geq}[w,\conj{u}, u], \conj{ u} \big) + \B^{ 2}_{A, >} \big(w, \varphi_T \cdot \G_{A, >}[w,u,\conj{u}], \conj{u} \big) \Big) \nonumber\\
	&+\varphi_T \Big( \B^{ 2}_{A, >} \big(w, \varphi_T \cdot \G_{B, \geq}[w,\conj{w}, u], \conj{u} \big) + \B^{ 2}_{A, >} \big(w, \varphi_T \cdot \G_{B,>}[w,w,\conj{u}], \conj{u} \big)  \Big) \nonumber\\
	& +   \varphi_T \Big( \B^{ 3}_{A, \geq} \big(w, \conj{u}, \varphi_T\cdot \G_{A, \geq}[w, \conj{u}, u] \big) + \B^{ 3}_{A, \geq} \big(w, \conj{u}, \varphi_T\cdot \G_{A, >}[w,u, \conj{u}] \big) \Big) \nonumber\\
	& + \varphi_T \Big( \B^{ 3}_{A, \geq} \big(w, \conj{u}, \varphi_T\cdot \G_{B, \geq}[w,\conj{w}, u] \big) + \B^{ 3}_{A, \geq} \big(w, \conj{u}, \varphi_T\cdot \G_{B, >}[w,w, \conj{u}] \big)  \Big) \nonumber\\
	& +   \varphi_T \Big( \B^{ 3}_{A, >} \big(w, u, \conj{\varphi_T\cdot \G_{A, \geq}[w, \conj{u}, u]} \big) + \B^{ 3}_{A, >} \big(w, u, \conj{\varphi_T\cdot \G_{A, >}[w,u, \conj{u}]} \big) \Big) \nonumber\\
	& + \varphi_T \Big( \B^{ 3}_{A, >} \big(w, u, \conj{\varphi_T\cdot \G_{B, \geq}[w,\conj{w}, u]} \big) + \B^{ 3}_{A,>} \big(w, u, \conj{\varphi_T\cdot \G_{B, >}[w,w, \conj{u}]} \big)  \Big) \nonumber\\
	& +   \varphi_T\Big( \B^{ 3}_{B, \geq} \big(w,\conj{w}, \varphi_T\cdot \G_{A, \geq}[w, \conj{u}, u] \big) + \B^{ 3}_{B, \geq} \big(w,\conj{w}, \varphi_T\cdot \G_{A, >}[w, u, \conj{u}] \big) \Big) \nonumber\\
	& +\varphi_T \Big(\B^{ 3}_{B, \geq} \big(w,\conj{w}, \varphi_T\cdot \G_{B, \geq}[w, \conj{w}, u] \big) + \B^{ 3}_{B, \geq} \big(w,\conj{w}, \varphi_T\cdot \G_{B, >}[w, w,\conj{u}] \big) \Big) \nonumber\\
	& +   \varphi_T\Big( \B^{ 3}_{B, >} \big(w,w,\conj{ \varphi_T\cdot \G_{A, \geq}[w, \conj{u}, u] } \big) + \B^{ 3}_{B, >} \big(w,w, \conj{ \varphi_T\cdot \G_{A, >}[w, u, \conj{u}]} \big) \Big)\nonumber \\
	& +\varphi_T \Big(\B^{ 3}_{B, >} \big(w,w,\conj{ \varphi_T\cdot \G_{B, \geq}[w, \conj{w}, u]} \big) + \B^{ 3}_{B, >} \big(w,w,\conj{ \varphi_T\cdot \G_{B, >}[w, w,\conj{u}]} \big) \Big) . \label{eq:w}
	\end{align}

	
	\begin{ack}\rm 
		The author would like to thank her advisor, Tadahiro Oh, for suggesting the problem and for his continuous support throughout the work. A.C.~acknowledges support from Tadahiro Oh’s ERC Starting Grant (no.
		637995 ProbDynDispEq) and the Maxwell Institute Graduate School in Analysis and its Applications, a Centre for Doctoral Training funded by the UK Engineering and Physical Sciences Research Council (grant EP/L016508/01), the Scottish Funding Council, Heriot-Watt University and the University of Edinburgh. The author would like to thank Younes Zine for careful proofreading, and Yuzhao Wang and Justin Forlano for helpful and interesting discussions.
		
	\end{ack}

\end{document}